\documentclass[final,20pt]{elsarticle}

\usepackage{amssymb}
\usepackage{algorithmic}
\usepackage{algorithm}
\usepackage[all,pdf]{xy}
\usepackage{amsfonts}
\usepackage{lscape}
\usepackage{color}
\usepackage{bbm}
\usepackage{amsfonts,amssymb,mathrsfs,amsmath,amssymb,theorem,float}
\usepackage{graphicx}
\usepackage{lineno}
\usepackage{hyperref}
\hypersetup{hypertex=ture,
colorlinks=true,
linkcolor=blue,
anchorcolor=blue,
citecolor=blue}

\newtheorem{theorem}{Theorem}[section]

\newtheorem{lemma}[theorem]{Lemma}

\newtheorem{definition}[theorem]{Definition}
\newtheorem{example}[theorem]{Example}

\def\N{{\mathbb N}}
\def\Z{{\mathbb Z}}

\def\X{{\mathbb{X}}}
\def\F{{\mathbb{F}}}
\def\H{{\mathbb{H}}}

\def\inp{{\rm{in}}}
\def\out{{\rm{o}}}

\def\res{\hbox{\rm{res}}}
\def\LC{\hbox{\rm{LC}}}

\def\MoCont{\hbox{\rm{MoCont}}}
\def\MoPrim{\hbox{\rm{MoPrim}}}
\def\Cont{\hbox{\rm{Cont}}}
\def\Prim{\hbox{\rm{Prim}}}

\def\qed{\hfil {\vrule height5pt width2pt depth2pt}}

\journal{Journal of XXXX}

\begin{document}

\begin{frontmatter}



\title{A New Sparse Algorithm for Polynomial GCD over Integers}

\author[firstaddress]{Qiao-Long Huang\corref{mycorrespondingauthor}}
\cortext[mycorrespondingauthor]{Corresponding author}
\ead{huangqiaolong@sdu.edu.cn}

\author[secondaddress]{Michael Monagan}
\ead{mmonagan@sfu.ca}

\address[firstaddress]{School of Mathematics, Shandong University, Jinan, {\rm250100}, China}
\address[secondaddress]{Department of Mathematics, Simon Fraser University, Burnaby, British Columbia, V5A 1S6, Canada}



\begin{abstract}
We describe a new greatest common divisor (GCD) algorithm for polynomials with integer coefficients.
The bit complexity of the new algorithm is polynomial in the input and
output sizes and the individual degree bounds.
%
 Our algorithm reduces a multivariate polynomial GCD to a single univariate polynomial GCD.
The main idea of our algorithm is a new variable
substitution which reduces a multivariate polynomial to
a separated one, that is, the coefficients in a main variable are all monomials. The explicit bit complexity is analyzed and we have implemented our algorithm in Maple. It is shown that our algorithm is
efficient for polynomials with high degree, large number of variables, but small number of terms in GCD.
\end{abstract}



\begin{keyword}
Sparse multivariate polynomial\sep integer coefficient polynomial \sep  greatest common divisor

\MSC 11Y16 \sep  11C08 \sep 68W30

\end{keyword}

\end{frontmatter}


\section{Introduction}
Multivariate polynomial GCD computation is one of the central problems in algebraic
and symbolic computation.
It plays an important role as a subroutine in many
other problems, for example, the solution of polynomial equations,
symbolic integration, simplification, polynomial factorization \cite{HuangG23,ChenM23,MonaganTuncerT20,MonaganPaluck22}, secret sharing \cite{CafaroP18,VijayakumarBK13}, cryptography \cite{CoppersmithFPR96} and so on.

In 1967, Collins \cite{Collins67} gave the first modern GCD algorithm by refining the Euclidean algorithm.
To get a fraction free $Euclidean$ $polynomial$ 
$remainder$ $sequence$ (PRS), pseudo-division is used during the computation. This leads to the so-called intermediate expression swell phenomenon, which makes Collins's algorithm lack scalability.

In 1971, Brown \cite{brown1971euclid} gave a more efficient  algorithm based on interpolation for dense polynomials.
Brown's algorithm solves the intermediate expression swell problem by introducing the modular method,
that is, by substituting all but one of the variables by certain integers,
multivariate polynomial GCD computation becomes univariate GCD computation,
and the true GCD will be recovered from these univariate GCDs either by
interpolations or by the Chinese Remainder Theorem.
Brown's algorithm works effectively for dense multivariate polynomial GCD computation. For the sparse case, the Extended Zassenhaus algorithm (EZ-GCD) is better suited. Moses and Yun \cite{MosesY73} proposed this algorithm in 1973, using Hensel lifting instead of interpolation to recover the GCD.
%
%
%
In 1980, Wang \cite{Wang80b} proposed an enhanced EZ-GCD algorithm, called the EEZ-GCD algorithm, which improved the EZ-GCD algorithm by solving the leading coefficient problem, bad-zero problem, unlucky evaluation problem, and the common divisor problem.

In 1979, Zippel \cite{Zippel79} developed the first modular sparse GCD algorithm based on sparse polynomial interpolation, which interpolates the GCD one variable at a time.
Zippel's algorithm is probabilistic, and its correctness relies on the Schwartz-Zippel \cite{ZIPPEL1990375} lemma. Zippel's algorithm is currently used as the main GCD algorithm in the computer algebra systems Fermat, Magma, Maple and Mathematica.

For the interested reader, \cite{Ben-OrT88,huang2021sparse2,arnold2016sparse,KaltofenLL00,javadi2010parallel,cuyt2008new,DBLP:conf/issac/Huang19,DBLP:journals/jsc/HuangG20,DBLP:journals/jsc/Huang23} provide more descriptions of sparse polynomial interpolation.

In 1988, Kaltofen \cite{kaltofen1988greatest} gave a GCD algorithm for polynomials given by straight-line programs.
This algorithm outputs the GCD in the form of straight-line program.
In 1990, Kaltofen  and Trager \cite{kaltofen1990computing} gave a GCD algorithm for polynomials given by black boxes.

In 1992, Sasaki and Suzuki \cite{SasakiS92} introduced the method of truncated power series, to reduce the intermediate expression swell of the Euclidean algorithm. This algorithm is efficient for calculating low-degree GCD of high-degree non-sparse polynomials.


In 2008, Cuyt and Lee \cite{cuyt2008new} proposed another improved technique.
Their algorithm is designed for the interpolation of a rational function, but a direct application is a GCD algorithm. 

In 2018, Tang, Li and Zeng \cite{tang2018computing} proposed
two GCD algorithms based on variations of Zippel's method and Ben-Or/Tiwari's interpolation algorithm \cite{Ben-OrT88}, respectively.

In 2021, Hu and Monagan \cite{hu2021fast} presented a parallel GCD algorithm for sparse GCD computation, which combined a Kronecker substitution with a Ben-Or/Tiwari \cite{Ben-OrT88} sparse interpolation modulo a smooth prime to determine the support of the GCD.

Although the current GCD algorithms used in software are effective in most cases, after all these years of development, explicit bit complexities for sparse multivariate polynomial GCD algorithms over integers seem not to be given.  One reason for this is that existing algorithms may construct an intermediate polynomial which is much larger than the input polynomials and their gcd. In previous work, the bit complexities were either mentioned to be polynomial in the number of variables, degrees, and the number of terms of the input and output polynomials~\cite{kaltofen1988greatest,kaltofen1990computing,tang2018computing,hu2021fast}
or given under certain conditions \cite{MosesY73,Wang80b,Zippel79}.
In this paper, we will give a new GCD algorithm and its exact bit complexity,
which is sensitive to sparse representations.
The given algorithm is shown to have better complexity than existing algorithms in many cases.
In practical computations, our algorithm performs better than the algorithm in Maple when the number of terms in GCD is small.

\subsection{Main results}
Let $\Z$ be the integer domain.
In this paper, we focus on GCD computation of polynomials over integers. Let $A$ and $B$ be two polynomials in $\Z[x_1,\dots,x_n]$ and $G=\gcd(A,B)$.  Let $D$ be a total degree bound for $A$ and $B$, that is, $D \ge \max(\deg(A),\deg(B))$.
 %
Let $\|A\|_{\infty}$ denote the largest integer coefficient of $A$ in magnitude.
The algorithm is randomized, so we assume that we can obtain a random bit with bit-cost $O(1)$.
In the following, all complexity analysis relies on the ``Soft-Oh" notation $O^\sim(\phi)=O(\phi\cdot \mathbf{polylog}(\phi))$, where $\mathbf{polylog}$ means $\log^c$ for some fixed $c>0$.

The main result of the paper is given below.
\begin{theorem}
Let $A,B$ be polynomials  in $\Z[x_1,\dots,x_n]$ with partial degree bound $d=\max_{i=1}^n \max(\deg_{x_i}A,\deg_{x_i}B)$ and $0<\varepsilon<1$.
Then there exists a randomized algorithm that takes as inputs $A,B$ and
returns $G=\gcd(A,B)$ with probability $\geq 1-\varepsilon$, using expected $O^\sim(n^2d^2T_{\out}^2\log^2 \frac{1}{\varepsilon}\log C+n^2dT_{\inp}\log^2 T_{\out}\log^2\frac{1}{\varepsilon}\log C)$ bit operations, where $T_{\inp}=\#A+\#B,T_{\out}=\#G$ and $\max\{\|A\|_{\infty},\|B\|_{\infty}\}\leq C$.
\end{theorem}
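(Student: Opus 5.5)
The plan follows the abstract: a randomized variable substitution turns $A$ and $B$ into polynomials that are \emph{separated} in a main variable. After it, each coefficient of the gcd in the main variable is a single monomial, so the multivariate gcd can be read off from one univariate gcd computation, with the other variables recovered from exponents. I would carry this out in four stages, each with failure probability at most $\varepsilon/4$ and repeated $O(\log\frac1\varepsilon)$ times to amplify.

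\textbf{Content, primitive part, and shifting.} First pick a main variable, say $x_1$, and handle contents. Then apply a random shift together with a Kronecker-type substitution. Concretely, I would substitute $x_i \mapsto x_i\, y^{e_i}$ for $i=2,\dots,n$, with the $e_i$ random in a range of size $O(d\,T_{\out}\log\frac1\varepsilon)$. This makes the exponents of $y$ attached to distinct monomials of $G$ pairwise distinct with high probability.

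The key claim is then the following: viewed as a polynomial in a single new variable, with coefficients that are monomials in $x_2,\dots,x_n$, the image of $G$ is separated. Each $y$-degree corresponds to exactly one term of $G$. The probability bound comes from a union bound over the at most $T_{\out}^2$ pairs of exponent vectors $u\neq v$ of $G$. For such a pair, $\sum_i e_i(u_i-v_i)=0$ holds with probability at most $1/(\text{range})$, since some $u_i-v_i\neq 0$ and we may condition on the other $e_j$.

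\textbf{Reduction to a univariate gcd.} Next, specialize $x_2,\dots,x_n$ to chosen values. I would use distinct small primes $p_2,\dots,p_n$, or evaluate modulo a large prime $q$ of $O(\log C + n\log d)$ bits. This gives univariate (bivariate in $x_1,y$, then Kronecker-flattened) images $a,b$, and I compute $g=\gcd(a,b)$ by a fast modular univariate gcd.

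Because $G$ is separated, each coefficient of $g$ equals $c\cdot\prod p_i^{u_i}$ for a single term $c\,x^u$ of $G$. So $u$ and $c$ are recovered by factoring over the known primes, or by using $n$ extra evaluations with one variable perturbed at a time, which gives the $n$ factor. Unlucky evaluations, where the gcd of the images is larger than the image of the gcd, are bounded via a resultant/Schwartz--Zippel argument. The resultant has degree $O(nd\cdot\text{range})$, which is why the range above must be of size $\mathrm{poly}(d,T_{\out})$ and not exponential. Normalizing by leading coefficients and a final trial division check, done probabilistically by evaluation, certifies the result.

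\textbf{Cost accounting.} The degree of the univariate images is $O(d\cdot n\,d\,T_{\out}^2)$ in the worst case, since the substitution inflates degrees by the range of the $e_i$. Their coefficients have $O(n\log d\cdot\text{degree}+\log C)$ bits. A careful choice (working modulo primes and recovering exponents via discrete logs in a smooth prime field) keeps the coefficient size at $O^\sim(\log C)$ per prime, with $O^\sim(n)$ primes or evaluations. The univariate gcd on dense images of degree $O^\sim(n d^2 T_{\out}^2)$ yields the first term after summing over $n$ recovery passes and $\log^2\frac1\varepsilon$ amplifications. Computing the images of $A$ and $B$ costs $O^\sim(n\,d\,T_{\inp}\log^2 T_{\out})$ per pass (exponent arithmetic on $T_{\inp}$ terms with $\log$ of the range), giving the second term.

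\textbf{Main obstacle.} The hardest step is simultaneously controlling unlucky substitutions and keeping the degree of the univariate images only quadratic in $d\,T_{\out}$. The substitution must separate the terms of $G$, yet the gcd of the substituted $A$ and $B$ may acquire spurious factors. I would first try to show that the substitution $x_i\mapsto x_i y^{e_i}$ is a ring monomorphism on Laurent polynomials. If so, it preserves gcds up to monomial factors (removed by making things primitive in $y$), so unluckiness arises only at the evaluation stage, where a resultant degree bound suffices.
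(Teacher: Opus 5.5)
Your pipeline matches the paper's: the separating substitution $x_i\mapsto x_iy^{s_i}$, evaluation at distinct small primes, one gcd in $\Z[y]$, exponents read off by trial division, and a resultant-degree Schwartz--Zippel bound for unlucky points. As written, though, it does not reach an output-sensitive \emph{expected} bound. The separating range must be $\Theta(T^2)$ for a term bound $T\ge T_{\out}$. Your own union bound over $\binom{T_{\out}}{2}$ pairs, each colliding with probability $1/R$, needs $R\gtrsim T_{\out}^2$, so the range $O(dT_{\out}\log\frac1\varepsilon)$ you state does not guarantee separation. The paper takes $\mathbf s\in[0,\frac92T(T-1)]^n$, gets success $\ge 8/9$ from Schwartz--Zippel on $\prod_{i<j}(S_i-S_j)$, and amplifies by repetition. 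More seriously, $T_{\out}$ is not known, and the only a priori bound, $(d+1)^n$, makes the degree and the cost exponential in $n$. The missing idea is the doubling loop: run the basic algorithm for $T=2,4,8,\dots$, $m=O(\log\frac1\varepsilon)$ times for each $T$, and test every candidate with a verifier of tolerance $\eta=\varepsilon'/(2\kappa m)$. The expected cost is then bounded by $\sum_{\ell\ge L}(\varepsilon'/2)^{\ell-L}(2^{\ell})^{\Lambda}=O(T_{\out}^{\Lambda})$ for $\Lambda=1,2$. This is where the word ``expected'' and the factor $\log^2\frac1\varepsilon$ come from.

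That loop needs a verifier that wrong candidates cannot pass, and a divisibility check is not one. While $T<T_{\out}$ the substitution often fails to separate, and the output can then be a proper divisor of $G$ that divides both inputs. The paper's example: for $A=B=(5x_1-3x_2+1)(2x_1x_2+7)$, $\mathbf s=(1,1)$, $\mathbf p=(3,5)$, the first factor maps to $1$ and $2x_1x_2+7$ is returned. The paper therefore also computes each $\deg_{x_i}G$ from univariate gcds of random specializations modulo a random prime, with bad choices controlled by $\Gamma_i=\res_{x_i}(A/G,B/G)\LC_{x_i}(A)\LC_{x_i}(B)$. It then uses that $H\mid A$, $H\mid B$, $\deg_{x_i}H=\deg_{x_i}G$ and primitivity together force $H=G$.

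Two further points. First, the $p_i$ must not divide any coefficient of $G$; otherwise trial division misreads exponents and $\Prim$ of the univariate gcd is not the image of $G$. The paper ensures this by drawing them from roughly the $n^2T(d+\log C)$ smallest primes, using $\|G\|_\infty\le e^{nd}C$. Second, keeping $x_1$ out of the substitution and Kronecker-flattening $(x_1,y)$ inflates the degree by a factor $d$: you get $O(nd^2T^2)$ instead of $\|\mathbf s\|_\infty D=O(ndT^2)$. The image coefficients have up to $\Theta(D\log P+\log C)$ bits, so this can overshoot the first term by up to a factor $d$. The smooth-prime/discrete-log remark describes a different algorithm that you have not analyzed. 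Substituting all $n$ variables and Hensel-lifting from one small prime, as the paper does, avoids both problems.
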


The algorithm is implemented in Maple and extensive numerical experiments
show that the new algorithm outperforms the default GCD algorithm in Maple in many cases,
especially when measured with the numbers of variables and total degrees. But as the number of terms in GCD increases, it becomes much slower than the Maple algorithm.
Details of the experiments can be found in section \ref{sec-exp}.

At  a high level, the algorithm is a combination of the modular method and a randomized Kronecker substitution.   It reduces a multivariate GCD computation to a single univariate GCD computation in $\Z[y]$.
The main ingredients of the algorithm include:
a new variable substitution is introduced to isolate all the terms of the GCD, that is, the coefficients of substituted GCD are monomials;
a new type of evaluation point is introduced to recover the GCD from its modular images
by using only small primes; a randomized testing algorithm is proposed to test if a polynomial is the real GCD.
Combination of these ingredients leads to the lower bit complexity
and the practical efficiency of the algorithm in sparse case.

\subsection{Related work and comparison}

In Table \ref{tab-1}, we list the complexities for the GCD algorithms which have explicit analysis in the literature.
In the table, ``Total Cost" refers to the number of bit operations.
%
%
Compared with \cite{BrownT71} and \cite{Collins67}, if $A,B$ and $G$ are sparse, our method has better ``Soft-Oh" complexity. However, for the dense case, $T_{\inp}$ and $T_{\out}$ are $O(d^n)$, our complexity has no advantage.

\begin{table}[H]
\footnotesize\tabcolsep 6pt
\begin{center}
\caption{A ``soft-Oh" comparison  for GCD algorithms over $\Z[x_1,\dots,x_n]$}\label{tab-1}\vspace{-2mm}
\end{center}
\begin{center}
\setlength{\tabcolsep}{1.5mm}{
\begin{tabular}{c|c|c}
Algorithms &Total Cost & Type\\ \cline{1-3}
Brown \cite{BrownT71}& $d^n\log^2C +d^{n+1}\log C$  & Las Vegas\\
Collins \cite[analyzed in \cite{BrownT71}]{Collins67} &$d^{4n}2^{2n^2}3^n\log^2 C$&  Deterministic\\
This paper& $n^2d^2T_{\out}^2\log^2 \frac{1}{\varepsilon}\log C+n^2dT_{\inp}\log^2 T_{\out}\log^2\frac{1}{\varepsilon}\log C$ &Monte Carlo\\
\end{tabular}}
\end{center}
\end{table}

 Moses and Yun \cite{MosesY73} gave the EZ (Extended Zassenhaus) GCD algorithm, which is a direct extension of Hensel's lemma. This algorithm
seems to have a computational bound. In most cases, it is a polynomial function of $T_{\inp},d$ and $n$.
However, some conditions may increase the complexity. For example, the number of terms in the expanded series form
of $A(x_1,x_2-a_2,\dots,x_n-a_n)$ have larger order than that of in $A(x_1,x_2,\dots,x_n)$ for some $(a_2,\dots,a_n)$; or the polynomial is not monic with respect to any variable; or the GCD has a different common divisor with each of the cofactors.
Our algorithm works for any polynomials and has no extra assumption in the input.

 Zippel \cite{Zippel79} introduced sparse interpolation into Brown's interpolation scheme. His technique preserves sparsity of the multivariate GCD during the computation. It should be noted that Zippel's algorithm is not polynomial time in the sparse representation. This is because
his algorithm computes the content and the primitive part of the GCD separately.
However, some sparse polynomials have dense primitive
parts (cf. \cite[Section 5]{GathenK85}).

In order to avoid the content computation, Kaltofen  \cite{kaltofen1988greatest} reduced multivariate polynomials to monic bivariate polynomials, and obtained the first polynomial time algorithm in the sparse representation and the partial
degree bound.
The degree of the reduced bivariate polynomial is $O(D)$, so the number of terms is $O(D^2)$.
Since the height of coefficients is $O(nd\log C)$, it needs $O(ndD^2\log C)$ bits to represent this polynomial. 
Because $D\in O(nd)$, the complexity is at least $O(n^3d^3\log C)$. As both degrees of $n$ and $d$ in the complexity are $2$, our algorithm is better than his algorithm in $n$ and $d$.

Wang \cite{Wang80b} described an enhanced gcd algorithm based on the EZ-GCD algorithm, which is called EEZ-GCD algorithm. This algorithm improves the EZ-GCD algorithm by solving the leading coefficient problem, bad-zero problem, unlcuky evaluation problem and the common divisor problem. In the EEZ-GCD algorithm, an important step
is to predetermine the leading coefficient of the
desired GCD, which needs to compute the factorization of the leading coefficient. However, this step may lead a large complexity because the number of terms of the factors may be very large.
Our algorithm avoids this problem by isolating the leading coefficient into a single term to make it easy to recover.

Tang, Li and Zeng \cite{tang2018computing} described
two methods via multivariate polynomial interpolation, based on a variation of Zippel's
method and the Ben-Or/Tiwari algorithm, respectively.
Their algorithms construct the auxiliary functions by introducing a homogenizing variable, and then recover the polynomial coefficients
of the homogenizing variable via sparse interpolation algorithms, and finally recover the GCD.
For the algorithm based on Zippel's algorithm, they combine all exponents of $x_i$ in each monomial to form all candidate monomials with a certain total degree, which may lead to a large number of candidates.
For the algorithm based on Ben-Or/Tiwari over integers, they reduce $f$ to polynomials $f(p^j_1y+\sigma_1,\dots,p^j_ny+\sigma_n)$, $j=1,2,\dots,2T_{\out}$, where the $p_i$'s are different primes and the $\sigma_i$'s are constants. This kind of substitution was first proposed by  Cuyt and Lee \cite{cuyt2008new}.
The degree of these univariate polynomials is $O(D)$, and the height of the coefficients is $O(DT_{\out})$. Therefore, only computing these reductions requires at least  $O^\sim(D^2T_{\inp}T^2_{\out})$ bit operations, that is $O^\sim(n^2d^2T_{\inp}T^2_{\out})$ bit operations. Compared with their algorithm, our algorithm is better in $T_{\inp}$ and $T_{\out}$.
 As they said in their paper, their computational complexity can be reduced if the modular arithmetic technique is employed. According to our rough analysis, a prime $p\in O^\sim(n^D)$ should be selected, which may require $O(\log^3 p)$ bit operations \cite[The. 18.8]{GaGer13}, that is, $O(n^3d^3)$ bit operations. Compared with the modular version of this algorithm, our algorithm is better in $n$ and $d$ as the degrees of $n$ and $d$ in the complexity are $2$.

Hu and Monagan \cite{hu2021fast} presented a parallel GCD algorithm for sparse multivariate polynomials with
integer coefficients. The algorithm combines a Kronecker substitution with a Ben-Or/Tiwari sparse interpolation modulo a smooth prime.  Compared with their algorithm, we use randomized Kronecker substitution and only use primes with small sizes.
Moreover, their algorithm first computes $\Delta\cdot \gcd(A,B)$ instead of $\gcd(A,B)$ for some $\Delta\in\Z[x_1,\dots,x_{n-1}]$. Since $\#(\Delta\cdot \gcd(A,B))$ may be much larger than $\#(G)$, their algorithm may have large complexity.
Compared with their algorithm, we control the size of $\#(\Delta)$ by isolating the leading coefficient, so that $\#(\Delta)=1$. This difference causes us to analyze an exact complexity, while their algorithm cannot predict the size of $\#(\Delta)$ in theory.

Our work is inspired by the work of Tang, Li and Zeng \cite{tang2018computing} and Hu and Monagan \cite{hu2021fast}, where the basic idea is to reduce a multivariate GCD to a univariate GCD.
The new approach is to apply the following reduction $A(p_1y^{s_1},\dots,p_ny^{s_n})$ where the $s_i \in \N$ are randomly chosen and the $p_i$ are randomly chosen primes.
Compared with the algorithms of Tang, Li and Zeng~\cite{tang2018computing}, our algorithm avoids the shift of polynomials that may require a lot of computation, but increases the degree of $y$.
If $s_1=1,\dots,s_n=d^{n-1}$, then it is the classical Kronecker substitution, which has been used in Hu and Monagan's algorithm~\cite{hu2021fast}. If the GCD is sparse, our randomized Kronecker substitution has a lower degree.
If $s_i$'s are selected well enough, $G(p_1y^{s_1},\dots,p_ny^{s_n})$ and $G$ will
have the same number of terms, which means that all terms in $G$ have different degrees in $y$ after substitution, so $G$ can be recovered by dividing the integer coefficients of $G(p_1y^{s_1},\dots,p_ny^{s_n})$ by the primes $p_i.$






\section{Definition and Notation}
\subsection{Notations}
Let $A=c_1M_1+\cdots+c_tM_t\in\mathcal{R}[x_1,\dots,x_n]$, where $\mathcal{R} $ is a unique factorization domain (UFD), $c_i\in\mathcal{R}(\neq 0)$ and $M_i,i=1,\dots,t$ are monomials. Assume the exponent vector of $M_i$ is $\mathbf{e}_i=(e_{i,1},\dots,e_{i,n})$ and the monomials $M_i,i=1,\dots,t$ are arranged in lexicographically increasing order of $\mathbf{e}_i,i=1,\dots,t$. Here, we always assume that the lexicographical order is generated by $x_n>\cdots >x_1>y$. $c_tM_t$ is called the {\em leading term}. $c_t$ is the {\em leading coefficient}, and is denoted by $\LC(A)$.
Definition \ref{def-0} defines a GCD for a UFD.  This paper involves three UFDs: the integers $\Z$, a polynomial ring over finite field $\F_q[x_1,\dots,x_n]$ and the polynomial ring $\Z[x_1,\dots,x_n]$.

\begin{definition} \label{def-0}
Let $A,B$ be given nonzero elements of a UFD. Then $G$ is called the greatest common divisor (GCD) of $A,B$ if
\begin{enumerate}
\item $G$ divides $A,B$,
\item every common divisor of $A,B$ divides $G$, and
\item $G$ is $unit\ normal$.
\end{enumerate}
Condition 3 imposes uniqueness on $G$.
For integers $\Z$, $unit\ normal$ means $G$ is positive.
For polynomials in $\F_q[\X]$, $unit\ normal$ means  $\LC(G)$ is $1$.
For polynomials in $\Z[x_1,\dots,x_n]$, $unit\ normal$ means $\LC(G)$ is positive.
We say that $A$ is similar to $B$, $A\sim B$, if there exists a unit $a\in \mathcal{R}^*$ such that $a A=B$.
\end{definition}

\begin{definition}\label{def-3}
A term $cM$ is called the {\em monomial content} of a polynomial $f=c_1M_1+\cdots+c_tM_t\in \Z[x_1,\dots,x_n]$, if
 $c=\gcd(c_1,\dots,c_t)$ and $M=\gcd(M_1,\dots,M_t)$, where $M_i,i=1,\dots,t$ are different monomials and $c_i,i=1,\dots,t$ are non-zero coefficients.
Here $c$ is the content of $f$. Denote $c=\Cont(f)$ and denote the primitive part $f/c$ as $\Prim(f)$.
Denote $cM$ by $\mathrm{MoCont}(f)$ and we call $f/\mathrm{MoCont}(f)$ the {\em monomial primitive part} of $f$ and denote it by $\mathrm{MoPrim}(f)$.
\end{definition}

\begin{definition}\label{def-1}
We call a polynomial $f\in \Z[x_1,\dots,x_n,y]$ {\em separable} w.r.t. $y$ if $f$ has the form
$$f=c_1M_1y^{d_1}+c_2M_2y^{d_2}+\cdots+c_tM_ty^{d_t},$$
where $c_i(\neq 0)\in \Z$, $M_i,i=1,\dots,t$ are monomials in $\Z[x_1,\dots,x_n]$ and $d_i,i=1,\dots,t$ are different exponents.
\end{definition}
In other words, $f$ is separable w.r.t. $y$ if each coefficient of $f$ w.r.t $y$ has only one term.
\begin{definition}
Let $\mathbf{s}=(s_1,\dots,s_n)\in\N^n$ be an integer vector and $A\in\Z[x_1,\dots,x_n]$.
Define
\begin{equation}\label{eq-1}
A_{(\mathbf{s},y)}:=\frac{A(x_1y^{s_1},\dots,x_ny^{s_n})}{y^k}
\end{equation}
where $k$ is the smallest exponent of $y$ in $A(x_1y^{s_1},\dots,x_ny^{s_n})$.
\end{definition}

Assume $f(\X,y)\in\Z[x_1,\dots,x_n,y]$ and for $\mathbf{a}=(a_1,\dots,a_n)$ denote $f(\mathbf{a},y):=f(a_1,\dots,a_n,y)$. For a prime $p$, let $\Phi_p$ denote the modular mapping
\begin{equation}\label{eq-2}
\Phi_p(f):=f~{\rm mod}~p.
\end{equation}

\subsection{Preliminary results}
First, observe that
if $f(=\mathrm{MoPrim}(f))$ is a monomial primitive polynomial in $\Z[x_1,\dots,x_n]$, so are its factors.
Denote $y^\mathbf{s}\X:=(x_1y^{s_1},\dots,x_ny^{s_n})$ and $y^{-\mathbf{s}}\X:=(x_1/y^{s_1},\dots,x_n/y^{s_n})$, where $\mathbf{s}=(s_1,\dots,s_n)\in \N^n$.

\begin{lemma}\label{the-5}
Let $A,B\in \Z[x_1,\dots,x_n]$, $G=\gcd(A,B)$, and $\mathbf{s}=(s_1,\dots,s_n)\in \N^n$.
Then $\gcd(A_{(\mathbf{s},y)},B_{(\mathbf{s},y)})= G_{(\mathbf{s},y)}$.
\end{lemma}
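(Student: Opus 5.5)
The plan is to compare divisibility in $\Z[x_1,\dots,x_n,y]$ with divisibility in the Laurent ring $L=\Z[x_1,\dots,x_n][y,y^{-1}]$. On $L$ the substitution $\sigma:f(\X,y)\mapsto f(y^{\mathbf{s}}\X,y)$ is a ring automorphism, with inverse $f\mapsto f(y^{-\mathbf{s}}\X,y)$. Its units are $\pm y^j$. For $f\in\Z[\X]$ we have $f_{(\mathbf{s},y)}=y^{-k}\sigma(f)$. I also use the following fact: for nonzero $f,g\in \Z[\X][y]$, the lowest $y$-degree of $fg$ equals the sum of the lowest $y$-degrees of $f$ and $g$, because the product of the two lowest $y$-coefficients is nonzero in the domain $\Z[\X]$. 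Consequently, by definition $f_{(\mathbf{s},y)}$ is never divisible by $y$, and $(fg)_{(\mathbf{s},y)}=f_{(\mathbf{s},y)}\,g_{(\mathbf{s},y)}$.

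\emph{Step 1 ($G_{(\mathbf{s},y)}$ is a common divisor).} Write $A=GH$. Then $A_{(\mathbf{s},y)}=G_{(\mathbf{s},y)}H_{(\mathbf{s},y)}$ by the multiplicativity above, and likewise for $B$.

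\emph{Step 2 (every common divisor divides $G_{(\mathbf{s},y)}$).} Let $D\in\Z[\X,y]$ divide both $A_{(\mathbf{s},y)}$ and $B_{(\mathbf{s},y)}$, say $A_{(\mathbf{s},y)}=DE$. Apply $\sigma^{-1}$ to get $y^{-k}A(\X)=\sigma^{-1}(D)\,\sigma^{-1}(E)$ in $L$. Define the $y$-span of an element of $L$ as its highest minus its lowest $y$-exponent. The $y$-span is additive under multiplication, by the same lowest- and highest-coefficient argument. The left side has $y$-span $0$, so $\sigma^{-1}(D)=y^{j}D_0(\X)$ for some $D_0\in\Z[\X]$. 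Here I must check that $D_0$ is a genuine polynomial: the coefficients of $\sigma^{-1}(D)$ in $y$ are polynomials in $\X$, since $\sigma^{-1}$ only multiplies monomials by powers of $y$. Similarly $\sigma^{-1}(E)=y^{j'}E_0(\X)$, so $A=\pm D_0E_0$ in $\Z[\X]$. Hence $D_0\mid A$, and by the same argument with $B$ (same $D$, hence the same $D_0$) also $D_0\mid B$. Therefore $D_0\mid G$; write $G=D_0F$. Applying $\sigma$ gives $D=y^{j}\sigma(D_0)$, so
$$G_{(\mathbf{s},y)}=y^{-k_G}\sigma(D_0)\sigma(F)=D\cdot y^{-j-k_G}\sigma(F).$$
Set $Q=y^{-j-k_G}\sigma(F)$. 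Its $y$-coefficients lie in $\Z[\X]$, but it may a priori have negative $y$-exponents.

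\emph{Step 3 (main obstacle: $Q$ is a polynomial).} I expect this to be the delicate point. The approach is to compare lowest $y$-degrees, using additivity in $L$. Since $D$ divides $A_{(\mathbf{s},y)}$, which is not divisible by $y$, the lowest $y$-degree of $D$ is $0$. The lowest $y$-degree of $G_{(\mathbf{s},y)}$ is also $0$. Hence the lowest $y$-degree of $Q$ is $0$, so $Q\in\Z[\X,y]$ and $D\mid G_{(\mathbf{s},y)}$ in $\Z[\X,y]$.

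\emph{Step 4 (unit normality).} In the order $x_n>\cdots>x_1>y$, the leading term is decided first by the $x$-exponents. The map $f\mapsto f_{(\mathbf{s},y)}$ keeps the $x$-exponent vectors and the coefficients of the terms and only attaches powers of $y$. Therefore $\LC(G_{(\mathbf{s},y)})=\LC(G)>0$. Combining Steps 1–4 with Definition~\ref{def-0} gives $\gcd(A_{(\mathbf{s},y)},B_{(\mathbf{s},y)})=G_{(\mathbf{s},y)}$.
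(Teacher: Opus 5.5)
Your proof is correct and rests on the same idea as the paper's. You pull a common divisor back through the inverse substitution $x_i\mapsto x_i/y^{s_i}$, observe that it becomes a power of $y$ times a $y$-free common divisor of $A$ and $B$ (hence a divisor of $G$), push forward again, and use the nonzero $y^0$-coefficients of the normalized polynomials to rule out stray powers of $y$.

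Your organization is cleaner than the paper's detour through $\gcd(A(y^{\mathbf{s}}\X),B(y^{\mathbf{s}}\X))\sim y^mG(y^{\mathbf{s}}\X)$. You work in $\Z[\X][y,y^{-1}]$, where the substitution is an automorphism, use additivity of the $y$-span and of the lowest $y$-degree, and check the gcd axioms directly on an arbitrary common divisor of $A_{(\mathbf{s},y)}$ and $B_{(\mathbf{s},y)}$. This also makes explicit why the pulled-back divisor is a genuine polynomial, a point the paper passes over quickly.
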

\begin{proof}
We first claim that $\gcd(A(y^\mathbf{s}\X),B(y^\mathbf{s}\X))\sim y^mG(y^\mathbf{s}\X)$ for some integer $m\geq 0.$
Proof of the claim:
$G|A$ and $G|B$ imply that $G(y^\mathbf{s}\X)|A(y^\mathbf{s}\X)$ and $G(y^\mathbf{s}\X)|B(y^\mathbf{s}\X)$. Let $P=\gcd(A(y^\mathbf{s}\X),B(y^\mathbf{s}\X))$, then we have $G(y^\mathbf{s}\X)|P$.
We prove the reverse direction. Since $P|A(y^\mathbf{s}\X)$,
there exists a $Q\in\Z[x_1,\dots,x_n,y]$ that makes $A(y^\mathbf{s}\X)=P(\X, y)Q(\X,y)$. Replacing $x_iy^{s_i}$ by $x_i$, we have
$A=P(y^{-\mathbf{s}}\X,y)$ $Q(y^{-\mathbf{s}}\X,y)$. There exists an integer $k_1$ such that $Q(y^{-\mathbf{s}}\X,y)=y^{k_1}(Q_{\ell}y^{d_{\ell}}+\cdots+Q_1y^{d_1}+Q_0)$, where $Q_i\in \Z[x_1,\dots,x_n]$ and $d_i>0$. So $y^{k_1}P(y^{-\mathbf{s}}\X,y)$ is a polynomial in $\Z[x_1,\dots,x_n,y]$ and $y^{k_1}P(y^{-\mathbf{s}}\X,y)|A$.
For the same reason, there exists an integer $k_2$ such that $y^{k_2}P(y^{\mathbf{s}}\X,y)|B$.
Without loss of generality, assume $k_1\geq k_2$. Then $y^{k_1}P(y^{-\mathbf{s}}\X,y)|A$ and $y^{k_1}P(y^{-\mathbf{s}}\X,y)|y^{k_1-k_2}B$.
So $y^{k_1}P(y^{-\mathbf{s}}\X,y)|\gcd(A,y^{k_1-k_2}B)$.
As $\gcd(A,y^{k_1-k_2}B)=\gcd(A,B)=G$,
$y^{k_1}P(y^{-\mathbf{s}}\X,y)|G$. Replacing $x_i/y^{s_i}$ by $x_i$, we have $y^{k_1}P(\X,y)|G(y^{\mathbf{s}}\X)$.
If $k_1\geq 0$, then $P(\X,y)|G(y^{\mathbf{s}}\X)$.
If $k_1<0$, let $m'=-k_1$, then
$P(\X,y)|y^{m'}G(y^{\mathbf{s}}\X)$.
So there exists an integer $m\geq 0$ such that $P\sim y^{m}G(y^{\mathbf{s}}\X)$.
The claim is proved.

Let $C=\gcd(A_{(\mathbf{s},y)},B_{(\mathbf{s},y)})$. Since $A_{(\mathbf{s},y)}=\frac{A(y^\mathbf{s}\X)}{y^{d_A}}$, $Cy^{d_A}|A(y^\mathbf{s}\X)$.
Here $d_A$ is the integer $k$ in Eq. (\ref{eq-1}).
For the same reason, $Cy^{d_B}|B(y^\mathbf{s}\X)$.
Then $Cy^{\min\{d_A,d_B\}}|\gcd(A(y^\mathbf{s}\X),B(y^\mathbf{s}\X))$.
By the claim, $$\gcd(A(y^\mathbf{s}\X),B(y^\mathbf{s}\X))\sim y^mG(y^\mathbf{s}\X)$$ for some integer $m\geq 0$,
so $Cy^{\min\{d_A,d_B\}}|y^mG(y^\mathbf{s}\X)$.
Thus
$C|\frac{y^mG(y^\mathbf{s}\X)}{y^{\min\{d_A,d_B\}}}.$
Clearly, $d_G\leq d_A$ and $d_G\leq d_B$. So
$d_G\leq \min\{d_A,d_B\}$ and  $C|\frac{y^mG(y^\mathbf{s}\X)}{y^{d_G}}=y^mG_{(\mathbf{s},y)}$.
For the reverse direction, since $G|A$ and $G|B$, we have $G(y^\mathbf{s}\X)|A(y^\mathbf{s}\X)$ and $G(y^\mathbf{s}\X)|B(y^\mathbf{s}\X)$.
Since $G(y^\mathbf{s}\X)=G_{(\mathbf{s},y)}\cdot y^{d_G}$ and $A(y^\mathbf{s}\X)=A_{(\mathbf{s},y)}\cdot y^{d_A}$,
$G_{(\mathbf{s},y)}|A_{(\mathbf{s},y)}$. For the same reason, we have $G_{(\mathbf{s},y)}|B_{(\mathbf{s},y)}$.
So we have $G_{(\mathbf{s},y)}|\gcd(A_{(\mathbf{s},y)},B_{(\mathbf{s},y)})=C.$
So there exists an integer $m'$ such that $C\sim y^{m'}G_{(\mathbf{s},y)}$. Regarding $C$ and $G_{(\mathbf{s},y)}$ as polynomials in $y$ with coefficients in $\Z[x_1,\dots,x_n]$, we know that both $C$ and $G_{(\mathbf{s},y)}$ have non-zero constants, so $m'=0$. As $\Z$ has only units $\pm 1$ and $G_{(\mathbf{s},y)}$ has the same leading coefficient as $G$, $\gcd(A_{(\mathbf{s},y)},B_{(\mathbf{s},y)})=G_{(\mathbf{s},y)}$. The lemma is proved.
\end{proof}\qed

\begin{theorem}\label{the-2}
Assume $f=c_1M_1y^{d_1}+\cdots+c_tM_ty^{d_t}\in \Z[x_1,\dots,x_n,y]$ is separable w.r.t. $y$, where $M_i,i=1,\dots,t$ are monomials and $d_i,i=1,\dots,t$ are different exponents of $y$. Let $p_1,\dots,p_n$ be any different primes with $p_i\nmid c_j$ for all $i=1,\dots,n,j=1,\dots,t$. If $\mathrm{MoCont}(f)=1$, then
the content of $f(p_1,\dots,p_n,y)$ is $1$.
\end{theorem}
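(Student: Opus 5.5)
Because $f$ is separable with respect to $y$ and the exponents $d_1,\dots,d_t$ are pairwise distinct, evaluating $x_i\mapsto p_i$ produces no cancellation among terms. Writing $M_j=x_1^{e_{j,1}}\cdots x_n^{e_{j,n}}$, we get
$$f(p_1,\dots,p_n,y)=\sum_{j=1}^t c_j\,p_1^{e_{j,1}}\cdots p_n^{e_{j,n}}\,y^{d_j},$$
so the integer coefficients of $f(p_1,\dots,p_n,y)$ are exactly $a_j:=c_jp_1^{e_{j,1}}\cdots p_n^{e_{j,n}}$, $j=1,\dots,t$, and all of them are nonzero. The content is therefore $\gcd(a_1,\dots,a_t)$. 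The plan is to assume this gcd is not $1$, pick a prime $q$ dividing every $a_j$, and derive a contradiction with $\mathrm{MoCont}(f)=1$, splitting on whether $q$ is one of the $p_i$.

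\emph{Case 1: $q\notin\{p_1,\dots,p_n\}$.} Then $q$ is coprime to every $p_i$, so $q\mid a_j$ forces $q\mid c_j$ for each $j$. Hence $q\mid\gcd(c_1,\dots,c_t)=\Cont(f)$. But $\mathrm{MoCont}(f)=1$ means $\Cont(f)=1$ and $\gcd(M_1,\dots,M_t)=1$, which is a contradiction.

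\emph{Case 2: $q=p_i$ for some $i$.} By hypothesis $p_i\nmid c_j$, and since the primes are distinct, $p_i\nmid p_k$ for $k\neq i$. So the $p_i$-adic valuation of $a_j$ is exactly $e_{j,i}$. Then $p_i\mid a_j$ for all $j$ gives $e_{j,i}\ge1$ for all $j$, meaning $x_i$ divides every $M_j$ and hence divides $\gcd(M_1,\dots,M_t)$. This contradicts the monomial part of $\mathrm{MoCont}(f)$ being $1$.

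Since both cases are impossible, no prime divides all the $a_j$, so the content of $f(p_1,\dots,p_n,y)$ is $1$. The only step that needs care is the opening observation that separability prevents terms from merging, so the coefficients really are the individual $a_j$. Beyond that, the valuation bookkeeping in Case~2 is where the distinctness of the $p_i$ and the condition $p_i\nmid c_j$ are used; otherwise the argument is routine.
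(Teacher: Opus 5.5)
Your proof is correct and takes the same approach as the paper. Separability makes the coefficients of $f(p_1,\dots,p_n,y)$ exactly $c_jM_j(p_1,\dots,p_n)$, and their gcd is then $1$ because $\mathrm{MoCont}(f)=1$ and $p_i\nmid c_j$; your case split on whether a common prime divisor $q$ is one of the $p_i$ just spells out the step the paper leaves implicit.
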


\begin{proof}
$f(p_1,\dots,p_n,y)=c_1M_1(p_1,\dots,p_n)y^{d_1}+\cdots+c_tM_t(p_1,\dots,p_n)y^{d_t}$.
So the content of $f(p_1,\dots,p_n,y)$ is the GCD of $c_iM_i(p_1,\dots,p_n),i=1,\dots,t$. Since $\mathrm{MoCon}(f)=1$ and $p_i\nmid c_j$, the content of $f(p_1,\dots,p_n,y)$ is $1$.
\end{proof}\qed

Our proof will make extensive use of the Schwartz-Zippel Lemma.
\begin{lemma}\label{lm-6}\cite[Lemma 6.44]{GaGer13} Let $\mathcal{R}$ be an integral domain  and $A \in \mathcal{R}[x_1,\dots,x_n]$
be non-zero and with total degree $D$ and let $S\subset \mathcal{R}$ be a finite set. If $\mathbf{a}=(a_1,\dots,a_n)$ is chosen at random from $S^n$ then ${\rm Prob}[A(\mathbf{a})=0]\leq \frac{D}{|S|}$.
\end{lemma}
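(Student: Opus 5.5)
The statement immediately above is the Schwartz--Zippel lemma (Lemma~\ref{lm-6}), cited from \cite{GaGer13}. I would prove it by induction on the number of variables $n$.

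\emph{Base case $n=1$.} A nonzero univariate polynomial $A$ of degree $D$ over an integral domain $\mathcal{R}$ has at most $D$ roots in $\mathcal{R}$. To see this, pass to the fraction field of $\mathcal{R}$, where the factor theorem applies and each root contributes a linear factor. Hence at most $D$ of the $|S|$ choices in $S$ give $A(a_1)=0$, and the probability is at most $D/|S|$.

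\emph{Inductive step.} For $n>1$, I would write $A$ as a polynomial in $x_n$ with coefficients in $\mathcal{R}[x_1,\dots,x_{n-1}]$:
\[
A=\sum_{j=0}^{k} A_j(x_1,\dots,x_{n-1})\,x_n^j,
\]
where $k$ is the largest index with $A_k\neq 0$. Then $\deg A_k\le D-k$, because $A_k x_n^k$ contributes terms of total degree $\deg A_k + k \le D$. Let $E_1$ be the event that $A_k(a_1,\dots,a_{n-1})=0$.
\begin{itemize}
\item By the induction hypothesis applied to $A_k$, we have ${\rm Prob}[E_1]\le (D-k)/|S|$.
\item Conditioned on $\neg E_1$, fix the values $a_1,\dots,a_{n-1}$. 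Then $A(a_1,\dots,a_{n-1},x_n)$ is a nonzero univariate polynomial of degree exactly $k$. Since $a_n$ is uniform on $S$ and independent of the other coordinates, the base case gives probability at most $k/|S|$ that it vanishes at $a_n$.
\end{itemize}

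Combining the two cases,
\[
{\rm Prob}[A(\mathbf a)=0]\le {\rm Prob}[E_1]+{\rm Prob}[A(\mathbf a)=0\mid \neg E_1]\le \frac{D-k}{|S|}+\frac{k}{|S|}=\frac{D}{|S|}.
\]

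The only delicate points are the conditioning step and the degree bookkeeping. Conditioning is legitimate because the coordinates of $\mathbf{a}$ are chosen independently and uniformly from $S$, so fixing $a_1,\dots,a_{n-1}$ leaves $a_n$ uniform on $S$. The bound $\deg A_k\le D-k$ is what makes the two contributions sum to exactly $D/|S|$.

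If $k=0$, the argument still works: $A=A_0$ does not involve $x_n$, so the claim follows directly from the induction hypothesis on $A_0$ alone.
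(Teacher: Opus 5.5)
Your proof is correct: it is the standard induction on $n$, splitting on whether the leading coefficient $A_k$ of $A$ in $x_n$ (of total degree at most $D-k$) vanishes at $(a_1,\dots,a_{n-1})$. The paper gives no proof of this lemma (it only cites \cite[Lemma 6.44]{GaGer13}), but its proof of the distinct-coordinates variant, Lemma~\ref{lm-8}, uses this same decomposition, counting zeros in $S^n$ instead of conditioning on probabilities.
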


We also need the following variation of Schwartz-Zippel Lemma.

\begin{lemma}\label{lm-8}
Let $\mathcal{R}$ be an integral domain and $A \in \mathcal{R}[\X]$
be non-zero and with total degree $D$ and let $S\subset \mathcal{R}$ be a finite set with $|S|\geq n$. If $\mathbf{a}=(a_1,\dots,a_n)$ is chosen at random from $S^n$ and $a_i\neq a_j$ for $i\neq j$, then ${\rm Prob}[A(\mathbf{a})=0]\leq \frac{D}{|S|-n+1}$.
\end{lemma}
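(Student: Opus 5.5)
We prove Lemma \ref{lm-8} by a single induction on the number of variables $n$, following the usual proof of the Schwartz--Zippel Lemma (Lemma \ref{lm-6}) but adding one bookkeeping device. We read ``chosen at random from $S^n$ with $a_i\neq a_j$'' as: $a_1$ is uniform on $S$, and for each $k$, conditionally on $a_1,\dots,a_{k-1}$, the entry $a_k$ is uniform on $S\setminus\{a_1,\dots,a_{k-1}\}$. This is the same as the uniform distribution on injective tuples. The key observation is that when the $k$-th coordinate is drawn, at least $|S|-(k-1)\geq |S|-n+1$ choices remain.

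Let $D$ be the total degree of $A$ and write $A=\sum_{j=0}^{e} A_j(x_1,\dots,x_{n-1})\,x_n^{j}$ with $A_e\neq 0$. Then $\deg A_e\leq D-e$. We condition on the first $n-1$ coordinates $(a_1,\dots,a_{n-1})$, which form a uniformly random injective tuple from $S^{n-1}$. We then split the event $A(\mathbf a)=0$ into two parts.

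First, the event $A_e(a_1,\dots,a_{n-1})=0$. By the induction hypothesis applied to $A_e$ in $n-1$ variables, with $|S|\geq n-1$, its probability is at most $\frac{D-e}{|S|-n+2}\leq\frac{D-e}{|S|-n+1}$.

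Second, the event that $A_e(a_1,\dots,a_{n-1})\neq 0$ but $A(\mathbf a)=0$. In this case the univariate polynomial $A(a_1,\dots,a_{n-1},x_n)\in\mathcal R[x_n]$ is nonzero of degree $e$. Since $\mathcal R$ is an integral domain, it has at most $e$ roots. The coordinate $a_n$ is uniform on a set of size $|S|-n+1\geq 1$, so this event has probability at most $\frac{e}{|S|-n+1}$.

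Adding the two bounds gives $\frac{D}{|S|-n+1}$. The base case $n=1$ is just the root count with $|S|$ choices, i.e.\ the case $e=D$ of the second event. The inductive step as written already covers the degenerate case $e=0$.

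The main obstacle is that the coordinates are no longer independent, so Lemma \ref{lm-6} cannot be cited directly. The sequential-sampling reformulation handles this. The one point to check is that the marginal law of $(a_1,\dots,a_{n-1})$ is again the uniform injective distribution on $S^{n-1}$, so that the induction hypothesis applies. This holds because every injective $(n-1)$-prefix extends to exactly $|S|-n+1$ injective $n$-tuples, so each prefix receives the same mass.
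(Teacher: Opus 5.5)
Your proof is correct and is essentially the paper's argument. The paper also inducts on $n$ and expands $A$ in $x_n$ with leading coefficient $A_k$ of degree at most $D-k$. It then splits the injective zeros into those where $A_k$ vanishes, handled by the induction hypothesis with the same slack of weakening $|S|-n+2$ to $|S|-n+1$, and those where $A_k$ does not vanish, with at most $k$ roots per prefix. The only difference is that the paper phrases this as a count of at most $D\,|S|(|S|-1)\cdots(|S|-n+2)$ injective zeros rather than via conditional probabilities; your check that each injective $(n-1)$-prefix extends in exactly $|S|-n+1$ ways is what makes the two formulations equivalent.
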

\begin{proof}
Denote $N=|S|$. It suffices to show that $A$ has at most $DN(N-1)\cdots (N-n+2)$ zeros such that each entry is different from each other.
We prove it by induction on $n$. The case $n=1$ is clear, since a nonzero univariate polynomial of degree at most $D$ over an integral domain has at most $D$ zeros.
For the induction step, we write $A$ as a polynomial in $x_n$: $A=\sum_{0\leq i\leq k}A_ix_n^i$ with $A_i\in \mathcal{R}[x_1,x_2,\dots,x_{n-1}]$ for $0\leq i\leq k$ and $A_k\neq 0$. Then $\deg(A_k)\leq D-k$.
By the induction hypothesis, $A_k$ has at most $(D-k)N(N-1)\cdots (N-n+3)$ such zeros in $S^{n-1}$, where each of these entries is different from the others. So there are at most $(D-k)N(N-1)\cdots (N-n+3)(N-n+1)(\leq(D-k)N(N-1)\cdots(N-n+3)(N-n+2))$ common zeros of $A$ and $A_k$ in $S^n$ where each of these entries is different from the others.
Furthermore, for each $(a_1,\dots,a_{n-1})\in S^{n-1}$ with $A_k(a_1,\dots,a_{n-1})\neq 0$, the univariate polynomial $A(a_1,\dots,a_{n-1},x_n)=\sum_{0\leq i\leq k}A_i(a_1,\dots,a_{n-1})x_n^i\in \mathcal{R}[x_n]$ of degree $k$ has at most $k$ zeros, so there are at most $k N(N-1)\cdots (N-n+3)(N-n+2)$ zeros.
Overall, the total number of zeros of $A$ in $S^n$ is bounded by $(D-k)N(N-1)\cdots (N-n+3)(N-n+2)+k N(N-1)\cdots (N-n+3)(N-n+2)=D N(N-1)\cdots (N-n+3)(N-n+2)$.
As there are $N(N-1)\cdots (N-(n-1))$ points in $S^n$ where each of these entries is different from the others, ${\rm Prob}[A(\mathbf{a})=0]\leq \frac{D N(N-1)\cdots (N-n+3)(N-n+2)}{N(N-1)\cdots (N-(n-1))}=\frac{D}{N-n+1}$.
\end{proof}\qed

The following theorem shows that $f\in\mathbb{Z}[\X]$ can be separated by introducing a new variable $y$.

\begin{theorem}\label{the-1}
Assume $f\in \Z[x_1,\dots,x_n]$ and $T\geq \#f$. If $\mathbf{s}=(s_1,\dots,s_n)\in\N^n$ is chosen randomly from $[0,\frac92T(T-1)]^n$, then with probability $\geq 8/9$, $f(x_1y^{s_1},\dots,x_ny^{s_n})$ is separated w.r.t. $y$.
\end{theorem}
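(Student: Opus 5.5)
Write $f=c_1M_1+\cdots+c_tM_t$ with $t=\#f\le T$, distinct monomials $M_i=\X^{\mathbf{e}_i}$, and nonzero $c_i$. Substituting $x_i\mapsto x_iy^{s_i}$ turns the term $c_iM_i$ into $c_iM_iy^{\mathbf{e}_i\cdot\mathbf{s}}$, where $\mathbf{e}_i\cdot\mathbf{s}=\sum_k e_{i,k}s_k$. Because the $M_i$ are distinct monomials in $\X$, no cancellation can occur among these terms. So $f(x_1y^{s_1},\dots,x_ny^{s_n})$ is separated w.r.t. $y$ exactly when the $t$ integers $\mathbf{e}_i\cdot\mathbf{s}$ are pairwise distinct. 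The plan is a union bound over the $\binom{t}{2}$ pairs, with the Schwartz--Zippel Lemma (Lemma~\ref{lm-6}) handling each pair.

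Fix $i<j$ and consider the linear form $L_{ij}(z_1,\dots,z_n)=\sum_k (e_{i,k}-e_{j,k})z_k\in\Z[z_1,\dots,z_n]$. It is nonzero because $\mathbf{e}_i\neq\mathbf{e}_j$, and its total degree is $1$. The bad event for this pair is $L_{ij}(\mathbf{s})=0$. Apply Lemma~\ref{lm-6} with $\mathcal{R}=\Z$ and $S=\{0,1,\dots,\lfloor\frac92T(T-1)\rfloor\}$. Then $|S|\ge \frac92T(T-1)$, since $|S|=\lfloor N\rfloor+1>N$ for $N=\frac92T(T-1)$. Hence
\[
{\rm Prob}[L_{ij}(\mathbf{s})=0]\le \frac{1}{|S|}\le \frac{2}{9T(T-1)}.
\]
Summing over at most $\binom{T}{2}=T(T-1)/2$ pairs, the probability that some collision occurs is at most
\[
\frac{T(T-1)}{2}\cdot\frac{2}{9T(T-1)}=\frac19,
\]
so separation holds with probability at least $8/9$.

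The only delicate points are edge cases. If $T\le 1$, then $t\le1$, the form is trivially separated, and the interval degenerates to $\{0\}$, so there is nothing to prove. One should also interpret ``chosen randomly from $[0,\frac92T(T-1)]^n$'' as uniform over the integer points, which is the reading under which $|S|$ is computed above. The main step is the observation that separability is equivalent to distinctness of the linear forms $\mathbf{e}_i\cdot\mathbf{s}$, and this is immediate. No step looks like a real obstacle; the union bound is essentially tight with the constant $\frac92$ chosen to give $1/9$.
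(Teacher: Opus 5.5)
Your proof is correct and is essentially the paper's argument. The paper also reduces separation to distinctness of the exponents $S_i=\sum_k s_ke_{i,k}$. The only difference is that it applies the Schwartz--Zippel Lemma (Lemma~\ref{lm-6}) once to the product $P=\prod_{i<j}(S_i-S_j)$, of degree $\frac12t(t-1)\le\frac12T(T-1)$, rather than to each linear factor followed by a union bound; this gives the same $1/9$ failure bound, and your explicit count $|S|=\lfloor\frac92T(T-1)\rfloor+1$ and treatment of $t\le 1$ are slightly more careful than the paper's.
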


\begin{proof}
Denote $t:=\# f$. If $t=1$, it is obvious. Now assume $T\geq t\geq 2$. Assume $f=c_1x_1^{e_{1,1}}\cdots x_n^{e_{1,n}}+\cdots+c_tx_1^{e_{t,1}}\cdots x_n^{e_{t,n}}$. Then
$$f(x_1y^{s_1},\dots,x_ny^{s_n})=c_1x_1^{e_{1,1}}\cdots x_n^{e_{1,n}}y^{\sum_{i=1}^ns_ie_{1,i}}+\cdots+c_tx_1^{e_{t,1}}\cdots x_n^{e_{t,n}}y^{\sum_{i=1}^ns_ie_{t,i}}.$$
So $f(x_1y^{s_1},\dots,x_ny^{s_n})$ is separated w.r.t. $y$ if and only if all the exponents $S_i:=s_1e_{i,1}+\cdots+s_ne_{i,n},i=1,\dots,t$ of $y$ are different.
Let $P(s_1,\dots,s_n)=\prod_{i<j}(S_i-S_j)$ be a polynomial with degree $\frac12t(t-1)\leq \frac 12T(T-1)$.
So by Schwartz-Zippel lemma (Lemma~\ref{lm-6}), if a vector $(s_1,\dots,s_n)$ is randomly chosen from $[0,\frac92T(T-1)]^n$, the probability of $P(s_1,\dots,s_n)\neq 0$ is at least $1-\frac{\frac12T(T-1)}{\frac92T(T-1)}=8/9$.
\end{proof}\qed

\begin{example}
\label{ex-11}
Let $f=4x_1^2x_2^3+5x_1^5x_2^6+10x_1^4x_2^4\in\Z[x_1,x_2]$. For $\mathbf{s}\in \N^2$, the exponents of $y$ in $f(x_1y^{s_1},x_2y^{s_2})$ are
\[
\begin{cases}
S_1=2s_1+3s_2\\
S_2=5s_1+6s_2\\
S_3=4s_1+4s_2
\end{cases}
\]
If we choose $s_1:=2,s_2:=3$, then $S_1=13,S_2=28,S_3=20$, which makes $P(2,3)\neq 0$. Thus $f(x_1y^2,x_2y^3)$ is separated w.r.t $y$.
\end{example}

\subsection{Resultant}
Let $F_1=\sum_{i=0}^{d}a_iy^i$ and $F_2=\sum_{i=0}^{\ell} b_iy^i$ be non-zero. The $Sylvester\ matrix$ of $F_1,F_2$ is the $d+\ell$ by $d+\ell$ matrix

\begin{equation}
\left(
  \begin{array}{cccccccc}
    a_d & a_{d-1} & \cdots & a_1 & a_0 &  & \\
        & a_d & a_{d-1} & \cdots & a_1 & a_0 & \\
        &    & \cdots & \cdots & \cdots & \cdots &  \\
     &  &  & a_d & \cdots & \cdots & a_0  \\
    b_{\ell} & b_{\ell-1} & \cdots & b_1 & b_0 &  & \\
        & b_{\ell} & b_{\ell-1} & \cdots & b_1 & b_0 & \\
        &    & \cdots & \cdots & \cdots & \cdots &  \\
     &  &  & b_{\ell} & \cdots & \cdots & b_0  \\
   \end{array}
\right)
\end{equation}
where the upper part of the matrix consists of $\ell$ rows of coefficients of $F_1$, the lower part consists of $d$ rows of coefficients of $F_2$.
The resultant of $F_1$ and $F_2$ is the determinant of the Sylvester matrix of $F_1,F_2$, written as $\res_y(F_1,F_2)$. Specifically, if $\deg_y F_1=0$ or $\deg_y F_2=0$, we define
\begin{equation}\label{eq-3}
\res_y(F_1,F_2)=1.
\end{equation}

This is not the standard way to define the resultant of Equ. (\ref{eq-3}) as $1$. For the constant $c$, the standard is $\res_y(c,y^d)=c^d$. But we always make the leading coefficient non-zero after assigning a value or modulus $p$, and we only care if it is zero, so we simply define it as $1$. 
In this paper, we do not define the resultant for the cases $F_1=0$ or $F_2=0$.

The following are some facts. Denote $\LC_y(F_1)$ as the leading coefficient of $F_1$ w.r.t. $y$.

\begin{lemma}\cite[Lemma 4]{hu2021fast}\label{lm-2}
Let $\mathcal{R}$ be any integral domain and $F_1,F_2\in\mathcal{R}[x_1,\\ \dots,x_n,y]$ with $d=\deg_y(F_1)>0$ and $\ell=\deg_yF_2>0$. Let $a_d=\LC_y(F_1),b_{\ell}=\LC_y(F_2),R=\res_{y}(F_1,F_2),\overrightarrow{\alpha}\in \mathcal{R}^n$. Then
\begin{itemize}
\item[(\lowercase\expandafter{\romannumeral1})]   $a_d,b_{\ell},R$ are polynomials in $\mathcal{R}[x_1,\dots,x_n]$;
\end{itemize}
If $\mathcal{R}$ is a field and $a_d(\overrightarrow{\alpha})\neq 0$ and $b_{\ell}(\overrightarrow{\alpha})\neq 0$, then
\begin{itemize}
\item[(\lowercase\expandafter{\romannumeral2})]
    $\res_{y}(F_1(y,\overrightarrow{\alpha}),F_2(y, \overrightarrow{\alpha}))=R(\overrightarrow{\alpha})$ and  $\deg_{y}\gcd(F_1(y,\overrightarrow{\alpha}),F_2(y,\overrightarrow{\alpha}))>0\Longleftrightarrow \res_{y}(F_1(y,\overrightarrow{\alpha}),F_2(y,\overrightarrow{\alpha}))=0$;
\end{itemize}
If $\mathcal{R}=\Z$, $p$ is a prime and $\Phi_p(a_d)\neq 0$ and $\Phi_p(b_{\ell})\neq 0$ then
\begin{itemize}
\item[(\lowercase\expandafter{\romannumeral3})]$\Phi_p(F_1),\Phi_p(F_2),\Phi_p(R)$ are polynomials in $\F_p[x_1,\dots,x_n]$.
    \item[(\lowercase\expandafter{\romannumeral4})]$\res_y(\Phi_p(F_1),\Phi_p(F_2))=\Phi_p(R)$ and
\item[(\lowercase\expandafter{\romannumeral5})] $\deg_y\gcd(\Phi_p(F_1),\Phi_p(F_2))>0\Longleftrightarrow \res_y(\Phi_p(F_1),\Phi_p(F_2))=0$.
\end{itemize}
\end{lemma}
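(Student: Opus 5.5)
The plan is to prove the five items in order, working throughout with the Sylvester matrix $S$ of $F_1,F_2$ regarded as polynomials in $y$ with coefficients in $\mathcal{R}[x_1,\dots,x_n]$. Every entry of $S$ is either $0$ or one of the coefficients $a_i,b_j\in\mathcal{R}[x_1,\dots,x_n]$. This immediately gives (i): $a_d$ and $b_\ell$ are coefficients of $F_1,F_2$, and $R=\det S$ is a polynomial expression in the entries.

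For (ii), the key observation is that evaluation $x\mapsto\overrightarrow{\alpha}$ is a ring homomorphism $\mathcal{R}[x_1,\dots,x_n]\to\mathcal{R}$, and it commutes with determinants. Because $a_d(\overrightarrow{\alpha})\neq 0$ and $b_\ell(\overrightarrow{\alpha})\neq 0$, the specialized polynomials still have exact degrees $d$ and $\ell$. Hence applying the homomorphism entrywise to $S$ yields exactly the Sylvester matrix of $F_1(y,\overrightarrow{\alpha})$ and $F_2(y,\overrightarrow{\alpha})$, with the same dimensions $d+\ell$. In particular, no degree drop occurs that would call for the special convention (\ref{eq-3}). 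Therefore $\res_y(F_1(y,\overrightarrow{\alpha}),F_2(y,\overrightarrow{\alpha}))=R(\overrightarrow{\alpha})$.

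The equivalence in (ii) is the classical fact over a field $K=\mathcal{R}$: two polynomials $f,g\in K[y]$ of positive degrees $d,\ell$ have a nonconstant common factor iff there exist $u,v$ with $\deg u<\ell$, $\deg v<d$, not both zero, and $uf+vg=0$. This linear condition says that the transpose of the Sylvester matrix has a nontrivial kernel, i.e.\ the resultant vanishes. I would sketch both directions:
\begin{itemize}
\item If $h=\gcd(f,g)$ is nonconstant, take $u=g/h$ and $v=-f/h$.
\item Conversely, if $uf=-vg$ with $\deg u<\ell$, then since $g\mid uf$ and $\deg u<\deg g$, $g$ cannot divide $u$, so $\gcd(f,g)$ must be nonconstant.
\end{itemize}

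Items (iii)--(v) follow the same pattern, with the reduction homomorphism $\Phi_p:\Z\to\F_p$ extended coefficientwise to $\Z[x_1,\dots,x_n,y]$.
\begin{itemize}
\item (iii) is immediate from (i).
\item For (iv), the hypotheses $\Phi_p(a_d)\neq 0$ and $\Phi_p(b_\ell)\neq0$ guarantee that $\deg_y\Phi_p(F_1)=d$ and $\deg_y\Phi_p(F_2)=\ell$. So $\Phi_p(S)$ is the Sylvester matrix of $\Phi_p(F_1),\Phi_p(F_2)$, and $\Phi_p(\det S)=\det\Phi_p(S)$.
\item For (v), note that $\F_p[x_1,\dots,x_n]$ is a UFD but not a field, so I would pass to the fraction field $K=\F_p(x_1,\dots,x_n)$. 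There the field argument of (ii) applies and gives a nonconstant common factor in $K[y]$ iff the resultant is $0$. Gauss's lemma then relates a common factor of positive $y$-degree in $K[y]$ to one in $\F_p[x_1,\dots,x_n][y]$, after clearing denominators and taking the primitive part with respect to $y$.
\end{itemize}

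The main obstacle is only bookkeeping: ensuring that the degrees do not drop under specialization, which is exactly what the leading-coefficient hypotheses secure, and handling the Gauss-lemma passage in (v). Everything else is the standard Sylvester-matrix linear algebra.
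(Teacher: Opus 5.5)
Your proposal is correct. The paper gives no proof of its own here: it imports the lemma from \cite{hu2021fast}, and elsewhere relies on Sylvester's criterion from \cite{Geddes92Algorithms}. Your route is the standard argument behind that citation: evaluation or reduction mod $p$ commutes with the Sylvester determinant once the leading coefficients survive; over a field, a vanishing resultant is equivalent to a nonconstant gcd; and Gauss's lemma carries (v) from $\F_p(x_1,\dots,x_n)[y]$ back to $\F_p[x_1,\dots,x_n,y]$.

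Two small points. In the converse direction of the field criterion, say explicitly that $u\neq 0$: if $u=0$, then $vg=0$ forces $v=0$, contradicting that $(u,v)$ is nonzero. Without this, the step ``$g$ cannot divide $u$'' fails, since $g$ divides $0$. Also, item (iii) as printed places $\Phi_p(F_1),\Phi_p(F_2)$ in $\F_p[x_1,\dots,x_n]$. That is a typo, since they lie in $\F_p[x_1,\dots,x_n,y]$; your ``immediate from (i)'' proves the intended claim about $\Phi_p(a_d),\Phi_p(b_\ell),\Phi_p(R)$.
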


\begin{lemma}\label{the-14}
Let $A,B\in \mathcal{R}[x_1,\dots,x_n],\mathbf{s}=(s_1,\dots,s_n)\in\N^n$ and $R=\res_y(A_{(\mathbf{s},y)},\\ B_{(\mathbf{s},y)})$. Then $\deg R\leq 2\|\mathbf{s}\|_{\infty}\deg A\cdot\deg B$, where $\|\mathbf{s}\|_{\infty}=\max(s_1,\dots,s_n)$.
\end{lemma}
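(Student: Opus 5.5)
Write $F_1=A_{(\mathbf{s},y)}$ and $F_2=B_{(\mathbf{s},y)}$, and regard them as polynomials in $y$ with coefficients in $\mathcal{R}[x_1,\dots,x_n]$. The plan is to bound the $y$-degrees and the coefficient degrees separately, and then bound the total degree of each term in the Leibniz expansion of the Sylvester determinant. If $\deg_y F_1=0$ or $\deg_y F_2=0$, then $R=1$ by convention (\ref{eq-3}), and the bound is trivial. So assume both $y$-degrees are positive.

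\emph{Step 1 (degree bounds).} A monomial $x_1^{e_1}\cdots x_n^{e_n}$ of $A$ becomes $x_1^{e_1}\cdots x_n^{e_n}y^{\sum s_ie_i}$, and $\sum_i s_ie_i\le \|\mathbf{s}\|_\infty \deg A$. Dividing by $y^k$ only lowers $y$-exponents, so $d:=\deg_y F_1\le \|\mathbf{s}\|_\infty\deg A$. Similarly $\ell:=\deg_y F_2\le \|\mathbf{s}\|_\infty\deg B$. The $x$-part of each monomial is unchanged, so every coefficient $a_i\in\mathcal{R}[x_1,\dots,x_n]$ of $F_1$ has total degree at most $\deg A$. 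Likewise every $b_j$ has total degree at most $\deg B$.

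\emph{Step 2 (determinant expansion).} The Sylvester matrix has $\ell$ rows built from the $a_i$ and $d$ rows built from the $b_j$. In the Leibniz expansion, each product takes exactly one entry from each row. Hence each nonzero term has total degree at most $\ell\deg A+d\deg B$. Substituting Step 1 gives
$$\ell\deg A+d\deg B\le \|\mathbf{s}\|_\infty\deg B\deg A+\|\mathbf{s}\|_\infty\deg A\deg B=2\|\mathbf{s}\|_\infty\deg A\cdot\deg B,$$
and so $\deg R\le 2\|\mathbf{s}\|_\infty\deg A\cdot\deg B$.

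No step here is a real obstacle. The only point to check is that the shift by $y^{k}$ and the fact that the $x$-degrees are preserved do not break the bounds. Both only decrease degrees, so the argument is routine.
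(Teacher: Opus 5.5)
Your proof is correct and follows the paper's argument essentially verbatim: bound $\deg_y F_1\le\|\mathbf{s}\|_\infty\deg A$, $\deg_y F_2\le\|\mathbf{s}\|_\infty\deg B$ and the $x$-degrees of the coefficients by $\deg A$, $\deg B$, then read off the bound $\ell\deg A+d\deg B$ from the Sylvester determinant. Your explicit handling of the convention case $R=1$ (when one $y$-degree is $0$) is a small detail the paper leaves implicit.
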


\begin{proof}
Assume $A_{(\mathbf{s},y)}=a_dy^d+\cdots+a_1y+a_0$ and $B_{(\mathbf{s},y)}=b_{\ell}y^{\ell}+\cdots+b_1y+b_0$, where $a_i,b_j\in \mathcal{R}[x_1,\dots,x_n]$. By the definition of $A_{(\mathbf{s},y)}$ and $B_{(\mathbf{s},y)}$, we have $d\leq \|\mathbf{s}\|_{\infty}\deg A$ and $\ell\leq \|\mathbf{s}\|_{\infty}\deg B$. As the Sylvester matrix is $d+\ell$ by $d+\ell$ matrix and $\deg a_i\leq \deg A,\deg b_j\leq \deg B$, the degree of $R$ is no more than $\ell\cdot \deg A+d\cdot \deg B$, which is $\leq 2\|\mathbf{s}\|_{\infty}\deg A\cdot\deg B$.
\end{proof}\qed

\subsection{Good point}
The main idea of our algorithm is mapping the entire problem to a simpler
domain via a homomorphism. Let
$$\sigma_\mathbf{p}:\Z[x_1,\dots,x_n,y]\rightarrow \Z[y]$$
be the ring homomorphism defined by evaluating $x_i=p_i,i=1,\dots,n$, where $\mathbf{p}=(p_1,\dots,p_n)$.
Let $F_1,F_2\in \Z[x_1,\dots,x_n,y]$ and $C=\gcd(F_1,F_2)$. Then $\sigma_\mathbf{p}(F_1)=F_1(\mathbf{p},y)$ and $\sigma_\mathbf{p}(F_2)=F_2(\mathbf{p},y)$. Suppose we compute the GCD of the univariate polynomials $\sigma_\mathbf{p}(F_1)$ and $\sigma_\mathbf{p}(F_2)$. Then, provided $\sigma_p(C) \ne 0$, it is
easy to see that
$$\sigma_\mathbf{p}(C)|\gcd(\sigma_\mathbf{p}(F_1),\sigma_\mathbf{p}(F_2)).$$

If $\deg\sigma_\mathbf{p}(C)=\deg\gcd(\sigma_\mathbf{p}(F_1),\sigma_\mathbf{p}(F_2))$,
$\gcd(\sigma_\mathbf{p}(F_1),\sigma_\mathbf{p}(F_2))$ retains parts of the information of $C$ to solve the problem in the original domain.
The leading coefficient $\LC_y(C)(\mathbf{p})$ of the GCD is not zero
   if $\sigma_\mathbf{p}(F_1)$ and $\sigma_\mathbf{p}(F_2)$ do not
decrease in degree, which leads to the following definition.

\begin{definition}\label{def-2}
Let $F_1,F_2\in \Z[x_1,\dots,x_n,y]$ and $C=\gcd(F_1,F_2)$. Let $\mathbf{p}=(p_1,\dots,p_n)\in\Z^n$. We say $\mathbf{p}$ is a {\em good point} for $F_1,F_2$ if $\sigma_{\mathbf{p}}\left( \LC_y(F_1) \right) \neq 0$, $\sigma_{\mathbf{p}}\left( \LC_y(F_2) \right) \neq 0$ and $\deg \sigma_\mathbf{p}(C) =\deg \gcd(\sigma_\mathbf{p}(F_1),\sigma_\mathbf{p}(F_2))$.
\end{definition}

Since $\sigma_\mathbf{p}(C)|\gcd(\sigma_\mathbf{p}(F_1),\sigma_\mathbf{p}(F_2))$,
if $\mathbf{p}$ is a good point for $F_1,F_2$, we have $\kappa\cdot\sigma_\mathbf{p}(C)=\gcd(\sigma_\mathbf{p}(F_1),\sigma_\mathbf{p}(F_2))$ for some constant $\kappa\in\mathbb{Z}$.

\begin{lemma}\label{the-13}
Let $F_1,F_2\in \Z[x_1,\dots,x_n,y]$ be non-zero and $\mathbf{p}=(p_1,\dots,p_n)\in \Z^n$. {\color{blue}Let} $$R=\res_y(F_1/\gcd(F_1,F_2),F_2/\gcd(F_1,F_2)).$$ Let $L=R\cdot \LC_y(F_1)\cdot \LC_y(F_2)$. Then
$\mathbf{p}$ is a good point for $F_1,F_2$ if and only if $L(\mathbf{p})\neq 0$.
\end{lemma}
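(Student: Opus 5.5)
The plan is to factor out the GCD and reduce both directions to Lemma \ref{lm-2}(ii), applied over the field $\mathbb{Q}$. Write $C=\gcd(F_1,F_2)$, $U=F_1/C$ and $V=F_2/C$, so that $F_1=CU$, $F_2=CV$ and $\gcd(U,V)=1$ in $\Z[x_1,\dots,x_n,y]$. Since $\LC_y$ is multiplicative, $\LC_y(F_1)=\LC_y(C)\LC_y(U)$ and $\LC_y(F_2)=\LC_y(C)\LC_y(V)$. Hence $\sigma_{\mathbf p}(\LC_y(F_1))\neq 0$ and $\sigma_{\mathbf p}(\LC_y(F_2))\neq 0$ imply that $\LC_y(C)$, $\LC_y(U)$ and $\LC_y(V)$ all stay nonzero at $\mathbf p$. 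Consequently $\sigma_{\mathbf p}$ preserves the $y$-degrees of $C$, $U$ and $V$.

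\emph{($\Leftarrow$)} Suppose $L(\mathbf p)\neq 0$. Then the two leading-coefficient conditions of Definition \ref{def-2} hold, and $R(\mathbf p)\neq 0$.
\begin{itemize}
\item If $\deg_y U>0$ and $\deg_y V>0$, we view $U,V$ in $\mathbb{Q}[x_1,\dots,x_n,y]$. Lemma \ref{lm-2}(ii) then gives $\res_y(\sigma_{\mathbf p}(U),\sigma_{\mathbf p}(V))=R(\mathbf p)\neq 0$, so $\gcd(\sigma_{\mathbf p}(U),\sigma_{\mathbf p}(V))$ has degree $0$ over $\mathbb{Q}$.
\item If, say, $\deg_y U=0$, then $\sigma_{\mathbf p}(U)$ is a nonzero integer, and its GCD with $\sigma_{\mathbf p}(V)$ again has $y$-degree $0$. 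This is the case covered by the convention (\ref{eq-3}).
\end{itemize}
Next we use $\gcd(\sigma_{\mathbf p}(F_1),\sigma_{\mathbf p}(F_2))=\gcd(\sigma_{\mathbf p}(C)\sigma_{\mathbf p}(U),\sigma_{\mathbf p}(C)\sigma_{\mathbf p}(V))$. In $\Z[y]$ this equals $\sigma_{\mathbf p}(C)$ times $\gcd(\sigma_{\mathbf p}(U),\sigma_{\mathbf p}(V))$, up to an integer factor. The latter GCD is an integer, because $\sigma_{\mathbf p}(U)$ and $\sigma_{\mathbf p}(V)$ are coprime over $\mathbb{Q}$ and Gauss' lemma applies. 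Comparing $y$-degrees gives $\deg\sigma_{\mathbf p}(C)=\deg\gcd(\sigma_{\mathbf p}(F_1),\sigma_{\mathbf p}(F_2))$, so $\mathbf p$ is good.

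\emph{($\Rightarrow$)} Suppose $\mathbf p$ is a good point. The two leading-coefficient conditions give $\sigma_{\mathbf p}(\LC_y(F_1)\LC_y(F_2))\neq 0$, so it remains to show $R(\mathbf p)\neq 0$. We have $\sigma_{\mathbf p}(C)\cdot\gcd(\sigma_{\mathbf p}(U),\sigma_{\mathbf p}(V))$ dividing $\gcd(\sigma_{\mathbf p}(F_1),\sigma_{\mathbf p}(F_2))$. Combined with the degree equality, this forces $\deg_y\gcd(\sigma_{\mathbf p}(U),\sigma_{\mathbf p}(V))=0$. If both $U$ and $V$ have positive $y$-degree, Lemma \ref{lm-2}(ii) over $\mathbb{Q}$ turns this into $R(\mathbf p)\neq 0$. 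Otherwise $R=1$ by convention (\ref{eq-3}). Either way $L(\mathbf p)\neq 0$.

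The main subtlety I expect is the passage between $\Z$ and $\mathbb{Q}$. First, coprimality of $U,V$ in $\Z[\X][y]$ must be transferred to coprimality in $\mathbb{Q}(\X)[y]$, so that $R\neq 0$ and Lemma \ref{lm-2} is meaningful; Gauss' lemma handles this. Second, the identity $\gcd(cU',cV')=c\gcd(U',V')$ in $\Z[y]$ is needed for the degree comparison; it is harmless, because only $y$-degrees matter and integer contents do not affect them. Finally, the degenerate cases where $U$ or $V$ is free of $y$ should be checked against the convention (\ref{eq-3}).
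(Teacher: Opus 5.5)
Your proposal is correct and follows essentially the same route as the paper's proof. You split off $C=\gcd(F_1,F_2)$ and use multiplicativity of $\LC_y$ to keep the leading coefficients of $C$, $F_1/C$ and $F_2/C$ nonzero at $\mathbf p$. You then apply Lemma~\ref{lm-2}(ii) to the cofactors in both directions and handle the $y$-free cases by convention~(\ref{eq-3}); your extra care in applying Lemma~\ref{lm-2}(ii) over the field $\mathbb{Q}$ and comparing $\gcd$'s in $\Z[y]$ and $\mathbb{Q}[y]$ only tightens steps the paper passes over silently.
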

\begin{proof}
``$\Longleftarrow$" Since $F_1/\gcd(F_1,F_2)$ and $F_2/\gcd(F_1,F_2)$ are coprime, $R\neq 0$ \cite[p288, Sylvester's Criterion]{Geddes92Algorithms}.
Assume $L(\mathbf{p})\neq 0$. Then $\LC_y(F_1)(\mathbf{p})\neq 0$, $\LC_y(F_2)(\mathbf{p})\neq 0$ and $R(\mathbf{p})\neq 0$. Assume $C=\gcd(F_1,F_2)$.
As $\LC_y(F_1/C)$ is a factor of $\LC_y(F_1)$, $\LC_y(F_1)(\mathbf{p})\neq 0$ implies that $\LC_y(F_1/C)(\mathbf{p})\neq 0$. For the same reason, $\LC_y(F_2/C)(\mathbf{p})\neq 0$. So by (\lowercase\expandafter{\romannumeral2}) of Lemma \ref{lm-2}, $R(\mathbf{p})=\res_y(F_1(\mathbf{p},y)/C(\mathbf{p},y),F_2(\mathbf{p},y)/C(\mathbf{p},y))\neq 0$.

Claim: $\deg_y \gcd(F_1(\mathbf{p},y)/C(\mathbf{p},y), F_2(\mathbf{p},y)/C(\mathbf{p},y))=0$.
Proof of claim: If $\deg_y(F_1/C)=0$ or $\deg_y(F_2/C)=0$, it is obvious. So assume $\deg_y(F_1/C)>0$ and $\deg_y(F_2/C)>0$.
According to (\lowercase\expandafter{\romannumeral2}) of Lemma \ref{lm-2}, $R(\mathbf{p})\neq 0$ implies that
$\deg_y\gcd(F_1(\mathbf{p},y)/C(\mathbf{p},y), F_2(\mathbf{p},y)/C(\mathbf{p},y))=0$. We proved it.
Therefore
 $F_1(\mathbf{p},y)/C(\mathbf{p},y)$ and $F_2(\mathbf{p},y)/C(\mathbf{p},y)$ have only a constant GCD. That is, $\gcd(F_1(\mathbf{p},y),F_2(\mathbf{p},y))$ is equivalent to $C(\mathbf{p},y)$ up to a non-zero constant,
which implies $\deg \sigma_\mathbf{p}(C) =\deg \gcd(\sigma_\mathbf{p}(F_1),\sigma_\mathbf{p}(F_2))$.
So $\mathbf{p}$ is a good point for $F_1,F_2$.

``$\Longrightarrow$" For the other direction, assume $\mathbf{p}$ is a good point for $F_1,F_2$, then $\LC(F_1)(\mathbf{p})\neq 0, \LC(F_2)(\mathbf{p})\neq 0$ and $\deg \sigma_\mathbf{p}(C) =\deg \gcd(\sigma_\mathbf{p}(F_1),\sigma_\mathbf{p}(F_2))$.
 Thus $\deg_y\gcd(F_1(\mathbf{p},y)/C(\mathbf{p},y), F_2(\mathbf{p},y)/C(\mathbf{p},y))=0$. By (\lowercase\expandafter{\romannumeral2}) of Lemma \ref{lm-2}, $R(\mathbf{p})=\res_y(F_1(\mathbf{p},y)/C(\mathbf{p},y),F_2(\mathbf{p},y)/C(\mathbf{p},y))\neq 0$. Therefore $L(\mathbf{p})=R(\mathbf{p})\cdot \LC(F_1)(\mathbf{p})\cdot \LC(F_1)(\mathbf{p})\neq 0$.
\end{proof}\qed

A single $\sigma_\mathbf{p}$ does not usually retain all the information
necessary to solve the problem in the original domain.
But in this paper, we will show that if $\mathbf{p}$ is chosen appropriately, then one image $\gcd(\sigma_\mathbf{p}(F_1),\sigma_\mathbf{p}(F_2))$ is enough to recover $C$.

\section{An algorithm for GCD of polynomials with given terms bound}
In this section, we give a new GCD algorithm for polynomials in $\Z[x_1,\dots,x_n]$ with a given term bound.
Let $A,B\in\Z[x_1,\dots,x_n]$ and $G=\gcd(A,B)$.
Then
$
G=\gcd(\MoCont(A),\MoCont(B))\cdot \gcd( \MoPrim(A),\MoPrim(B))
$ (cf. Def. \ref{def-3}),
where $\gcd(\MoCont(A),\MoCont(B))$ is the monomial content of $G$ and
$\gcd(\MoPrim(A),\MoPrim(B))$ is the monomial primitive part of $G$.
So to compute $G$, our algorithm is divided into three main parts.

\begin{description}
\item[Part 1] Compute the monomial content by computing $\gcd(\MoCont(A),\MoCont(B))$. This part is trivial because all polynomials in the computation are monomials.

\item[Part 2] Compute the monomial primitive part by computing $\MoPrim(G)=\gcd(\MoPrim(A),\MoPrim(B)).$ We explain the framework of this part in more detail in Section \ref{sec-a3} and summarize it as a subroutine algorithm (Algorithm \ref{alg-1} given in Section \ref{sec-32}).

\item[Part 3] Multiply the two parts to get the final result $G$. This part can be converted to the additions of exponents and multiplication of coefficients, as $\MoCont(G)$ is a term.
\end{description}

\subsection{Framework of GCD algorithm for monomial primitive polynomials}\label{sec-a3}
Assume $A,B\in \Z[x_1,\dots,x_n]$ are monomial primitive, so is their GCD.
To compute $G=\gcd(A,B)$, we first find a suitable vector $\mathbf{s}\in\N^n$, so that  the GCD of $F_1:=A_{(\mathbf{s},y)}$ and $F_2:=B_{(\mathbf{s},y)}$,  $G_{(\mathbf{s},y)}=\gcd(F_1,F_2)$ (cf. Lemma \ref{the-5}), is separated w.r.t $y$.
Second, we randomly choose different primes $\mathbf{p}=(p_1,\dots,p_n)$, so that $\deg G_{(\mathbf{s},y)}(\mathbf{p},y)=\deg \gcd(F_1(\mathbf{p},y),F_2(\mathbf{p},y)).$
By Theorem \ref{the-2}, if $p_1,\dots,p_n$ are distinct primes and $p_i$ does not divide any coefficient of $G_{(\mathbf{s},y)}$, then the content of $G_{(\mathbf{s},y)}(\mathbf{p},y)$ is $1$. So we have
$G_{(\mathbf{s},y)}(\mathbf{p},y)={\rm Prim}( \gcd(F_1(\mathbf{p},y),F_2(\mathbf{p},y))).$
Third, if $G=c_1M_1+\cdots+c_tM_t$, where $c_i\in\Z/\{0\}$ and $M_i,i=1,\dots,t$ are distinct monomials, then we have
$G_{(\mathbf{s},y)}(\mathbf{p},y)=c_1M_1(\mathbf{p})y^{k_1}+\cdots+c_tM_t(\mathbf{p})y^{k_t},$
where $k_i,i=1,\dots,t$ are different exponents and $c_iM_i(\mathbf{p})$ is the coefficient of $y^{k_i}$.
The exponents of $M_i$ can be obtained from the factorization of $c_iM_i(\mathbf{p})$ as $p_j,j=1,\dots,n$ are distinct primes and $p_j\nmid c_i$. If $c_iM_i(\mathbf{p})=c_ip_1^{e_{i,1}}\cdots p_n^{e_{i,n}}$, then $M_i=x_1^{e_{i,1}}\cdots x_n^{e_{i,n}}$.  Thus we obtain the exact form of $G=c_1x_1^{e_{1,1}}\cdots x_n^{e_{1,n}}+\cdots+c_tx_1^{e_{t,1}}\cdots x_n^{e_{t,n}}.$

We divide the algorithm into five main steps: 

\begin{description}
\item[Step a:] find a vector $\mathbf{s}\in\N^n$ so that $G_{(\mathbf{s},y)}$ is separated w.r.t $y$ using Theorem \ref{the-1}.

\item[Step b:] randomly choose different primes $p_1,\dots,p_n$ so that $\mathbf{p}=(p_1,\dots,p_n)$ is a $good\ point$ for $F_1,F_2$ (cf. Def.\ref{def-2})
and $p_i$ does not divide any integer coefficient of $G$.
\item[Step c:] compute $U:=\gcd(F_1(\mathbf{p},y),F_2(\mathbf{p},y))$  in   $\Z[y]$ using a univariate GCD algorithm.

\item[Step d:] compute the primitive part of $U$ to obtain $G_{(\mathbf{s},y)}(\mathbf{p},y)={\rm Prim}(U)$.
\item[Step e:] identify the terms of $G$ by dividing the coefficients of $G_{(\mathbf{s},y)}(\mathbf{p},y)$ by the primes $p_1, \dots, p_n$.
\end{description}

\subsection{Algorithm}\label{sec-32}
We give a GCD algorithm that works for monomial primitive polynomials. Algorithm \ref{alg-1} assumes a term bound $T$ for $G$ is given as input.

\begin{algorithm}[H]
\caption{Monomial primitive GCD of $\Z[x_1,\dots,x_n]$.}\label{alg-1}
\begin{algorithmic}[1]
\REQUIRE Monomial primitive polynomials $A,B\in \Z[x_1,\dots,x_n]$; a term bound $T\geq \#\gcd(A,B)$.
\ENSURE $G=\gcd(A,B)$ with probability $\geq \frac34$; or ``Failure".

\IF{$\#A=1$ or $\#B=1$}
\RETURN $1$.
\ENDIF

\STATE Randomly choose $\mathbf{s}=(s_1,\dots,s_n)\in\N^n$ from $[0,\frac92T(T-1)]^n$.\label{alg-1-1}

\STATE Compute $F_1:=A_{(\mathbf{s},y)}=A(x_1y^{s_1},\dots,x_ny^{s_n})/y^{k_1}$ and $F_2:=B_{(\mathbf{s},y)}=B(x_1y^{s_1},\dots,x_ny^{s_n})/y^{k_2}$ for some integers $k_1,k_2$ as defined in Eq. (\ref{eq-1}).\label{alg-1-2}

\STATE Find the first $N:=\max\{112\|\mathbf{s}\|_{\infty}D^2+112D+n-1,56n\lceil Tn(d\log_2 e+\log_2 C)\rceil+n-1\}$ primes and denote them as $\{p'_1,\dots,p'_N\}$.\label{alg-1-3}

\STATE Randomly choose $n$ different primes $\mathbf{p}:=(p_1,\dots,p_n)$ from $\{p'_1,\dots,p'_N\}$.\label{alg-1-4}

\IF{$\LC_y(F_1)(\mathbf{p})= 0$ or $\LC_y(F_2)(\mathbf{p})=0$)}\label{alg-1-31}
\RETURN ``Failure".
\ENDIF

\STATE Compute   $U:=\gcd(F_1(\mathbf{p},y),F_2(\mathbf{p},y))$ by univariate GCD algorithm (Algorithm \ref{alg-3}). \label{alg-1-5}

\IF{ $U=$``Failure"} \RETURN ``Failure";
\ELSE
\STATE assume
${\rm Prim}(U):=a_1y^{k_1}+\cdots+a_ty^{k_t}.$
\ENDIF

\FOR{$i=1$ to $t$}
\STATE factor the integer $a_i=c_ip_1^{e_{i,1}}\cdots p_n^{e_{i,n}}$ using only trial division by $p_1,\dots,p_n$.  \label{alg-1-6}
\ENDFOR

\STATE Let $G:=\sum_{i=1}^tc_ix_1^{e_{i,1}}\cdots x_n^{e_{i,n}}$.\label{alg-1-12}

\STATE Compute the exponents of $y$ in $G_{(\mathbf{s},y)}$.
\IF{$G_{(\mathbf{s},y)}$ has the same exponents in $y$ as $\Prim(U)$}
\RETURN $G$;\label{alg-1-23}
\ELSE \RETURN ``Failure". \label{alg-1-8}
\ENDIF
\end{algorithmic}
\end{algorithm}

\begin{theorem}
Algorithm \ref{alg-1} works correctly as specified.
\end{theorem}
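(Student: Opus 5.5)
We have to show two things: whenever Algorithm \ref{alg-1} returns a polynomial it equals $G=\gcd(A,B)$ except on an event of small probability, and it succeeds with probability at least $\frac34$. The plan is to isolate three bad events, bound each probability, and then check that outside them every step behaves as described in Section \ref{sec-a3}. The trivial branch is handled first. If $\#A=1$, then $A$ is a monomial primitive monomial, hence $A=\pm1$, so the GCD is $1$; the same holds for $B$.

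The bad events are as follows.

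\textbf{E1: $G_{(\mathbf{s},y)}$ is not separated w.r.t.\ $y$.} By Theorem \ref{the-1} with $T\ge\#G$, this has probability at most $1/9$.

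\textbf{E2: $\mathbf{p}$ is not a good point for $F_1,F_2$.} By Lemma \ref{the-13}, $\mathbf{p}$ is good iff $L(\mathbf{p})\ne0$, where
\[
L=\res_y\!\left(F_1/G_{(\mathbf{s},y)},\,F_2/G_{(\mathbf{s},y)}\right)\LC_y(F_1)\LC_y(F_2),
\]
using Lemma \ref{the-5} to identify $\gcd(F_1,F_2)=G_{(\mathbf{s},y)}$. I will bound $\deg L$. Lemma \ref{the-14}, applied to the cofactors (whose degrees are at most $D$), gives a resultant degree of at most $2\|\mathbf{s}\|_\infty D^2$. The leading coefficients are monomials times integers of degree at most $D$ each. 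So $\deg L\le 2\|\mathbf{s}\|_\infty D^2+2D$. Since $\mathbf{p}$ consists of distinct elements chosen from $N$ primes, Lemma \ref{lm-8} gives probability at most $\deg L/(N-n+1)\le 2/112=1/56$ by the first term in the definition of $N$.

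\textbf{E3: some $p_i$ divides some coefficient $c_j$ of $G$.} By Mignotte-type bounds, each $|c_j|$ is at most $e^{nd}C$ up to the factor absorbed in the definition. Hence $\prod_j|c_j|$ has at most $Tn(d\log_2e+\log_2C)$ prime factors among the $p'_k$. Call this set of bad primes $W$. The probability that one of the $n$ distinct chosen primes lies in $W$ is at most $n|W|/(N-n+1)\le 1/56$ by the second term of $N$.

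A union bound gives failure probability at most $1/9+2/56<1/4$. This estimate on E3, namely getting a bound for the coefficients of $G$ with the right exponent $Tn(d\log_2 e+\log_2 C)$, is the step I expect to need most care. I would use the bound $\|G\|_\infty\le (d+1)^n2^{nd}C$-style estimates (the divisor bound in \cite{GaGer13}) and match it to the form in $N$, adjusting constants if necessary.

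Next I check correctness outside E1 $\cup$ E2 $\cup$ E3.
\begin{enumerate}
\item Goodness of $\mathbf{p}$ means the leading-coefficient test passes, and $U=\kappa\,G_{(\mathbf{s},y)}(\mathbf{p},y)$ for some integer $\kappa$ (assuming Algorithm \ref{alg-3} returns the true GCD).
\item Separation plus E3 failing means Theorem \ref{the-2} applies, so $\Prim(U)=\pm G_{(\mathbf{s},y)}(\mathbf{p},y)$, with the sign fixed by unit normality.
\item By separation, each coefficient is $a_i=c_iM_i(\mathbf{p})$ with $p_j\nmid c_i$. Trial division then recovers $c_i$ and the exponents uniquely, so line \ref{alg-1-12} produces $G$ and the final exponent check passes.
\end{enumerate}

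Conversely, when the check passes but a bad event occurred, the output could be wrong. This is permitted, since the specification only requires output $G$ with probability $\ge\frac34$. So the total probability of returning $G$ is at least $1-\frac14=\frac34$.
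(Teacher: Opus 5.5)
Your outline (separate $G$ via $\mathbf{s}$, pick a good $\mathbf{p}$ with no $p_i$ dividing a coefficient of $G$, then read off exponents by trial division) matches the paper. Applying Lemma~\ref{the-14} to the cofactors $(A/G)_{(\mathbf{s},y)}$ is actually more careful than the paper. The gap is in the probability accounting.

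In item 1 of your correctness check you \emph{assume} that Algorithm~\ref{alg-3} returns the true univariate GCD. That subroutine is itself randomized: it draws a prime $p\in(N,2N]$ and Hensel-lifts modulo $p$. Lemmas~\ref{lm-9} and~\ref{lm-15} only guarantee it succeeds with probability $\ge\frac78$, and your final estimate $1-\frac14$ never charges for this failure. It cannot simply be added to your union bound either: $\frac19+\frac1{56}+\frac1{56}+\frac18=\frac{137}{504}>\frac14$. Even union-bounding E1, E2, E3 and then multiplying by $\frac78$ gives only $\frac{215}{252}\cdot\frac78=\frac{1505}{2016}<\frac34$. The constants in Algorithm~\ref{alg-1} are tuned so that $\frac34$ is reached exactly, and only by a product.

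The missing step is to condition along the order of the random choices instead of union-bounding across them.
\begin{itemize}
\item E1 depends only on $\mathbf{s}$, so $\Pr[\neg\mathrm{E1}]\ge\frac89$.
\item For every \emph{fixed} $\mathbf{s}$, your bounds $\Pr[\mathrm{E2}\mid\mathbf{s}]\le\frac1{56}$ and $\Pr[\mathrm{E3}\mid\mathbf{s}]\le\frac1{56}$ still hold, because $N$ is defined from $\|\mathbf{s}\|_\infty$ and $T$. Hence $\Pr[\neg\mathrm{E2}\wedge\neg\mathrm{E3}\mid\mathbf{s}]\ge\frac{27}{28}$.
\item For every fixed $(\mathbf{s},\mathbf{p})$ with $\mathbf{p}$ good, Algorithm~\ref{alg-3} draws its prime independently and succeeds with probability $\ge\frac78$. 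Lemma~\ref{lm-9} holds for arbitrary fixed $\mathbf{s},\mathbf{p}$.
\end{itemize}
Chaining gives $\frac89\cdot\frac{27}{28}\cdot\frac78=\frac34$, which is exactly the paper's argument.

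Two smaller points.
\begin{itemize}
\item $\LC_y(F_1)$ need not be a monomial. $\mathbf{s}$ is only guaranteed to separate $G$, not $A$, so several terms of $A$ can share the top $y$-weight. Only $\deg\LC_y(F_i)\le D$ is needed, and that still holds.
\item For E3, Lemma~\ref{lm-1} gives $\|G\|_\infty\le e^{nd}C$ directly, and this fits under the second term of $N$. You cannot ``adjust constants'' here, since $N$ is fixed by the algorithm, but you also do not need to.
\end{itemize}
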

\begin{proof}
As shown in Section \ref{sec-a3}, Algorithm \ref{alg-1} always returns the correct GCD if the following three conditions are met.

$\bullet$ The vector $\mathbf{s}=(s_1,\dots,s_n)$ chosen in Step \ref{alg-1-1} is a ``good" vector that separates $G(x_1y^{s_1},\dots,x_ny^{s_n})$ w.r.t. $y$.

$\bullet$ The point $\mathbf{p}=(p_1,\dots,p_n)$ chosen in Step \ref{alg-1-4} is  a $good\ point$ for $F_1$ and $F_2$, and no prime $p_i$
divides any integer coefficient of $G$.

$\bullet$ The univariate GCD $\gcd(F_1(\mathbf{p},y),F_2(\mathbf{p},y))$ is computed correctly by Algorithm \ref{alg-3} in Step \ref{alg-1-5}.

\noindent
We will prove the following three corresponding probabilities.

(1) $\mathbf{s}$ is a ``good" vector for  $G$ with probability $\geq \frac89$.

(2) $\mathbf{p}=(p_1,\dots,p_n)$ is a $good\ point$ for $F_1$ and $F_2$ and $p_i$ does not divide coefficients of $G$ with probability $\geq \frac{27}{28}$.

(3) The univariate GCD $\gcd(F_1(\mathbf{p},y),F_2(\mathbf{p},y))$ is computed correctly with probability $\geq \frac78$.

Thus our algorithm returns the correct GCD with probability $\geq \frac89\cdot \frac{27}{28}\cdot\frac{7}{8}=\frac34$.

According to Theorem \ref{the-1}, (1) is valid. According to Lemma \ref{lm-15}, (3) is valid.
Let's prove the probability of (2).

Let $R=\res_{y}(F_1/\gcd(F_1,F_2),F_2/\gcd(F_1,F_2))$
and $L=\LC_y(F_1)\cdot \LC_y(F_2)\cdot R.$ So $L \in \Z[x_1,\dots,x_n]$
and in Step \ref{alg-1-4}, if $\mathbf{p}$ satisfies $L(\mathbf{p})\neq 0$, then $\mathbf{p}$ is a $good\ point$ for $F_1,F_2$.
By Lemma \ref{the-14}, $\deg R\leq 2\|\mathbf{s}\|_{\infty}\deg A\deg B\leq 2\|\mathbf{s}\|_{\infty}D^{2}$, so we have $\deg L\leq\deg(\LC_y(F_1))+\deg(\LC_y(F_2))+\deg R\leq 2D+2\|\mathbf{s}\|_{\infty}D^2$.
As $\mathbf{p}$ is chosen from $\{p'_1,\dots,p'_N\}$, by Lemma \ref{lm-8},  $\mathbf{p}$ is not a $good\ point$ for $F_1$ and $F_2$ with probability $\leq \frac{\deg L}{N-n+1}\leq \frac{2(D+\|\mathbf{s}\|_{\infty}D^2)}{112(D+\|\mathbf{s}\|_{\infty}D^2)}\leq \frac{1}{56}.$
Let $K:=\lceil T(nd\log_2 e+\log_2 C)\rceil$. By Lemma \ref{lm-1}, the coefficient bound of $G$ is $e^{nd}C$. As there are at most $T$ terms in $G$,  there are at most $K$ different primes in all coefficients of $G$. So the probability that $p_i$ does not divide any coefficient of $G$ is $\geq \frac{N-K}{N}\cdot \frac{N-K-1}{N-1}\cdots \frac{N-K-n+1}{N-n+1}=(1-\frac{K}{N})(1-\frac{K}{N-1})\cdots (1-\frac{K}{N-n+1})$$
$$\geq (1-\frac{K}{N-n+1})^n\geq 1-\frac{nK}{N-n+1}.$
So the failure probability is $\leq \frac{nK}{N-n+1}\leq \frac{1}{56}$.
Therefore, the probability that the selected $\mathbf{p}=(p_1,\dots,p_n)$ satisfies (2) is $\geq 1-\frac{1}{56}-\frac{1}{56}=\frac{27}{28}.$
The correctness is proved. \qed
\end{proof}

Now we analyse the complexity of Algorithm \ref{alg-1}.

\begin{theorem}\label{the-7}
The complexity of Algorithm \ref{alg-1} is $O^\sim(n^2T^2d^2\log C+n^2dT_{\inp}\log T\\ \log C)$ bit operations, where $T_{\inp}:=\#A+\#B$.
\end{theorem}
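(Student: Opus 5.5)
We bound the cost of Algorithm \ref{alg-1} step by step and add the bounds up. Step \ref{alg-1-1} costs $O(n\log T)$ random bits. For Step \ref{alg-1-2}, each of the $T_{\inp}$ terms gets its new $y$-exponent $\sum_i s_ie_i$. This is $n$ products of numbers of $O(\log T)$ and $O(\log d)$ bits, so the step costs $O^\sim(nT_{\inp}\log T)$, plus a pass to subtract the minimal exponent.

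For Steps \ref{alg-1-3}--\ref{alg-1-4}, note that $\|\mathbf{s}\|_\infty\le \frac92T^2$ and $D\le nd$. Hence $N\in O(T^2n^2d^2+n^2Td+nT\log C)$. The first $N$ primes are found with a sieve in $O^\sim(N)$ bit operations, and each has size $O(N\log N)$, so $\log p_i\in O(\log N)=O^\sim(1)$. Choosing $n$ distinct indices costs $O^\sim(n)$. The main question is whether $O^\sim(N)$ fits the target; it does, since $n^2T^2d^2\le n^2T^2d^2\log C$ and $nT\log C$ is dominated.

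Steps \ref{alg-1-31} and \ref{alg-1-5} are the heart of the analysis. We must evaluate $F_1(\mathbf{p},y)$ and $F_2(\mathbf{p},y)$. Each term $c\,x^{\mathbf e}y^k$ becomes $c\prod p_i^{e_i}y^k$, an integer of $O(nd\log N+\log C)$ bits. Computing each power product by repeated squaring and a product tree costs $O^\sim(nd+\log C)$. Over all terms this is $O^\sim(T_{\inp}(nd+\log C))$, which is within $n^2dT_{\inp}\log T\log C$. The leading-coefficient test is included in this cost.

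The resulting univariate polynomials have degree $\le \|\mathbf{s}\|_\infty D\in O(T^2nd)$ and height $O^\sim(nd+\log C)$. Running a dense modular univariate GCD on these would give roughly $T^2nd\cdot(nd+\log C)$, which is too large by factors when multiplied by the height needed for the GCD coefficients. So I would invoke the cost bound of Algorithm \ref{alg-3} (the univariate GCD subroutine, with its analysis in Lemma \ref{lm-15} and nearby). I expect it to be sparse or output-sensitive and to exploit that the output $U$ has only $T$ terms, each of height $O^\sim(nd+\log C)$ with a Mignotte-type bound from Lemma \ref{lm-1}. The work would then be to plug in degree $O(T^2nd)$, input sparsity $T_{\inp}$ and heights, and check that the result is $O^\sim(n^2T^2d^2\log C+n^2dT_{\inp}\log T\log C)$. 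I expect this to be the main obstacle: it requires matching the subroutine's stated complexity, likely a modular approach over several small primes where each image costs about degree times the number of primes, so that degree $T^2nd$ times $nd\log C$ bits gives the first term.

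The remaining steps are cheap. Computing $\Prim(U)$ takes $T$ integer gcds of $O^\sim(nd+\log C)$-bit numbers. The trial division in Step \ref{alg-1-6} divides each $a_i$ repeatedly by $p_1,\dots,p_n$. The total number of successful divisions is at most $nd$ per term, and the failed divisions number $n$ per term. Each division costs $O^\sim(nd+\log C)$, giving $O^\sim(Tn(nd+\log C))$. The final check recomputes the $y$-exponents of $G_{(\mathbf{s},y)}$ in $O^\sim(nT\log T)$ and compares sorted lists. Summing all the bounds gives the claim.
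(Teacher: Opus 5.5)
Your step-by-step accounting follows the paper's, and your bounds for Steps 1--4, for evaluating $F_i(\mathbf{p},y)$ and for the final checks are fine. The product-tree evaluation is even slightly sharper than the paper's $O^\sim(nT_{\inp}\log d\log(CP^D))$. The gap is in the step that produces the leading term, the univariate GCD in Step~\ref{alg-1-5}. You never bound it, and both premises you offer about it are false.

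First, Algorithm~\ref{alg-3} is not sparse or output-sensitive. It is a dense Hensel-lifting GCD starting from a single small prime $p\in(N,2N]$. Its degree $SD\in O(T^2nd)$ is already polynomial in the parameters, so sparsity buys nothing here.

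Second, a dense method is not too expensive. Its cost is (degree)$\times$(bit height), not a product of input height and output height. That product only appears if you reduce the full inputs separately modulo each of many primes with naive arithmetic. Concretely, Algorithm~\ref{alg-3} costs the following:
\begin{enumerate}
\item Reducing the at most $SD+1$ coefficients of $H_i=F_i(\mathbf{p},y)$, each of $O(n\log(d+1)+D\log P+\log C)$ bits, modulo $p$ costs $O^\sim(nSD\log d+SD^2\log P+SD\log C)$.
\item The gcd, the cofactors and the coprimality tests in $\F_p[y]$, including the expected number of trials for $\alpha$, cost $O^\sim(SD\log p)$.
\item By Lemma~\ref{lm-12}, the lift to $p^\ell$ costs $O^\sim(SD\,\ell\log p)$. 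Here $\ell\log p\in O^\sim(nd+D\log P+\log C)$, using the height bound of Lemma~\ref{lm-11} and $|\Delta|\le\|H_1\|_\infty$. Taking the primitive part costs the same.
\end{enumerate}
The total is Lemma~\ref{lm-13}, namely $O^\sim(nSdD+SD^2\log P+SD\log C)$, and this is exactly what the paper cites for Step~\ref{alg-1-5}.

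Now substitute $S\le\frac92T^2$, $D\le nd$ and $\log P\in O(\log N)\subseteq O^\sim(1)$. This gives $O^\sim(n^2T^2d^2+nT^2d\log C)\subseteq O^\sim(n^2T^2d^2\log C)$. So the first term of the theorem is just your own dense estimate $T^2nd\cdot(nd+\log C)$, which you should not have discarded; the anticipated "main obstacle" does not exist.

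Two slips elsewhere are harmless:
\begin{enumerate}
\item The second entry of $N$ is $O(n^2T(d+\log C))$, not $O(nT\log C)$.
\item In Step~\ref{alg-1-6} you count at most $nd$ successful and $n$ failed divisions per term, each on an integer of $O^\sim(nd+\log C)$ bits. That gives $O^\sim(Tnd(nd+\log C))$, not $O^\sim(Tn(nd+\log C))$.
\end{enumerate}
Both corrected bounds still lie within the claimed complexity.
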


\begin{proof}
In Step \ref{alg-1-2}, the complexity is $O^\sim(nT_{\inp}(\log T+\log D)+T_{\inp}\log C)$ bit operations.
In Step \ref{alg-1-3}, finding the first $N$ primes costs $O^\sim(N)$ bit operations \cite[P.500, Them. 18.10]{GaGer13}, which is $O^\sim(T^2D^2+n^2Td+n^2T\log C)$ bit operations.
In Step \ref{alg-1-5}, polynomials $F_1(\mathbf{p},y)$ and $F_2(\mathbf{p},y)$ are computed which costs $O^\sim(nT_{\inp}\log d\log (CP^D))$ bit operations where $P:=\max\{p_1,\dots,p_n\}$. The complexity of computing the univariate GCD $U(y)$ in Step \ref{alg-1-5} is $O^\sim(nSdD+SD^2\log P+SD\log C)$ bit operations by Lemma \ref{lm-13}, where $S:=\max\{s_1,\dots,s_n\}$. As $P$ is $O^\sim(T^2D^2+n^2Td+n^2T\log C)$ and $S$ is $O(T^2)$, the bit complexity is $O^\sim(n^2T^2d^2\log C)$.
In Step \ref{alg-1-6}, we factor $a_i$ by trivial division which costs $O^\sim(nTd\log (e^{nd}CP^D))$ bit operations, that is,  $O^\sim(n^2Td^2\log C)$ bit operations.
\end{proof}\qed

\vspace{11pt}
Our algorithm is randomized of Monte Carlo type. If $H$ is the output of algorithm \ref{alg-1}, then $H$ may not be the correct $\gcd(A,B)$. But if we verify 1: $H|A$ and $H|B$; 2: $\deg_{x_i}H=\deg_{x_i}\gcd(A,B),i=1,\dots,n$, then $H$ is always the correct GCD.
We state this as the following theorem.

\begin{theorem}\label{the-15}
Assume $A,B\in\Z[x_1,\dots,x_n]$ are monomial primitive polynomials as the inputs of Algorithm \ref{alg-1}, and $H$ is the output of Algorithm \ref{alg-1}. If $H|A$ and $H|B$ and
$\deg_{x_i}H=\deg_{x_i}\gcd(A,B),i=1,2,\dots,n$, then $H$ is a GCD of $A,B$.
\end{theorem}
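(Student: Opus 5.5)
Let $G=\gcd(A,B)$. Since $H\mid A$ and $H\mid B$, the definition of GCD (Definition \ref{def-0}) gives $H\mid G$, so $G=HQ$ for some $Q\in\Z[x_1,\dots,x_n]$. The plan is to show that $Q$ is a unit, i.e.\ $Q=\pm1$. I will then use that $H$ is produced by Algorithm \ref{alg-1} (or otherwise normalized) to conclude that $H$ is similar to $G$, hence is a GCD in the sense of Definition \ref{def-0}.

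\emph{Step 1: $Q$ is an integer.} Degrees in each variable are additive for products over the integral domain $\Z$: $\deg_{x_i}G=\deg_{x_i}H+\deg_{x_i}Q$. By hypothesis $\deg_{x_i}H=\deg_{x_i}G$, so $\deg_{x_i}Q=0$ for every $i$. Hence $Q$ is a nonzero constant $c\in\Z$.

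\emph{Step 2: $c=\pm1$.} This step uses monomial primitivity. As observed at the start of the preliminary results, every factor of a monomial primitive polynomial is monomial primitive. Since $A$ is monomial primitive and $G\mid A$, the polynomial $G$ is monomial primitive, so $\Cont(G)=1$. But $G=cH$ gives $\Cont(G)=|c|\cdot\Cont(H)$, which forces $|c|=1$. Alternatively, $cH\mid A$ already forces $c$ to divide every coefficient of $A$, and $\Cont(A)=1$.

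\emph{Step 3: unit normality.} We have $H=\pm G$. The output of Algorithm \ref{alg-1} is assembled from $\Prim(U)$, whose sign convention (positive leading coefficient) carries over to $\LC(H)$. So I expect $\LC(H)>0$ and hence $H=G$. If this normalization is not guaranteed, the conclusion still holds in the weaker sense that $H\sim G$, i.e.\ $H$ is ``a'' GCD as the statement phrases it.

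Nothing here is a real obstacle. The only care needed is in Step 2, to justify that the constant cannot be a nontrivial integer. This is exactly where the hypothesis that $A$ and $B$ are monomial primitive is used; without it, integer contents could make $H$ a proper divisor of $G$ with the same partial degrees.
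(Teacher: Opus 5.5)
Your proof is correct and follows the paper's argument. In both, $H\mid G$, the equal partial degrees force $G=\kappa H$ with $\kappa\in\Z$, and primitivity of $G$ (inherited from $A$) gives $\kappa=\pm1$; so $H\sim G$, which is all the paper proves.

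One caveat concerns your Step 3. The expectation that $\LC(H)>0$ is inherited from $\Prim(U)$ is not justified, because the highest power of $y$ in $G_{(\mathbf{s},y)}$ need not come from the lexicographically leading monomial of $G$. For example, take $G=x_2-x_1^5$ with $\mathbf{s}=(1,1)$: a positive-leading-coefficient $\Prim(U)$ yields $H=-G$. So you should rely only on your fallback conclusion $H\sim G$.
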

\begin{proof}
$H|A$ and $H|B$ imply $H|\gcd(A,B)=G$. As $\deg_{x_i}H=\deg_{x_i}\gcd(A,B)$ for all $i=1,\dots,n$, there exists an integer $\kappa$ that makes $G=\kappa H$. Because $A,B$ are both primitive, so is $G$, therefore $\kappa = \pm 1$.  We proved it. \qed
\end{proof}

The  degree condition in Theorem \ref{the-15} is necessary because our new substitution $x_i \rightarrow p_i y^{s_i}$, $i=1,2,\dots,n$ may cause a factor of the gcd $G$ to vanish.  We give an example to illustrate this.

\begin{example}
    Let $A=B=(5x_1 - 3x_2+1)(2x_1x_2+7)$.  If Algorithm \ref{alg-1} chooses $s = [1,1]$ in Step \ref{alg-1-1} and ${\bf p}=[3,5]$ in Step \ref{alg-1-4} then the first factor $5x_1-3x_2+1$ is mapped to $1$ and the second factor $2x_1x_2+7$ is mapped to $30y^2+7$ which is separated w.r.t. $y$ and Algorithm \ref{alg-1} will output $G=2x_1x_2+7$ in Step \ref{alg-1-23}.
\end{example}

\subsection{Complexity analysis of univariate polynomials GCD}

We will explain Part $\mathbf{c}$ in more detail.
To compute $\gcd(F_1(\mathbf{p},y),F_2(\mathbf{p},y))$,  any univariate GCD algorithm can be used, such as 1: modular GCD in $\Z[x]$: big prime version \cite[p166, Alg. 6.34]{GaGer13};  2: modular GCD in $\Z[x]$: small primes version \cite[p170, Alg. 6.38]{GaGer13}; 3: Hensel lifting.
In our algorithm, due to the large coefficients of $F_1(\mathbf{p},y),F_2(\mathbf{p},y)$, we choose the Hensel lifting method to do this. This is because finding a big prime has a large complexity, whereas the Hensel lifting can start with a small prime.
The small primes version of the modular GCD algorithm has the same advantage, and in fact, it computes faster in practice. We did not choose it because it requires multiple primes and the analysis of the failure probability is more complex.

We have the following lemmas.
\begin{lemma}\label{lm-10}
Let $F,G,H\in\F_p[x]$, where $p$ is a prime and $\F_p$ is a finite field. Assume the GCD of $F,G,H$ is $1$. Then there are at most $\deg(F)$ elements $\alpha\in \F_p$, making $\gcd(F,G+\alpha H)\neq 1$.
\end{lemma}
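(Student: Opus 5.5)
The plan is to attach to each value of $\alpha$ that makes $\gcd(F,G+\alpha H)\neq 1$ an irreducible factor of $F$, and to show that a given irreducible factor can serve at most one $\alpha$. The count then follows because $F$ has at most $\deg(F)$ distinct monic irreducible factors. I will assume $F\neq 0$; if $F$ is a nonzero constant, every gcd with $F$ is $1$ and there is nothing to prove.

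First, let $\alpha\in\F_p$ be such that $\gcd(F,G+\alpha H)\neq 1$. This gcd has positive degree, so it has a monic irreducible factor $q\in\F_p[x]$. Then $q\mid F$ and $q\mid G+\alpha H$.

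Second, fix a monic irreducible factor $q$ of $F$. Work in the field $K=\F_p[x]/(q)$ and write $\bar G,\bar H$ for the images of $G,H$ in $K$.
\begin{itemize}
\item If $\bar H=0$, then $q\mid G+\alpha H$ forces $\bar G=0$. In that case $q$ divides $F$, $G$ and $H$, which contradicts $\gcd(F,G,H)=1$. So no $\alpha$ can use such a $q$.
\item If $\bar H\neq 0$, then $\bar G+\alpha\bar H=0$ in the field $K$ gives $\alpha=-\bar G\bar H^{-1}$. Since $\F_p$ embeds in $K$ via constants, at most one $\alpha\in\F_p$ satisfies this.
\end{itemize}
Hence each monic irreducible factor of $F$ corresponds to at most one bad $\alpha$.

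Third, choosing one such $q$ for each bad $\alpha$ gives an injective map from the bad values to the monic irreducible factors of $F$. The number of distinct monic irreducible factors of $F$ is at most $\deg(F)$, which gives the bound.

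The only delicate point is the case $\bar H=0$. This is exactly where the hypothesis $\gcd(F,G,H)=1$ is used, so that case needs to be handled carefully. Everything else is routine.
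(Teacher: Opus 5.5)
Your proof is correct and follows essentially the same route as the paper. The paper works with the distinct roots $\beta_i$ of $F$ in a splitting field, uses $\gcd(F,G,H)=1$ to show $H(\beta_i)\neq 0$, and concludes that $\alpha=-G(\beta_i)/H(\beta_i)$ is forced; your residue fields $\F_p[x]/(q)$ play the role of those roots (one field per class of conjugate roots), which even gives the slightly sharper count by distinct irreducible factors of $F$.
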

\begin{proof}
Assume $\mathcal{E}$ is the splitting field of $F$ over $\F_p$. Let $\beta_1,\dots,\beta_k$ be all the different roots of $F$ in $\mathcal{E}$, where $k\leq \deg(F)$. Assume $r=\gcd(F,G+\alpha H)\neq 1$.
As $r$ is a factor of $F$, there exists some $\beta_i$ such that $r(\beta_i)=0$.
Thus $G(\beta_i)+\alpha H(\beta_i)=0$.

We claim that $H(\beta_i)\neq 0$.
Towards a contradiction assume $H(\beta_i)=0$, then $G(\beta_i)\neq 0$ as $F,G,H$ have no nontrivial common factor. Thus $G(\beta_i)+\alpha H(\beta_i)\neq 0$, which is a contradiction.
So $\alpha=-\frac{G(\beta_i)}{H(\beta_i)}$. As there are at most $k$ different $\beta_i$ and $k\leq \deg (F)$, we proved it.
\end{proof}\qed

\vspace{5pt}
This lemma is used to construct two coprime polynomials from three polynomials with no non-trivial common factor.  This method will be used in Step \ref{alg-3-131} of Algorithm \ref{alg-3} and was first introduced by Wang \cite{Wang80b}.

\begin{lemma}\label{lm-1}\cite{gelfond2015} Let $F$ be a polynomial in $\Z[x_1,\dots,x_n]$ and let $d_i$ be the degree of $F$ in $x_i$. If $G$ is any factor of $F$
over $\Z$ then $\|G\|_\infty\leq e^{d_1+\cdots+d_n}\|F\|_\infty$ where $e$ is the base of natural logarithm.
\end{lemma}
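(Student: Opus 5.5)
The statement is the classical Gelfond (Mignotte-type) factor bound, and I will prove it through the Mahler measure. For $F\in\mathbb{C}[x_1,\dots,x_n]$ I will work with
\[
M(F)=\exp\Bigl(\int_{[0,1]^n}\log|F(e^{2\pi i\theta_1},\dots,e^{2\pi i\theta_n})|\,d\theta\Bigr).
\]
The argument rests on three properties of this measure.

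\begin{enumerate}
\item \textbf{Multiplicativity.} We have $M(GH)=M(G)M(H)$, because the logarithm of the absolute value of a product is the sum of the logarithms.
\item \textbf{Integer factors are not too small.} For $H\in\Z[x_1,\dots,x_n]$ nonzero, $M(H)\geq 1$.
\item \textbf{Comparison with the coefficients.} For $G$ with partial degrees $g_i$,
\[
\|G\|_\infty\leq \binom{g_1}{\lfloor g_1/2\rfloor}\cdots\binom{g_n}{\lfloor g_n/2\rfloor}M(G),
\qquad
M(F)\leq \|F\|_2\leq \sqrt{\textstyle\prod_i(d_i+1)}\,\|F\|_\infty .
\]
\end{enumerate}

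With these in hand, write $F=GH$ over $\Z$. Properties 1 and 2 give $M(G)\leq M(F)$. Combining this with property 3, and using $g_i\leq d_i$,
\[
\|G\|_\infty\leq \prod_i\binom{d_i}{\lfloor d_i/2\rfloor}\sqrt{d_i+1}\,\|F\|_\infty .
\]
It then remains to check the elementary inequality $\binom{m}{\lfloor m/2\rfloor}\sqrt{m+1}\leq e^{m}$ for every integer $m\geq0$. It holds trivially at $m=0$. For $m\geq1$, use $\binom{m}{\lfloor m/2\rfloor}\leq 2^m$ and $\sqrt{m+1}\leq (e/2)^m$; the latter holds since $(e/2)^2>1.8$ and the ratio grows geometrically. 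Taking the product over $i$ gives $\|G\|_\infty\leq e^{d_1+\cdots+d_n}\|F\|_\infty$.

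For property 2, I reduce to the univariate case. Jensen's formula gives $M(h)=|\mathrm{lc}(h)|\prod\max(1,|\alpha_j|)\geq1$ for a nonzero $h\in\Z[x]$. To pass to several variables, apply a Kronecker substitution $x_i\mapsto x^{N^{i-1}}$ with $N$ large. This sends $H$ to an integer univariate polynomial, and $M$ of the substituted polynomial tends to $M(H)$ as $N\to\infty$ (Boyd's lemma). Alternatively, iterate Jensen one variable at a time: $\log M(H)$ equals the integral over $x_2,\dots,x_n$ of $\log M_{x_1}(H)$, and $M_{x_1}(H)\geq|\mathrm{lc}_{x_1}(H)|$, which reduces the claim to the leading coefficient in fewer variables. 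I prefer the second route since it avoids the limit.

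The second half of property 3 is immediate: apply Jensen's inequality to $\log$, which gives $M\leq\|\cdot\|_{L^2}$ on the torus, and then Parseval, which identifies that $L^2$ norm with $\|F\|_2$. The main obstacle is the first half, the coefficient-versus-measure bound. In one variable it follows from Vieta's formulas: each coefficient is $\mathrm{lc}$ times an elementary symmetric function of the roots, which is at most $\binom{g}{k}\prod\max(1,|\alpha_j|)$. For $n$ variables I would induct. Write $G=\sum_k G_k(x_2,\dots,x_n)x_1^k$. For each fixed point of the torus in $x_2,\dots,x_n$, the univariate bound gives $|G_k|\leq\binom{g_1}{k}M_{x_1}(G)$ pointwise. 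The technical point is that one cannot simply take a supremum of $M_{x_1}$ over the torus. Instead I would apply the univariate Vieta bound to the polynomial in $x_1$ whose coefficients are the Mahler measures in the remaining variables, so that the inductive hypothesis is carried over the other coordinates. This is where I expect most of the care to be needed.
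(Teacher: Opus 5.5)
The paper gives no proof of this lemma; it is quoted from Gelfond. Your Mahler-measure route is the standard modern proof of Gelfond's lemma, and its skeleton is sound. You get $M(G)\le M(F)$ from multiplicativity and $M(H)\ge 1$, and your iterated-Jensen argument for $M(H)\ge 1$ is fine. You also have $M(F)\le\|F\|_2\le\sqrt{\prod_i(d_i+1)}\,\|F\|_\infty$, and $g_i\le d_i$ together with the monotonicity of $\binom{m}{\lfloor m/2\rfloor}$. Two steps, however, do not work as written.

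\textbf{The multivariate coefficient bound.} You rightly identify this as the crux. Applying the univariate Vieta bound to $\widetilde G=\sum_k M(G_k)x_1^k$ gives $M(G_k)\le\binom{g_1}{k}M(\widetilde G)$. To finish you would then need $M(\widetilde G)\le M(G)$, and this is false in general. Take $G=8x_1^2+(5+26x_2+5x_2^2)x_1+8x_2^2$. Then $\widetilde G=8x_1^2+25x_1+8$, with $M(\widetilde G)\approx 22.1$. On the other hand, for $x_2=e^{i\phi}$ the fibre polynomial in $x_1$ has roots of moduli $R$ and $1/R$, where $R+1/R=(26+10\cos\phi)/8$, and averaging $\log(8R)$ gives $M(G)\approx 20.7$.

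The missing idea is to average the logarithm of your pointwise inequality $|G_k(z)|\le\binom{g_1}{k}M_{x_1}(G(\cdot,z))$ over $z\in\mathbb{T}^{n-1}$. By Fubini, $\int\log M_{x_1}(G(\cdot,z))\,dz=\log M(G)$, which is the same identity you already use for property 2. This gives $M(G_k)\le\binom{g_1}{k}M(G)$. Now apply the inductive hypothesis to $G_k$, whose degrees in $x_2,\dots,x_n$ are at most $g_2,\dots,g_n$. The result is $|a_{\mathbf k}|\le\prod_i\binom{g_i}{k_i}M(G)$, and no supremum over the torus is ever needed.

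\textbf{The elementary inequality at $m=1$.} The claim $\sqrt{m+1}\le(e/2)^m$ fails at $m=1$, since $\sqrt2>e/2$. In fact $2\sqrt2>e$, so the bound $\binom{m}{\lfloor m/2\rfloor}\le 2^m$ is too lossy there. Check $m=1$ directly: $\binom{1}{0}\sqrt2<e$. Then start the geometric argument at $m=2$, where $\sqrt3<(e/2)^2$, and for $m\ge2$ one has $\sqrt{(m+2)/(m+1)}\le\sqrt{4/3}<e/2$.

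With both repairs you have a complete, self-contained proof. It even yields the sharper constant $\prod_i\binom{d_i}{\lfloor d_i/2\rfloor}\sqrt{d_i+1}$ in place of $e^{d_1+\cdots+d_n}$.
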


\begin{lemma}\label{lm-11}
Assume $A,B,G\in \Z[x_1,\dots,x_n]$ and $G=\gcd(A,B)$. Let $p_1,\dots,p_n\in\N$ be integers and $\mathbf{s}=(s_1,\dots,s_n)\in\N^n$. Assume $P:=\max\{p_1,\dots,p_n\}$ and $S:=\max\{s_1,\dots,s_n\}$. If $\max\{\|A\|_{\infty},\|B\|_{\infty}\}\leq C$, then $\|G(p_1y^{s_1},\dots,p_ny^{s_n})\|_{\infty}\leq e^{nd}(d+1)^nP^{D}C$,
where $D=\max\{\deg A,\deg B\}$ and $d$ is the partial degree bound of $A$ and $B$.
\end{lemma}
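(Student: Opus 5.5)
We bound each coefficient of $G(p_1y^{s_1},\dots,p_ny^{s_n})$ by the triangle inequality, counting how many terms of $G$ can land on the same power of $y$. Write
\[
G=\sum_{\mathbf{e}} g_{\mathbf{e}}\,x_1^{e_1}\cdots x_n^{e_n},
\]
with the sum over exponent vectors $\mathbf{e}$ in the support of $G$. Substituting $x_i=p_iy^{s_i}$ gives
\[
G(p_1y^{s_1},\dots,p_ny^{s_n})=\sum_{\mathbf{e}} g_{\mathbf{e}}\,p_1^{e_1}\cdots p_n^{e_n}\,y^{\sum_i s_ie_i}.
\]
The coefficient of a fixed power $y^k$ is therefore the sum of $g_{\mathbf{e}}\,p_1^{e_1}\cdots p_n^{e_n}$ over all $\mathbf{e}$ with $\sum_i s_ie_i=k$. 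We then bound three quantities separately: the size of $|g_{\mathbf{e}}|$, the size of $p_1^{e_1}\cdots p_n^{e_n}$, and the number of summands.

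For the first, $G$ divides $A$, and $\deg_{x_i}A\le d$ for every $i$. Lemma~\ref{lm-1} applied to $F=A$ gives
\[
|g_{\mathbf{e}}|\le\|G\|_\infty\le e^{\deg_{x_1}A+\cdots+\deg_{x_n}A}\|A\|_\infty\le e^{nd}C.
\]
For the second, each monomial of $G$ divides some monomial of $A$ (since $G\mid A$), so $\deg G\le\deg A\le D$. Hence $e_1+\cdots+e_n\le D$ and
\[
p_1^{e_1}\cdots p_n^{e_n}\le P^{e_1+\cdots+e_n}\le P^{D}.
\]
For the third, each $e_i$ satisfies $0\le e_i\le\deg_{x_i}G\le d$, so there are at most $(d+1)^n$ exponent vectors in total. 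This bounds the number of summands in any single coefficient of $y^k$.

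Combining the three bounds gives $\|G(p_1y^{s_1},\dots,p_ny^{s_n})\|_\infty\le (d+1)^n\cdot e^{nd}C\cdot P^{D}$, which is the claimed bound.

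The only delicate point is that Lemma~\ref{lm-1} is applied to $A$ with its partial degrees, not to $G$; this gives $e^{nd}$ rather than something depending on $D$. Beyond that the argument is a direct triangle inequality. No separation assumption on $\mathbf{s}$ is needed, since the crude factor $(d+1)^n$ absorbs any collisions of $y$-exponents.
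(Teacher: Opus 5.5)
Your argument is the paper's proof. It uses the triangle inequality over the at most $(d+1)^n$ terms of $G$, the bound $\|G\|_\infty\le e^{nd}C$ from Lemma~\ref{lm-1} applied to $A$, and $M(\mathbf{p})\le P^{D}$ for each monomial $M$ of $G$.

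One justification is false, though the fact it supports is true. A monomial of a factor need not divide any monomial of the product: in $x^3-y^3=(x-y)(x^2+xy+y^2)$, the monomial $xy$ divides neither $x^3$ nor $y^3$. The inequalities you need, $\deg G\le\deg A\le D$ and $\deg_{x_i}G\le\deg_{x_i}A\le d$, still hold because total and partial degrees are additive over the integral domain $\Z[x_1,\dots,x_n]$; for example, $\deg A=\deg G+\deg(A/G)$.

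Also, the step $P^{e_1+\cdots+e_n}\le P^{D}$ tacitly needs $P\ge 1$. The paper assumes this too, since the $p_i$ are primes there.
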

\begin{proof}
Assume $G=c_1M_1+\cdots+c_tM_t.$
Then $\|G(p_1y^{s_1},\dots,p_ny^{s_n})\|_{\infty}\leq |c_1|M_1(\mathbf{p})+\cdots+|c_t|M_t(\mathbf{p})\leq tP^{D}\|G\|_{\infty}$.  By Lemma \ref{lm-1}, we have $\|G\|_\infty\leq e^{nd}C$. As $t\leq (d+1)^n$, $\|G(p_1y^{s_1},\dots,p_ny^{s_n})\|_{\infty}\leq e^{nd}(d+1)^nP^{D}C$.
\end{proof}\qed

\begin{lemma}\cite[In the proof of Th. 18.8]{GaGer13}\label{lm-3}
The total number of primes in the range $(N,\dots,2N]$ is at least $\frac12N/\ln N$ when $N\geq 6$.
\end{lemma}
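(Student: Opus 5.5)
The statement is the prime-counting estimate from von zur Gathen--Gerhard: for $N\geq 6$, the interval $(N,2N]$ contains at least $\frac12 N/\ln N$ primes. I would prove it through Chebyshev-type bounds, using the central binomial coefficient $\binom{2N}{N}$.

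The key identity is that every prime $p$ with $N<p\leq 2N$ divides $\binom{2N}{N}$ exactly once. This holds because $p$ divides $(2N)!$ once and does not divide $(N!)^2$.

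\textbf{Lower bound.} Write $\pi(x)$ for the number of primes up to $x$. Since $\binom{2N}{N}$ is the largest of the $2N+1$ terms in $(1+1)^{2N}$, we have $\binom{2N}{N}\geq 4^N/(2N+1)$. Next, using Legendre's formula, each prime power $p^{k}$ exactly dividing $\binom{2N}{N}$ satisfies $p^{k}\leq 2N$. Hence
\[
\binom{2N}{N}\leq (2N)^{\pi(2N)},
\]
which gives $\pi(2N)\geq \frac{N\ln 4-\ln(2N+1)}{\ln(2N)}$.

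\textbf{Upper bound.} The product of the primes in $(N,2N]$ divides $\binom{2N}{N}\leq 4^N$. Each such prime exceeds $N$, so
\[
\pi(2N)-\pi(N)\leq \frac{N\ln 4}{\ln N}.
\]
On its own this bound points the wrong way for our purpose. What I actually need is an upper bound on $\pi(N)$, so that $\pi(2N)-\pi(N)$ can be bounded below.

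\textbf{Bounding $\pi(N)$.} I would use the classical bound $\prod_{p\leq x}p\leq 4^x$, proved by induction via $\binom{2m+1}{m}\leq 4^m$. It gives $\theta(N)\leq N\ln 4$, where $\theta(x)=\sum_{p\leq x}\ln p$. From this, $\pi(N)\leq \pi(\sqrt N)+\theta(N)/\ln\sqrt N\leq \sqrt N+2N\ln4/\ln N$.

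This is where the main obstacle appears. The constants do not combine cleanly: $\pi(2N)-\pi(N)\geq \frac{N\ln4}{\ln 2N}-\frac{2N\ln 4}{\ln N}-\dots$ is negative. So the naive Chebyshev approach is too weak to capture the constant $\frac12$.

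\textbf{Better route.} Work instead with a lower bound on $\theta(2N)-\theta(N)$. Start from
\[
\ln\binom{2N}{N}=\sum_p v_p\ln p .
\]
The primes $p\leq\sqrt{2N}$ contribute at most $\sqrt{2N}\ln(2N)$. The primes in $(\sqrt{2N},\,2N/3]$ contribute at most $\theta(2N/3)\leq \frac23N\ln 4$, because in Erd\H{o}s's argument primes in $(2N/3,N]$ have exponent $0$ and primes above $\sqrt{2N}$ have exponent at most $1$. Therefore
\[
\theta(2N)-\theta(N)\geq N\ln4-\ln(2N+1)-\tfrac23N\ln4-\sqrt{2N}\ln(2N)=\tfrac13N\ln 4-O(\sqrt N\ln N).
\]
Dividing by $\ln(2N)$ gives roughly $0.46\,N/\ln(2N)$ primes. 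That is still not quite $\frac12N/\ln N$, so the constant needs sharper input.

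\textbf{What I would actually do.} Invoke Rosser--Schoenfeld's explicit bounds,
\[
\frac{x}{\ln x}\Bigl(1+\frac{1}{2\ln x}\Bigr)<\pi(x)<\frac{x}{\ln x}\Bigl(1+\frac{3}{2\ln x}\Bigr),
\]
valid for $x\geq 59$ for the lower bound and $x>1$ for the upper bound. These give
\[
\pi(2N)-\pi(N)\geq \frac{2N}{\ln 2N}-\frac{N}{\ln N}\Bigl(1+\frac{3}{2\ln N}\Bigr),
\]
and I would check algebraically that this is at least $\frac12 N/\ln N$ for large $N$. The small range $6\leq N<N_0$ is then verified by direct enumeration.

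The main obstacle is this final inequality. $\frac{2N}{\ln 2N}$ is asymptotically $2N/\ln N$, so the margin is of order $N/(2\ln N)$. This margin has to absorb the $\frac{3}{2\ln N}$ correction, which forces a moderately large threshold $N_0$ before the analytic bound takes over, with the range below it handled by finite checking.
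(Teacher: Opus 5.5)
Your final route (feeding the explicit Rosser--Schoenfeld bounds into $\pi(2N)-\pi(N)$ and enumerating the small cases) is correct. It is essentially the standard argument behind this estimate, which the paper does not prove itself but simply cites from von zur Gathen--Gerhard, and you were right to abandon the Chebyshev-type attempts, since they cannot reach the constant $\frac12$. To finish it concretely: your displayed inequality holds once $\ln N\ge 5.46$ (i.e.\ $N\ge 236$); keeping the factor $1+\frac{1}{2\ln 2N}$ in the lower bound for $\pi(2N)$ lowers the threshold to $N\ge 51$; and for $6\le N\le 50$ a direct count confirms the claim, the tightest case being $N=8$, with two primes against $8/(2\ln 8)\approx 1.92$.
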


Let $Q$ be a $k\times k$ matrix with $Q_{i,j}\in\Z$. Hadamard's bound $\H(Q)$ for $|\det(Q)|$ is
$|\det Q| \leq \prod_{i=1}^k \sqrt{\sum_{j=1}^kQ^2_{i,j}}=:\H(Q)$.
For a polynomial $f\in\Z[x_1,\dots,x_n]$, we define $\|f\|_1=$ sum of the absolute values of the coefficients.

\begin{lemma}\label{lm-7}\cite{lossers1974hadamard}
Let $U$ be a $k\times k$ matrix with entries $U_{i,j}\in\Z[y]$. Let $Q$ be the $k\times k$ integer matrix with $Q_{i,j}=\|U_{i,j}\|_1$. Then $\|\det U\|_\infty\leq \H(Q)$.
\end{lemma}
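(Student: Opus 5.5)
The statement is Lemma~\ref{lm-7}: for a $k\times k$ matrix $U$ over $\Z[y]$ with $Q_{i,j}=\|U_{i,j}\|_1$, we want $\|\det U\|_\infty\leq \H(Q)$. The plan is to pass from the polynomial matrix to complex scalar matrices by evaluating $y$ on the unit circle. Then the classical Hadamard inequality applies pointwise, and the coefficient bound is recovered by integrating over the circle.

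First, fix $\theta\in[0,2\pi)$ and set $z=e^{i\theta}$. Every entry satisfies $|U_{i,j}(z)|\leq \|U_{i,j}\|_1=Q_{i,j}$. The complex matrix $U(z)$ obeys the classical Hadamard inequality, valid for complex matrices by Gram--Schmidt applied to the rows. This gives
\[
|\det U(z)|\leq \prod_{i=1}^k \Bigl(\sum_{j=1}^k |U_{i,j}(z)|^2\Bigr)^{1/2}\leq \prod_{i=1}^k \Bigl(\sum_{j=1}^k Q_{i,j}^2\Bigr)^{1/2}=\H(Q).
\]
Since evaluation at $z$ is a ring homomorphism $\Z[y]\to\mathbb{C}$, we have $\det(U(z))=(\det U)(z)$. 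Hence the polynomial $f:=\det U\in\Z[y]$ satisfies $\sup_{|z|=1}|f(z)|\leq \H(Q)$.

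Second, convert this sup-norm bound on the circle into a bound on the coefficients. Write $f=\sum_m f_m y^m$. Cauchy's formula, equivalently Fourier orthogonality, gives
\[
f_m=\frac{1}{2\pi}\int_0^{2\pi} f(e^{i\theta})e^{-im\theta}\,d\theta,
\]
so $|f_m|\leq \sup_{|z|=1}|f(z)|\leq \H(Q)$ for every $m$. This is exactly $\|\det U\|_\infty\leq \H(Q)$.

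No step is a serious obstacle. The only point needing care is the complex Hadamard inequality, which holds in the standard form with $|\cdot|^2$. As a fallback, Parseval yields the stronger bound $\|f\|_2\leq \sup_{|z|=1}|f(z)|$, which also implies the claim.
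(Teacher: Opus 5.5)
Your argument is correct and complete. It is the standard proof of this Goldstein--Graham bound: evaluate at $|y|=1$, apply the complex Hadamard inequality, then recover the coefficients by Cauchy's formula (or by Parseval, which gives the stronger $\|\det U\|_2$ bound). The paper gives no proof of its own and simply cites Lossers's solution, which, as far as I recall, follows this same route.
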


The result can be easily generalized to the multivariate case, as we can always use the Kronecker substitution $x_1=x,x_2=x^m,\dots,x_n=x^{m^{n-1}}$. Choose $m$ large enough, then each polynomial in the entries and $\det A$ are one-to-one corresponding to univariate polynomials.

\begin{lemma}\label{lm-5}
Let $f,g\in\Z[x_1,\dots,x_n]$ be primitive and coprime.  Let $d$ be the partial degree bound of $f,g$ and $\|f\|_{\infty},\|g\|_{\infty}\leq C$.
Let $R=\res_{x_n}(f,g)$. Then $R\neq 0$ and $\|R\|_{\infty}\leq (d+1)^{2nd-d}C^{2d}.$
\end{lemma}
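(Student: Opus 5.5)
To prove Lemma \ref{lm-5}, I would handle the two claims, $R\neq 0$ and the coefficient bound, separately.

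\emph{Nonvanishing.} Since $f$ and $g$ are coprime in $\Z[x_1,\dots,x_n]$, they have no common factor of positive degree in $x_n$. By Gauss's lemma they are then also coprime in $K[x_n]$, where $K=\mathbb{Q}(x_1,\dots,x_{n-1})$. Sylvester's criterion \cite[p288]{Geddes92Algorithms} then gives $R\neq 0$, exactly as in the proof of Lemma \ref{the-13}. If $\deg_{x_n}f=0$ or $\deg_{x_n}g=0$, the convention (\ref{eq-3}) gives $R=1$ and both claims are trivial. So assume $k=\deg_{x_n}f\geq 1$ and $\ell=\deg_{x_n}g\geq 1$, with $k,\ell\le d$.

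\emph{Coefficient bound.} $R$ is the determinant of the $(k+\ell)\times(k+\ell)$ Sylvester matrix $U$. Its entries are the coefficients $a_i,b_j\in\Z[x_1,\dots,x_{n-1}]$ of $f$ and $g$ with respect to $x_n$. I would apply Lemma \ref{lm-7} in its multivariate form, obtained by Kronecker substitution as remarked after that lemma. Let $Q_{i,j}=\|U_{i,j}\|_1$. Each $a_i$ has at most $(d+1)^{n-1}$ terms, each of absolute value at most $C$, so $\|a_i\|_1\le (d+1)^{n-1}C$, and similarly for the $b_j$. A row coming from $f$ has at most $k+1\le d+1$ nonzero entries, so its Euclidean norm is at most
$$\sqrt{d+1}\,(d+1)^{n-1}C=(d+1)^{n-1/2}C .$$
The same bound holds for the rows from $g$. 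There are $k+\ell\le 2d$ rows, so
$$\|R\|_\infty\le \H(Q)\le \bigl((d+1)^{n-1/2}C\bigr)^{2d}=(d+1)^{2nd-d}C^{2d},$$
which is exactly the stated bound. Note that the exponent $2nd-d=2d(n-\tfrac12)$ matches $2d$ rows each contributing $(d+1)^{n-1/2}$, which confirms this is the intended route.

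\emph{Main obstacle.} The only delicate step is justifying the multivariate use of Lemma \ref{lm-7}. I would make the Kronecker map $x_i\mapsto x^{m^{i-1}}$ explicit with $m$ larger than every partial degree occurring in $\det U$, namely $m>2d^2+1$. This map is injective on monomials, so it preserves the $1$-norm of each entry and the $\infty$-norm of the determinant, and it commutes with taking the determinant. The bound then transfers directly from the univariate case.
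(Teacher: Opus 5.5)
Your proposal is correct and follows essentially the same route as the paper. Both proofs dispose of the degree-zero case via the convention (\ref{eq-3}), get $R\neq 0$ from Sylvester's criterion, and bound $\|R\|_\infty$ by applying Lemma~\ref{lm-7} to the Sylvester matrix, using $\|U_{i,j}\|_1\le (d+1)^{n-1}C$, at most $d+1$ nonzero entries per row and at most $2d$ rows; your Gauss's-lemma remark and explicit Kronecker substitution simply make precise two steps the paper leaves implicit.
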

\begin{proof}
If $\deg_{x_n}f=0$ or $\deg_{x_n}g=0$, then $R=1$ (see Eq. (\ref{eq-3})). We proved it. So now assume $\deg_{x_n}f>0$ and $\deg_{x_n}g>0$.
As $f$ and $g$ are coprime, $R\neq 0$ \cite[p288, Sylvester's Criterion]{Geddes92Algorithms}. The partial degree of $R$ is $\leq 2d^2$ by the definition of resultant. We analyse the coefficient bound of $R$. Assume $U_{i,j}$ are the entries of ${\rm Syl}(f,g)$, the $Sylvester\ matrix$ of $f,g$ w.r.t $x_n$, and $Q_{i,j}=\|U_{i,j}\|_1$. Regard $f,g\in \Z[x_1,\dots,x_{n-1}][x_n]$. As there are at most $(d+1)^{n-1}$ terms in each coefficient of $f,g$ w.r.t $x_n$, $Q_{i,j}\leq (d+1)^{n-1}C$.  By Lemma \ref{lm-7}, $\|R\|_{\infty}=\|\det{\rm Syl}(f,g)\|_{\infty}\leq \H(Q)\leq \prod_{i=1}^{2d}\sqrt{\sum_{i=1}^{d+1}(d+1)^{2(n-1)}C^2}=(d+1)^{2nd-d}C^{2d}$.
\end{proof}\qed

In the following, denote $H_1:=A_{(\mathbf{s},y)}(\mathbf{p},y)=F_1(\mathbf{p},y)$, $H_2:=B_{(\mathbf{s},y)}(\mathbf{p},y)=F_2(\mathbf{p},y)$. Assume $H_1\neq 0$ and $H_2\neq 0$. 

\begin{lemma}\label{lm-9}
Let $A,B\in \Z[x_1,\dots,x_n]$, $\mathbf{p} =(p_1,\dots,p_n)\in\N^n$ and $P=\max\{p_1,\dots,p_n\}$. Let $\mathbf{s}=(s_1,\dots,s_n)\in\N^n$ and $S=\max\{s_1,\dots,s_n\}$. Let $D= \max(\deg A,\deg B)$ and $d=\max_{i=1}^n \max(\deg_{x_i}A,\deg_{x_i}B)$. Let $C=\max \{\|A\|_{\infty},\|B\|_{\infty}\}$. Let $R=\res_{y}(H_1/\gcd(H_1,H_2),H_2/\gcd(H_1,H_2))$.  Let $N\geq 16(SD\ln(SD+1)+SD(2SD+2)+n(2SD+2)\ln(d+1)+D(2SD+2)\ln P+(2SD+2)\ln C)$. If we randomly choose a prime $p$ in $(N,2N]$, then $p\nmid R\cdot \LC(H_1)\cdot \LC(H_2)$ with probability $\geq \frac{7}{8}$.
\end{lemma}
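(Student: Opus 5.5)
The plan is the standard counting argument: bound the product $L:=R\cdot\LC(H_1)\cdot\LC(H_2)$, a nonzero integer, in absolute value. Then compare the number of its prime divisors in $(N,2N]$ with the number of primes in that interval from Lemma \ref{lm-3}.

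First, nonvanishing. Since $H_1,H_2\neq 0$, their leading coefficients are nonzero integers. The quotients $h_1:=H_1/\gcd(H_1,H_2)$ and $h_2:=H_2/\gcd(H_1,H_2)$ are coprime in $\Z[y]$, so $R\neq 0$ by Sylvester's criterion, or $R=1$ by Eq. (\ref{eq-3}) if one of them is constant. Hence $L\neq 0$. Any nonzero integer $L$ has at most $\log_N|L|$ prime divisors exceeding $N$. By Lemma \ref{lm-3}, for $N\geq 6$ the interval $(N,2N]$ contains at least $\frac12 N/\ln N$ primes. So the failure probability is at most
\[
\frac{\ln|L|/\ln N}{\tfrac12 N/\ln N}=\frac{2\ln|L|}{N}.
\]
It therefore suffices to show $\ln|L|\le N/16$, which gives failure probability at most $1/8$. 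This reduces the lemma to checking that the displayed expression bounds $\ln|L|$.

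Next, the degrees and heights of $H_1,H_2$. We have $\deg_y H_i\le SD$, and each coefficient of $H_i$ is a sum of at most $(d+1)^n$ terms $c\,\mathbf{p}^{\mathbf e}$ with $|c|\le C$ and $\mathbf{p}^{\mathbf e}\le P^D$. So $\|H_i\|_\infty\le (d+1)^nP^DC$, and $|\LC(H_i)|$ obeys the same bound. For $h_1,h_2$, which are factors of $H_1,H_2$, I would use a univariate factor-coefficient bound. Lemma \ref{lm-1} with one variable of degree $\le SD$ gives $\|h_i\|_\infty\le e^{SD}(d+1)^nP^DC$. Alternatively, Mignotte's bound gives something like $2^{SD}\sqrt{SD+1}\,\|H_i\|_\infty$; this may be the source of the $SD\ln(SD+1)$ term.

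Then apply Lemma \ref{lm-7}, that is, Hadamard's bound, to the Sylvester matrix of $h_1,h_2$. This matrix has at most $2SD$ rows, each with at most $SD+1$ integer entries bounded by $\|h_i\|_\infty$. So
\[
|R|\le \bigl((SD+1)^{1/2}\,e^{SD}(d+1)^nP^DC\bigr)^{2SD}.
\]
Taking logarithms gives
\[
\ln|R|\le SD\ln(SD+1)+2S^2D^2+2SDn\ln(d+1)+2SD\cdot D\ln P+2SD\ln C.
\]
Adding $\ln|\LC(H_1)\LC(H_2)|\le 2(n\ln(d+1)+D\ln P+\ln C)$ turns the factors $2SD$ into $2SD+2$. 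Together with $2S^2D^2\le SD(2SD+2)$, this is bounded by the bracket in the hypothesis. Hence $\ln|L|\le N/16$, and also $N\ge 6$ since $N$ is large.

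The main obstacle is the bookkeeping of the coefficient bound for the cofactors $h_i$ inside the Hadamard estimate, so that the constants match the stated expression exactly. In particular, the $SD(2SD+2)$ term has to absorb the $e^{SD}$ factor from Lemma \ref{lm-1}. The degenerate cases, where some $h_i$ is constant so that $R=1$, only make the bound easier.
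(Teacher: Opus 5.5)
Your proposal is correct and follows the paper's proof essentially step for step. The paper bounds $\|H_i\|_\infty\le (d+1)^nP^DC$ and uses Lemma \ref{lm-1} to get $e^{SD}(d+1)^nP^DC$ for the cofactors. It then cites Lemma \ref{lm-5}, which is itself just the Hadamard estimate you carry out directly, to get $|R|\le (SD+1)^{SD}[e^{SD}(d+1)^nP^DC]^{2SD}$, and finishes with the same prime-counting argument via Lemma \ref{lm-3}. Your Mignotte aside is unnecessary: the $SD\ln(SD+1)$ term comes from the $\sqrt{SD+1}$ row-norm factor in Hadamard's bound, exactly as your own computation shows.
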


\begin{proof}
As $\|A\|_{\infty}\leq C$ and $\#A\leq (d+1)^n$, $\|H_1\|_{\infty}\leq (d+1)^nP^DC$. Similarly, $\|H_2\|_{\infty}\leq (d+1)^nP^DC$. Denote $\widetilde{H}_1:=H_1/\gcd(H_1,H_2)$ and $\widetilde{H}_2:=H_2/\gcd(H_1,H_2)$. As $\deg H_1,\deg H_2\leq SD$, by Lemma \ref{lm-1}, $\|\widetilde{H}_1\|_{\infty},\|\widetilde{H}_2\|_{\infty}\leq  e^{SD}(d+1)^nP^DC$. As $\widetilde{H}_1$ and $\widetilde{H}_2$ are coprime, by Lemma \ref{lm-5}, $|R|=\|R\|_{\infty}\leq (SD+1)^{SD}[e^{SD}(d+1)^nP^DC]^{2SD}$.
Denote $\kappa:=|R|\cdot |\LC(H_1)|\cdot |\LC(H_2)|$.
Then $\kappa\leq (SD+1)^{SD}[e^{SD}(d+1)^nP^DC]^{2SD+2}$.
There are at most $\frac{\ln \kappa}{\ln N}$ different primes in $\kappa$ greater than $N$. By Lemma \ref{lm-3}, there are at least $\frac12N/\ln N$ primes in $(N,2N]$. So if randomly choose a prime $p$ in $(N,2N]$, the probability that $p\nmid \kappa$ is $\geq 1-\frac{\ln \kappa/\ln N}{\frac{1}{2}N/\ln N}=1-\frac{2\ln \kappa}{ N}$. As $N\geq 16(SD\ln(SD+1)+SD(2SD+2)+n(2SD+2)\ln(d+1)+D(2SD+2)\ln P+(2SD+2)\ln C)$, the probability $\geq 1-\frac{1}{8}=\frac{7}{8}$.
We proved it.
\end{proof}\qed

\subsubsection{Algorithms}

The Hensel lifting algorithm is called by our GCD Algorithm \ref{alg-3}.

\begin{algorithm}[H]
\caption{Hensel lifting \cite[Alg. 15.17]{GaGer13}}\label{alg-2}
\begin{algorithmic}
\REQUIRE A prime $p\in \N$, $f\in \Z[x]$ of degree $d\geq 1$ such that $\LC(f)$ is a unit modulo $p$;
monic nonconstant polynomials $f_1,f_2\in \Z[x]$ that are coprime modulo $p$ and satisfy $f\equiv \LC(f)f_1 f_2 \mod p$; an integer $\ell \in \N$.

\ENSURE Monic polynomials $f^*_1,f^*_2\in \Z[x]$ with  $f\equiv \LC(f)f^*_1 f^*_2 \mod p^{\ell}$ and $f^*_i\equiv f_i \mod p$ for $i=1,2$.
\end{algorithmic}
\end{algorithm}

\begin{lemma}\cite[The.15.18]{GaGer13}\label{lm-12}
Algorithm \ref{alg-2} works correctly as specified.
If $p\in \N$ is prime, $\|f \|_{\infty} < p^\ell$, and $\|f_i\|_{\infty} < p$ for all $i$, then the algorithm takes $O^\sim(d\ell \log p)$ bit operations.
\end{lemma}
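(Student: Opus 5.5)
The plan is to follow the structure of multifactor Hensel lifting in \cite[Alg. 15.17]{GaGer13}, treating the two-factor input of Algorithm \ref{alg-2} as the special case $r=2$ of a factorization $f\equiv \LC(f)f_1\cdots f_r \bmod p$ into monic, pairwise coprime factors. First I would build a balanced factor tree: the leaves are $f_1,\dots,f_r$, and each internal node carries the product of its two children, reduced modulo $p$. At every internal node with children $g,h$ I compute, once and modulo $p$, Bézout coefficients $s,t$ with $sg+th\equiv 1 \bmod p$. These exist because the leaves are pairwise coprime modulo $p$, so any two disjoint products of leaves are coprime modulo $p$ as well. Second, I lift the root factorization $f\equiv \LC(f)\cdot g\cdot h$ and the Bézout relation together using the quadratic Hensel step. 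Each step takes a factorization and inverse pair valid modulo $m$ to one valid modulo $m^2$, starting from $m=p$. After $\lceil\log_2\ell\rceil$ steps the modulus is at least $p^\ell$. The lifted children are then processed recursively, as new roots of their subtrees, down to the leaves.

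Correctness is proved one step at a time. For the Hensel step I use the standard identity: if $f\equiv gh$, $sg+th\equiv 1 \bmod m$ and $e=f-gh$, set $g^*=g+te \operatorname{rem} g$ and $h^*=h+se+qh$, with $q$ the quotient from that division. Then $f\equiv g^*h^*\bmod m^2$, monicity and degrees are preserved, and $g^*\equiv g$, $h^*\equiv h\bmod m$. The analogous update of $s,t$ restores $s^*g^*+t^*h^*\equiv1\bmod m^2$. Induction on the number of steps, and then on the depth of the tree, gives monic $f_i^*$ with $f\equiv\LC(f)f_1^*\cdots f_r^*\bmod p^\ell$ and $f_i^*\equiv f_i\bmod p$. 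Since $\LC(f)$ is a unit mod $p$, I normalize $f$ by $\LC(f)^{-1}\bmod p^\ell$ at the start so that the root is monic.

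For the cost, one Hensel step on polynomials of degree $\le d$ modulo $m$ takes $O(1)$ multiplications and divisions with remainder of degree-$d$ polynomials. With fast arithmetic this is $O^\sim(d\log m)$ bit operations. Along the sequence $m=p,p^2,p^4,\dots$ the quantity $\log m$ doubles, so the geometric sum is dominated by the last step and costs $O^\sim(d\ell\log p)$ for a single node. The hypotheses $\|f\|_\infty<p^\ell$ and $\|f_i\|_\infty<p$ ensure that all inputs are already reduced, so the initial reduction adds no extra cost. At each level of the factor tree the degrees of the nodes sum to at most $d$, so each level costs $O^\sim(d\ell\log p)$. There are $O(\log r)\subseteq O(\log d)$ levels, and this factor is absorbed by the soft-Oh. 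The initial extended Euclidean computations modulo $p$ cost $O^\sim(d\log p)$ per level, which is also within the bound. In our application $r=2$, so the tree has a single node.

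I expect the main obstacle to be the step-level bookkeeping. One must verify that the Bézout relation also lifts quadratically, which needs the error term $sg^*+th^*-1\equiv0\bmod m$ and a correction by its square, and that the degree constraints ($\deg s<\deg h$, $\deg t<\deg g$) are maintained, so that the operand sizes stay $O(d\log m)$. I would follow the proof of the Hensel step \cite[Thm. 15.11]{GaGer13} for this verification and then cite it.
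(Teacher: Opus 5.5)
Your proposal is correct and follows essentially the proof the paper relies on. The paper gives no argument of its own and only cites \cite[Thm.~15.18]{GaGer13}, whose proof is this same factor-tree recursion of quadratic Hensel steps (via \cite[Thm.~15.11]{GaGer13}), with the geometric cost sum over $m=p,p^2,p^4,\dots$ and $O(\log r)$ levels; your differences are cosmetic (you make $g$ the monic factor and pre-normalize $f$ by $\LC(f)^{-1}\bmod p^\ell$, whereas the book keeps $h$ monic and carries $\LC(f)$ inside $g_0$), and for $r=2$ the tree reduces to the single node you describe.
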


Let's show the details of how to compute $\gcd(F_1(\mathbf{p},y),F_2(\mathbf{p},y))$  in the Step \ref{alg-1-5} of Algorithm \ref{alg-1}. Assume $\mathbf{p}$ is a $good\ point$ for $F_1,F_2$, which makes $\deg(\gcd(F_1(\mathbf{p},y),F_2(\mathbf{p},y)))=\deg (G_{(\mathbf{s},y)}(\mathbf{p},y))$. So the primitive part of $G_{(\mathbf{s},y)}(\mathbf{p},y)$  is the same as the primitive part of $\gcd(F_1(\mathbf{p},y),F_2(\mathbf{p},y))$.

The following are the main steps.
\begin{itemize}
\item find a ``good" prime $p$ and  reduce $F_1(\mathbf{p},y)$ and $F_2(\mathbf{p},y)$ to $A_p:=F_1(\mathbf{p},y) \mod p$ and $B_p:=F_2(\mathbf{p},y) \mod p$.
\item compute the GCD $U_p=\gcd(A_p,B_p)$ in $\F_p[y]$.

\item  lift $U_p$ to $G_{(\mathbf{s},y)}(\mathbf{p},y)\Delta$ by Hensel lifting method, where $\Delta\in\Z$ and the leading coefficient of  $G_{(\mathbf{s},y)}(\mathbf{p},y)\Delta$ is the same as the leading coefficient of  $F_1(\mathbf{p},y)$.
\item  compute the primitive part of $G_{(\mathbf{s},y)}(\mathbf{p},y)\Delta$ to obtain the primitive part of $G_{(\mathbf{s},y)}(\mathbf{p},y)$.
\end{itemize}

\begin{algorithm}[H]
\caption{Univariate polynomial GCD.}\label{alg-3}
\begin{algorithmic}[1]
\REQUIRE An integer vector $\mathbf{s}=(s_1,\dots,s_n)\in \N^n$; an integer vector $\mathbf{p}=(p_1,\dots,p_n)\in \N^n$ and two polynomials $F_1(\mathbf{p},y),F_2(\mathbf{p},y)\in \Z[y]$ so that $\deg G_{(\mathbf{s},y)}(\mathbf{p},y)=\deg \gcd(F_1(\mathbf{p},y),F_2(\mathbf{p},y))$;
a bound $C\geq \max(\|A\|_{\infty},\|B\|_{\infty})$.
\ENSURE With probability $\geq 7/8$, return the primitive part of $G_{(\mathbf{s},y)}(\mathbf{p},y)$.

\STATE Find the parameters $D,d,P,S$ as defined in Lemma \ref{lm-9}.

\STATE $N:=\lceil 16(SD\ln(SD+1)+SD(2SD+2)+n(2SD+2)\ln(d+1)+D(2SD+2)\ln P+(2SD+2)\ln C)\rceil$.\label{alg-3-2}

\REPEAT
\STATE randomly choose a prime $p$ from $(N,2N]$ \label{alg-3-3} \UNTIL {$p \not | \, LC(F_1(\mathbf{p},y))$ or $p \not | \, \LC(F_2(\mathbf{p},y))$.}

\STATE Compute $A_p:=F_1(\mathbf{p},y)\mod p$ and $B_p:=F_2(\mathbf{p},y)\mod p$.\label{alg-3-4}

\STATE  Compute the GCD $U_p:=\gcd(A_p,B_p)$ in $\F_p[y]$ and $\widetilde{A} = A_p \div U_p$ and $\widetilde{B}=B_p \div U_p$. \label{alg-3-5}

 \IF{$\gcd(U_p,\widetilde{A})=1$\label{alg-3-6}}
     \STATE let $f_1:=U_p,f_2:=\widetilde{A},f:=F_1(\mathbf{p},y)$;
 \ELSIF {$\gcd(U_p,\widetilde{B})=1$\label{alg-3-9}}
  \STATE  let $f_1:=U_p,f_2:=\widetilde{B},f:=F_2(\mathbf{p},y)$;
  \ELSE
    \REPEAT
    \STATE randomly choose $\alpha\in \F^{*}_p$ \label{alg-3-132}\UNTIL {$\gcd(U_p,\widetilde{A}+\alpha \widetilde{B})=1$.}
    \STATE let $f_1:=U_p, f_2:=\widetilde{A}+\alpha \widetilde{B},f:=F_1(\mathbf{p},y)+\alpha F_2(\mathbf{p},y)$;\label{alg-3-131}
 \ENDIF \label{alg-3-13}

\STATE Let $\ell:=\lceil\log _p (2e^{nd}(d+1)^{2n}P^{2D}C^2)\rceil+1$.\label{alg-3-15}

\STATE Call Algorithm \ref{alg-2} with input $f,f_1,f_2,p$ and $\ell$ such that $f\equiv \LC(f) f_1f_2 \mod p$. Let $f_1^*,f_2^*$ be the output such that $f\equiv \LC(f)f_1^*f_2^* \mod p^{\ell}$ and $f_1^*\equiv f_1 \mod p,f_2^*\equiv f_2 \mod p$.\label{alg-3-16}

\STATE Compute the polynomial $f_{\rm sym}$ such that coefficients of $f_{\rm sym}\equiv\LC(F_1(\mathbf{p},y))f^*_1 \mod p^{\ell}$ and are
in the range $[-p^{\ell}/2,p^{\ell}/2]$.
\RETURN the primitive part of $f_{\rm sym}$\label{alg-3-17}.

\end{algorithmic}
\end{algorithm}

\begin{lemma}\label{lm-15}
Algorithm \ref{alg-3} works correctly as specified.
\end{lemma}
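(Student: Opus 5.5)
The plan is to show that Algorithm \ref{alg-3} returns $\Prim(G_{(\mathbf{s},y)}(\mathbf{p},y))$ whenever the prime $p$ chosen in Step \ref{alg-3-3} does not divide $\kappa:=R\cdot\LC(H_1)\cdot\LC(H_2)$, where $R=\res_y(H_1/\gcd(H_1,H_2),H_2/\gcd(H_1,H_2))$. By Lemma \ref{lm-9}, with $N$ as set in Step \ref{alg-3-2}, this happens with probability $\geq 7/8$, which gives the stated success probability.

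Write $g:=\gcd(H_1,H_2)\in\Z[y]$. By the input hypothesis, $\deg g=\deg G_{(\mathbf{s},y)}(\mathbf{p},y)$. Since $G_{(\mathbf{s},y)}(\mathbf{p},y)$ divides $g$, the two primitive parts coincide, so it suffices to recover $\Prim(g)$.

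\textbf{Step 1: reduction mod $p$ preserves the gcd degree.} Assume $p\nmid\kappa$. Then $p$ does not divide either leading coefficient, and both cofactors $H_i/g$ keep their degree mod $p$. By (iv)–(v) of Lemma \ref{lm-2}, applied to $H_1/g$ and $H_2/g$ (or trivially, if one of them is constant), their images mod $p$ are coprime. Hence $U_p=\gcd(A_p,B_p)$ equals $\Phi_p(g)$ up to a unit, with $\deg U_p=\deg g$, and $\widetilde A\equiv H_1/g$, $\widetilde B\equiv H_2/g$ up to units. In particular $\gcd(U_p,\widetilde A,\widetilde B)=1$. If neither $\gcd(U_p,\widetilde A)$ nor $\gcd(U_p,\widetilde B)$ is $1$, Lemma \ref{lm-10} shows that at most $\deg U_p$ values of $\alpha$ fail. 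I would require $N$ (hence $p$) to exceed $2\deg U_p$, which holds since $N\ge 16SD$. Then the repeat loop at Step \ref{alg-3-132} terminates with expected $O(1)$ iterations. In every branch we obtain $f\equiv \LC(f)f_1f_2 \bmod p$ with $f_1,f_2$ monic and coprime mod $p$, and $\LC(f)$ a unit mod $p$. In the $\alpha$-branch, the leading coefficient of $F_1+\alpha F_2$ needs care: if the degrees differ it is a single leading coefficient; if they are equal I would argue it is nonzero mod $p$ for all but one $\alpha$, and I expect this to be absorbed by the same counting.

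\textbf{Step 2: Hensel lifting and uniqueness.} By Lemma \ref{lm-12}, Algorithm \ref{alg-2} returns monic $f_1^*,f_2^*$ with $f\equiv\LC(f)f_1^*f_2^* \bmod p^\ell$. Over $\Z$ we also have the factorization $f=g\cdot(f/g)$, where $f/g$ is $H_1/g$, $H_2/g$, or $H_1/g+\alpha H_2/g$. Normalize $\bar g:=\LC(f)\,g/\LC(g)$, which lies in $\Q[y]$ with denominator dividing the unit $\LC(g)$ mod $p$. Then $\bar g/\LC(f)$ is monic modulo $p^\ell$ and reduces to $f_1$ mod $p$. By uniqueness of Hensel lifting for coprime monic factors (the lifts are unique mod $p^\ell$), $f_1^*\equiv g/\LC(g) \bmod p^\ell$.

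\textbf{Step 3: symmetric representation.} Since $\LC(g)\mid\LC(F_1(\mathbf{p},y))$, the polynomial $\LC(F_1(\mathbf{p},y))\,g/\LC(g)$ has integer coefficients. Its height is at most $|\LC(F_1(\mathbf{p},y))|\cdot\|g\|_\infty$. Using Lemma \ref{lm-11} for $g$ (up to a content factor, $g$ is $G_{(\mathbf{s},y)}(\mathbf{p},y)$ times an integer dividing $\LC$) and $\|H_1\|_\infty\le (d+1)^nP^DC$, I expect this to be bounded by $e^{nd}(d+1)^{2n}P^{2D}C^2<p^\ell/2$ by the choice of $\ell$ in Step \ref{alg-3-15}. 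Therefore $f_{\rm sym}$ equals this integer polynomial exactly, and its primitive part is $\Prim(g)=\Prim(G_{(\mathbf{s},y)}(\mathbf{p},y))$.

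\textbf{Main obstacle.} The delicate point is the height bound in Step 3. The bound on $\|g\|_\infty$ must be justified: $g$ may be $G_{(\mathbf{s},y)}(\mathbf{p},y)$ times an integer content factor, and only the normalized multiple $\LC(F_1)\,g/\LC(g)$ matters. I would handle this by writing that polynomial as $(\LC(F_1)/\LC(G_{(\mathbf{s},y)}(\mathbf{p},y)))\cdot G_{(\mathbf{s},y)}(\mathbf{p},y)$ and bounding the two factors separately, using Lemma \ref{lm-11} for the second.
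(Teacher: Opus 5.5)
Your proposal is correct and follows essentially the paper's proof. A lucky prime $p\nmid R\cdot\LC(H_1)\cdot\LC(H_2)$, which occurs with probability $\ge 7/8$ by Lemma~\ref{lm-9}, gives $\deg U_p=\deg\gcd(H_1,H_2)$ via Lemma~\ref{lm-2}; Lemma~\ref{lm-10} supplies the coprime factorization. After Hensel lifting, the symmetric representation recovers $\Delta\cdot G_{(\mathbf{s},y)}(\mathbf{p},y)$ with $\Delta=\LC(H_1)/\LC(G_{(\mathbf{s},y)}(\mathbf{p},y))$, bounded exactly as you propose by $\|H_1\|_\infty$ times the bound of Lemma~\ref{lm-11}.

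You are more careful than the paper about uniqueness of the Hensel lift and about the leading coefficient in the $\alpha$-branch. Note, though, that when $\deg H_1=\deg H_2$ the (at most one) $\alpha$ with $\LC(H_1)+\alpha\LC(H_2)\equiv 0 \bmod p$ is excluded only if the loop at Step~\ref{alg-3-132} also rejects it. The algorithm as written does not do this, so either modify the loop or accept a slightly smaller success probability.
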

\begin{proof}
To lift $U_p\in \F_p[y]$ to the  polynomial $\gcd(H_1,H_2)\in \Z[y]$, we should ensure that $\deg(U_p)=\deg (\gcd(H_1,H_2))$, where $H_i=F_i(\mathbf{p},y)$.
By Lemma \ref{lm-2}, $p$ should satisfy the following two conditions.
\begin{itemize}
\item $p\nmid \LC(H_1)$ and $p\nmid \LC(H_2)$;
\item $p\nmid \res_y(H_1/\gcd(H_1,H_2),H_2/\gcd(H_1,H_2))=:R$.
\end{itemize}

As the choice of $N$ in Step \ref{alg-3-2}, by Lemma \ref{lm-9},  $p\nmid R\cdot \LC(H_1)\cdot \LC(H_2)$ with probability $\geq \frac{7}{8}$.
Thus by (\lowercase\expandafter{\romannumeral5}) of Lemma \ref{lm-2}, $U_p$ computed in Step \ref{alg-3-5} has $\deg(U_p)=\deg (\gcd(H_1,H_2))$ with probability $\geq \frac78$.

In Step \ref{alg-3-6}-\ref{alg-3-13}, we construct three polynomials $f,f_1,f_2$ which satisfy $f\equiv\LC(f)\cdot f_1f_2 \mod p$, and where $f_1(=U_p)$ and $f_2$ are coprime.

The correctness of construction in Step \ref{alg-3-131} comes from Lemma \ref{lm-10}, since neither $U_p,\widetilde{A}$ nor $U_p,\widetilde{B}$ are coprime, the GCD of $U_p,\widetilde{A},\widetilde{B}$ is $1$.

Then in Step \ref{alg-3-16}, we lift $f_1,f_2$ to $f^*_1,f^*_2$ so that $f\equiv \LC(f)f^*_1f^*_2 \mod p^{\ell}$. As $G_{(\mathbf{s},y)}(\mathbf{p},y)$ divides $H_1$, $\LC(G_{(\mathbf{s},y)}(\mathbf{p},y))$ divides $\LC(H_1)$. 

Assume $\Delta\cdot \LC(G_{(\mathbf{s},y)}(\mathbf{p},y))=\LC(H_1)$ for some integer $\Delta$.
As $f^*_1$ is corresponding to $U_p$ and monic, then $\LC(H_1)\cdot f^*_1\equiv \Delta\cdot G_{(\mathbf{s},y)}(\mathbf{p},y)\mod p^{\ell}$.

The size of $\Delta$ is $\|H_1\|_{\infty}$, which is  $\le$ $(d+1)^nP^DC$. The size of the coefficients of $G_{(\mathbf{s},y)}(\mathbf{p},y)$ are $\le$ $e^{nd}(d+1)^nP^{D}C$ by Lemma \ref{lm-11}. Thus the size of the coefficients of $\Delta\cdot G_{(\mathbf{s},y)}(\mathbf{p},y)$ are at most $e^{nd}(d+1)^{2n}P^{2D}C^2$.

In Step \ref{alg-3-15}, as $\ell=\lceil\log _p (2e^{nd}(d+1)^{2n}P^{2D}C^2)\rceil+1$,
$p^{\ell}> 2\|\Delta\cdot G_{(\mathbf{s},y)}(\mathbf{p},y)\|_{\infty}$.
Therefore, if we let the max-norm of $\LC(H_1)\cdot f^*_1$ not greater than $p^{\ell}/2$, then $\Delta\cdot G_{(\mathbf{s},y)}(\mathbf{p},y)$ is equal to $\LC(H_1)\cdot f^*_1$.

In Step \ref{alg-3-17}, we compute the primitive part of $\Delta\cdot G_{(\mathbf{s},y)}(\mathbf{p},y)$ by divided by the GCD of its coefficients. Then we obtain the primitive part of $G_{(\mathbf{s},y)}(\mathbf{p},y)$.
\end{proof}\qed

\begin{lemma}\label{lm-13}
 The complexity of Algorithm \ref{alg-3} is $O^\sim(nSdD+SD^2\log P+SD\log C)$ bit operations.
\end{lemma}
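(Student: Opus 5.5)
The bound follows by walking through Algorithm \ref{alg-3} step by step and charging each step in terms of the sizes of $N$, $p$, $\ell$ and the input polynomials $H_1=F_1(\mathbf{p},y)$, $H_2=F_2(\mathbf{p},y)$. I first record the sizes. Both $H_i$ have degree at most $SD$ in $y$, and by the argument in Lemma \ref{lm-9} their coefficients have bit length $O(n\log(d+1)+D\log P+\log C)$. The parameter $N$ of Step \ref{alg-3-2} is $O^\sim(S^2D^2+nSD\log d+SD^2\log P+SD\log C)$, so $\log N$ and $\log p$ are $O(\log(nSDdP\log C))$, which is polylogarithmic and absorbed by the soft-Oh. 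The lifting precision of Step \ref{alg-3-15} satisfies
\[
\ell\log p=O(nd+n\log(d+1)+D\log P+\log C).
\]

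Next I charge the steps in order.
\begin{enumerate}
\item \textbf{Choosing $p$ (Step \ref{alg-3-3}).} Sample integers in $(N,2N]$ and test primality. By Lemma \ref{lm-3} the expected number of trials is $O(\log N)$, each costing polylog bit operations. The loop exits once $p$ fails to divide one of the leading coefficients. Because those coefficients have only $O(\log\kappa/\log N)$ prime factors above $N$, the expected number of repetitions is $O(1)$.
\item \textbf{Reductions mod $p$ (Step \ref{alg-3-4}).} There are $O(SD)$ coefficients, each of bit length $O(n\log d+D\log P+\log C)$. Reducing all of them costs $O^\sim(SD(n\log d+D\log P+\log C))$.
\item \textbf{GCD and cofactors (Step \ref{alg-3-5}) and the tests of Steps \ref{alg-3-6}--\ref{alg-3-13}.} Fast Euclid in $\F_p[y]$ on degree-$SD$ inputs costs $O^\sim(SD)$ field operations, hence $O^\sim(SD)$ bit operations.
\item \textbf{Choosing $\alpha$ (Step \ref{alg-3-132}).} By Lemma \ref{lm-10} at most $SD$ values of $\alpha$ are bad, while $p>N\ge 32SD$. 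Hence each draw succeeds with probability at least $1/2$, and the expected number of draws is $O(1)$. Forming $f=H_1+\alpha H_2$ over $\Z$ costs $O^\sim(SD(D\log P+n\log d+\log C))$.
\item \textbf{Hensel lifting (Step \ref{alg-3-16}).} By Lemma \ref{lm-12} this costs $O^\sim(SD\,\ell\log p)=O^\sim(SD(nd+D\log P+\log C))$.
\item \textbf{Symmetric representatives and primitive part (Step \ref{alg-3-17}).} Computing $f_{\rm sym}$ needs one multiplication of $\LC(H_1)$ against $O(SD)$ coefficients of bit length $\ell\log p$. The content is then a chain of $O(SD)$ integer GCDs on numbers of that size, followed by exact divisions. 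Both cost $O^\sim(SD\,\ell\log p)$.
\end{enumerate}

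Summing these contributions gives $O^\sim(nSdD+SD^2\log P+SD\log C)$, since $n\log d\le nd$, matching the claim. The step I expect to need the most care is bounding $\ell\log p$ together with the polylog sizes of $p$ and $N$. The argument has to check that $\log p$ contributes only logarithmic factors even though $N$ is quadratic in $SD$. It must also check that the $nd$ coming from the Gelfond bound $e^{nd}$ appears only once, multiplied by $SD$, and never as $nd\cdot D$. A secondary point is that the expected number of iterations of both repeat loops is constant. This rests on $N$ dominating $SD$ and on Lemmas \ref{lm-3} and \ref{lm-10}, so the bound is an expected cost.
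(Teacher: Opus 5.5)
Your proposal is correct and follows the same step-by-step accounting as the paper's proof: reduction mod $p$ at $O^\sim(SD(n\log d+D\log P+\log C))$, $O^\sim(SD\log p)$ for the gcds in $\F_p[y]$ and for the $\alpha$-loop, and $O^\sim(SD\,\ell\log p)$ for both the Hensel lifting and the final content computation, with $\ell\log p=O(nd+n\log(d+1)+D\log P+\log C)$ up to an additive $O(\log p)$ from the $+1$ in $\ell$, which is harmless. You are slightly more careful than the paper in charging the expected number of passes through the prime-selection loop and the cost of forming $f=H_1+\alpha H_2$, but neither changes the bound.
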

\begin{proof}
In Step \ref{alg-3-3}, finding  a prime in $[N+1,2N]$ costs $O^\sim(\log^3 N)$ bit operations \cite[The.18.8]{GaGer13}. As the height of $N$ is $O(\log S+\log D+\log n+\log \log P+\log \log C)$, the complexity is $O^\sim((\log S+\log D+\log n+\log \log P+\log \log C)^3)$ bit operations.

In Step \ref{alg-3-4}, we compute $A_p$ and $B_p$. As $\|F_1(\mathbf{p},y)\|_{\infty}$ and $\|F_2(\mathbf{p},y)\|_{\infty}$  are both $\le$ $(d+1)^nP^DC$, and $A_p,B_p$ have at most $SD+1$ terms, the complexity is $O^\sim(nSD\log d+SD^2\log P+SD\log C)$ bit operations.

In Step \ref{alg-3-5}, \ref{alg-3-6} and \ref{alg-3-9}, we compute $\gcd(A_p,B_p)$,  $A_p \div U_p$, $B_p \div U_p$, $\gcd(U_p,\widetilde{A})$ and $\gcd(U_p,\widetilde{B})$ in $\F_p[y]$. As the degree of $A_p,B_p,U_p,\widetilde{A},\widetilde{B}$ are $O(SD)$, the complexity is $O^\sim(SD\log p)$ bit operations~\cite[Cor.3.14]{P2010Algebraic}.
As $p$ is chosen from $[N+1,2N]$ and $\log N$ is $O(\log n+\log S+\log D+\log\log P+\log \log C)$, $O^\sim(SD\log p)$ is  $O^\sim(SD\log n+SD\log\log P+SD\log\log C)$ bit operations.
In Step \ref{alg-3-132}, by Lemma \ref{lm-10}, if we randomly choose $\alpha\in[0,p-1]$, then the probability of $\gcd(U_p,\widetilde{A}+\alpha \widetilde{B})=1$ is $\geq 1-\frac{\deg U_p}{p}$. As $p\geq 16SD$ and $\deg U_p\leq SD$, the probability is $\geq \frac{15}{16}$. As the bit complexity of computing $\gcd(U_p,\widetilde{A}+\alpha \widetilde{B})$ is $O^\sim(SD\log p)$, the expected complexity is also $O^\sim(SD\log p)$ bit operations, that is, $O^\sim(SD\log n+SD\log\log P+SD\log\log C)$ bit operations.
In Step \ref{alg-3-15}, as $\ell=\lceil\log_p (2e^{nd}(d+1)^{2n}P^{2D}C^2)\rceil$+1, $\ell \log p$ is in $O(\log (2e^{nd}(d+1)^{2n}P^{2D}C^2))$. The degrees of $f,f_1,f_2$ are $O(SD)$, by Lemma \ref{lm-12}, the complexity of Step \ref{alg-3-16} is $O^\sim(SD\ell \log p)$ bit operations, which is $O^\sim(SD\log (2e^{nd}(d+1)^{2n}P^{2D}C^2))=O^\sim(nSdD+SD^2\log P+SD\log C)$ bit operations.
In Step \ref{alg-3-17}, we compute at most $O(SD)$ integer GCDs. As the coefficients of $\LC(F_1(\mathbf{p},y))f_1$ is $O(p^{\ell})$, the complexity is $O^\sim(SD\ell \log p)$ bit operations which is the same as that in Step \ref{alg-3-16}.
\end{proof}\qed

\section{Verifying whether $H\overset{\text{?}}=\gcd(A,B)$}
Algorithm \ref{alg-1} is a randomized algorithm that outputs a candidate polynomial $H$. In this section, based on Theorem \ref{the-15}, we give a randomized algorithm to test whether $H\overset{\text{?}}=\gcd(A,B)$.

From Theorem \ref{the-15}, if $A,B$ are primitive and $\LC(H)>0$ then $H=\gcd(A,B)$ if and only if \begin{enumerate}
\item $\deg_{x_i}H=\deg_{x_i}\gcd(A,B),i=1,\dots,n$ and
\item $H|A$ and $H|B$.
\end{enumerate}

The framework of this section is as follows. In Section \ref{sec-4-2}, we illustrate how to compute the partial degrees of $\gcd(A,B)$.
In Section \ref{section-4-1}, we illustrate how to test $H|A$ and $H|B$, and then provide an algorithm to test if $H\overset{\text{?}}=\gcd(A,B)$.

Assume $F\in \Z[x_1,\dots,x_n]$ and $\mathbf{a}=(a_1,\dots,a_{n})\in\Z^n$. For $i=1,\dots,n$, denote
$$(\Breve{\mathbf{a}},x_i):=(a_1,\cdots,a_{i-1}, x_{i},a_{i+1},\dots,a_{n})$$
$$F(\Breve{\mathbf{a}},x_i):=F(a_1,\cdots,a_{i-1}, x_{i},a_{i+1},\dots,a_{n})$$

We give the following lemma, which will be used for our proofs below.
\begin{lemma}\label{lm-16}
Assume $A,B,G\in\Z[x_1,\dots,x_n]$ and $G=\gcd(A,B)$. Let $d_i=\deg_{x_i}G$. Then for each $i=1,\dots,n$,  there exists a polynomial $$\Gamma_i\in \Z[x_1,\dots,x_{i-1},x_{i+1},\dots,x_{n}]$$ so that for any prime $p\in\N$ and $\mathbf{a}=(a_1,\dots,a_{n})\in \mathbb{F}^{n}_p$, if $\Gamma_i(\mathbf{a})\mod p\neq 0$, then
$$\Phi_p(G(\Breve{\mathbf{a}},x_i)) \sim\gcd(\Phi_p(A(\Breve{\mathbf{a}},x_i)),\Phi_p(B(\Breve{\mathbf{a}},x_i)))$$ and $d_i=\deg_{x_i}\gcd(\Phi_p(A(\Breve{\mathbf{a}},x_i)),\Phi_p(B(\Breve{\mathbf{a}},x_i))).$
Furthermore, let $d$ be the partial degree bound of $A,B$ and $\|A\|_{\infty},\|B\|_{\infty}\leq C$. Then
(1) $\deg_{x_j}\Gamma_i\leq 2d^2+2d,j=1,2,\dots,n$ and (2) the leading coefficient of $\Gamma_i$ satisfies $|\LC(\Gamma_i)|\leq (d+1)^{(2n-1)d}e^{2nd^2}C^{2d+2}$.

\end{lemma}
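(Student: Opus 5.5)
Fix $i$ and view $A,B,G$ as polynomials in $x_i$ with coefficients in $\Z[x_1,\dots,x_{i-1},x_{i+1},\dots,x_n]$. Let $\bar A=A/G$ and $\bar B=B/G$; these are coprime. I would take
$$\Gamma_i:=\LC_{x_i}(A)\cdot\LC_{x_i}(B)\cdot \res_{x_i}(\bar A,\bar B),$$
with the convention of Eq. (\ref{eq-3}) when one of $\bar A,\bar B$ has $x_i$-degree $0$. The argument is the analogue of Lemma \ref{the-13}, done modulo $p$ and with evaluation at $\Breve{\mathbf a}$, so it rests on parts (ii) and (iv)–(v) of Lemma \ref{lm-2}.

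\textbf{Correctness.} Suppose $\Gamma_i(\mathbf a)\not\equiv 0 \pmod p$.
\begin{enumerate}
\item The leading coefficients in $x_i$ of $A$ and $B$ survive the map $\Phi_p$ and the evaluation. Hence $\deg_{x_i}$ is preserved for $A$ and $B$.
\item The leading coefficients of $G,\bar A,\bar B$ divide those of $A$ and $B$, so they also survive, and $x_i$-degrees are preserved for $G,\bar A,\bar B$ too. In particular $\deg_{x_i}\Phi_p(G(\Breve{\mathbf a},x_i))=d_i$.
\item The resultant commutes with $\Phi_p$ and with evaluation, because the leading coefficients do not drop (Lemma \ref{lm-2}). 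So $\res_{x_i}$ of the images of $\bar A,\bar B$ is nonzero in $\F_p$.
\item By (v) of Lemma \ref{lm-2}, these images are coprime in $\F_p[x_i]$.
\item Since $A=G\bar A$ and $B=G\bar B$, the image of $G$ is associate to $\gcd$ of the images of $A$ and $B$. This gives both claimed conclusions.
\end{enumerate}
One subtlety is that $\Gamma_i$ must be nonzero. This holds because $\bar A,\bar B$ are coprime in the UFD $\Z[x_1,\dots,x_n]$, so Sylvester's criterion applies, exactly as in Lemma \ref{the-13}.

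\textbf{Degree bound (1).} Each of $\LC_{x_i}(A)$ and $\LC_{x_i}(B)$ has $\deg_{x_j}\le d$. The Sylvester matrix of $\bar A,\bar B$ in $x_i$ has size at most $2d$, and its entries have $\deg_{x_j}\le d$, since factors of $A,B$ keep partial degrees at most $d$. So the resultant has $\deg_{x_j}\le 2d\cdot d$. Together these give $2d^2+2d$.

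\textbf{Coefficient bound (2).} This is the main computational step. I would bound the leading coefficient of $\Gamma_i$ as a product of the three leading coefficients, using multiplicativity of leading terms under a fixed monomial order. The two leading-coefficient factors are each at most $C$. For the resultant:
\begin{enumerate}
\item By Lemma \ref{lm-1}, $\|\bar A\|_\infty,\|\bar B\|_\infty\le e^{nd}C$.
\item Apply Lemma \ref{lm-5}, generalised to the variable $x_i$, to $\bar A,\bar B$ with height $e^{nd}C$ (dividing out contents if necessary, which only helps). This gives $\|\res\|_\infty\le (d+1)^{2nd-d}(e^{nd}C)^{2d}=(d+1)^{(2n-1)d}e^{2nd^2}C^{2d}$.
\item Multiplying by $C^2$ gives exactly the stated bound.
\end{enumerate}
The point to be careful about is that Lemma \ref{lm-5} assumes primitive coprime inputs with partial degree bound $d$. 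I would check that $\bar A,\bar B$ meet these hypotheses, or that the Hadamard argument goes through without primitivity. I expect the latter, since the proof of Lemma \ref{lm-5} uses only the height and term-count bounds.
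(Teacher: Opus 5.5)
Your proposal is correct and follows the paper's proof: the same $\Gamma_i=\LC_{x_i}(A)\,\LC_{x_i}(B)\,\res_{x_i}(A/G,B/G)$, the same $2d\cdot d+2d$ degree count, and the same coefficient bound obtained from Lemma~\ref{lm-1} and Lemma~\ref{lm-5} and then multiplied by $C^2$. Two small points in your favour: your one-step specialization (reduce mod $p$ and evaluate at $\Breve{\mathbf a}$ together, then use coprimality of the images of $A/G$ and $B/G$) is slightly cleaner than the paper's detour through $\gcd(\Phi_p(A),\Phi_p(B))=\Phi_p(G)\kappa$ and Lemma~\ref{the-13}, and your remark that the Hadamard argument in Lemma~\ref{lm-5} does not need primitivity covers a hypothesis the paper applies to $A/G$ and $B/G$ without checking.
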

\begin{proof}
Let $R=\res_{x_i}(A/G,B/G)$ and $$\Gamma_i:=R\cdot \LC_{x_i}(A)\cdot \LC_{x_i}(B)\in \Z[x_1,\dots,x_{i-1},x_{i+1},\dots,x_n].$$ By Lemma \ref{lm-5}, $\Gamma_i\neq 0$.
If $\Gamma_i \mod p\neq 0$, then by (\lowercase\expandafter{\romannumeral5}) of Lemma \ref{lm-2}, $\deg_{x_i}\gcd(\Phi_p(A)/\Phi_p(G),\Phi_p(B)/\Phi_p(G))=0$, which means $\gcd(\Phi_p(A),\Phi_p(B))=\Phi_p(G)\cdot \kappa$ for some $\kappa\in\F_p[x_1,\dots,x_{i-1},x_{i+1},\dots,x_n]$.
Let $\mathbf{a}=(a_1,\dots,a_{n})\in\F_p^{n}$.
If $\Gamma_i(\mathbf{a}) \mod p\neq 0$,
then by Lemma \ref{the-13}, $(a_1,\dots,a_{i-1},a_{i+1},\dots,a_n)$ is a good point for $\Phi_p(A)$ and $\Phi_p(B)$, therefore, $$\deg_{x_i}\Phi_p(G(\Breve{\mathbf{a}},x_i))=\deg_{x_i}\gcd(\Phi_p(A(\Breve{\mathbf{a}},x_i),\Phi_p(B(\Breve{\mathbf{a}},x_i))$$ as $\deg_{x_i}\kappa=0$.
So $\Phi_p(G(\Breve{\mathbf{a}},x_i))\sim\gcd(\Phi_p(A(\Breve{\mathbf{a}},x_i)),\Phi_p(B(\Breve{\mathbf{a}},x_i))$.  As $\LC_{x_i}(G)(\mathbf{a})\mod p\neq 0$, $d_i=\deg_{x_i}\Phi_p(G(\Breve{\mathbf{a}},x_i))=\deg_{x_i}\gcd(\Phi_p(A(\Breve{\mathbf{a}},x_i)),\Phi_p(B(\Breve{\mathbf{a}},x_i)))$. We proved it.

As $d$ is the partial degree bound of $A,B$,  by the definition of resultant, the partial degree bound of $R$ is $\leq 2d^2$. The partial degree bound of $\LC_{x_i}(A)$ and $\LC_{x_i}(B)$ is $\leq d$, so the partial degree bound of $\Gamma_i$ is $\leq 2d^2+2d$.
By Lemma \ref{lm-1}, $\|A/G\|_{\infty},\|B/G\|_{\infty}\leq e^{nd}C$.
By Lemma \ref{lm-5}, $\|R\|_{\infty}\leq (d+1)^{2nd-d}e^{2nd^2}C^{2d}$.
So the absolute value of the leading coefficient of $\Gamma_i$ is $\leq (d+1)^{2nd-d}e^{2nd^2}C^{2d+2}$.
\end{proof}\qed

\subsection{Computing the partial degrees of the GCD}\label{sec-4-2}
Algorithm \ref{alg-41} is used to find the partial degrees of $\gcd(A,B)$.
The main idea is as follows: for each $x_i$, randomly pick a prime $p$ and $\mathbf{a}\in \F_p^n$, and then compute the degree of $\gcd(\Phi_p(A(\Breve{\mathbf{a}},x_i)),\Phi_p(B(\Breve{\mathbf{a}},x_i))$ which equals $\deg_{x_i}G$ with high probability. The following theorem shows how to pick $p$ and ${\mathbf{a}}$ to maintain a high success rate.

\begin{theorem}\label{the-3}
Let $A,B,G\in \Z[x_1,\dots,x_n]$ be polynomials and $G=\gcd(A,B)$. Let $d_i=\deg_{x_i}G$ and $\|A\|_{\infty},\|B\|_{\infty}\leq C$. Let $N=\max\{ 12\lceil(2n^2d-nd)\ln(d+1)+2n^2d^2+(2d+2)n\ln C\rceil,20n^2d^2+20n^2d\}$. If we randomly choose a prime $p$ in $[N+1,2N]$ and randomly choose a vector $\mathbf{a}=(a_1,\dots,a_{n})\in \F_p^{n}$ then
$${\rm Prob}[d_i=\gcd(\Phi_p(A(\Breve{\mathbf{a}},x_i)),\Phi_p(B(\Breve{\mathbf{a}},x_i))),i=1,\dots,n]\geq \frac{3}{4}.$$
\end{theorem}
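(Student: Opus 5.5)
We intend to prove the theorem using the polynomials $\Gamma_1,\dots,\Gamma_n$ from Lemma \ref{lm-16}, which certify a good evaluation. By that lemma, if $\Gamma_i(\mathbf{a})\not\equiv 0 \pmod p$, then the degree of $\gcd(\Phi_p(A(\Breve{\mathbf{a}},x_i)),\Phi_p(B(\Breve{\mathbf{a}},x_i)))$ equals $d_i$. So it suffices to show that $\Gamma:=\prod_{i=1}^n\Gamma_i$ satisfies $\Gamma(\mathbf{a})\not\equiv 0 \pmod p$ with probability at least $3/4$. We split the failure event into two parts:
\begin{enumerate}
\item the prime $p$ kills the polynomial, i.e.\ $\Phi_p(\Gamma_i)=0$ for some $i$;
\item $\Phi_p(\Gamma_i)\neq 0$ for every $i$, but the random point $\mathbf{a}$ is a zero of $\Phi_p(\Gamma)$.
\end{enumerate}
We aim to bound the first by $1/6$ and the second by $1/12$, which gives total failure probability at most $1/4$. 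A union bound over $i$, distributing the allowance as $1/(6n)$ and $1/(12n)$ per index, also works; the constants in $N$ appear designed for this split.

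For the first part we do not need all coefficients of $\Gamma_i$ to survive. It is enough that $p\nmid \LC(\Gamma_i)$ for each $i$, where $\LC$ is taken in the lexicographic order, since then $\Phi_p(\Gamma_i)\neq 0$. By Lemma \ref{lm-16}(2),
\[
|\LC(\Gamma_i)|\le (d+1)^{(2n-1)d}e^{2nd^2}C^{2d+2},
\]
so the product $\kappa=\prod_i|\LC(\Gamma_i)|$ satisfies
\[
\ln\kappa\le n\bigl((2n-1)d\ln(d+1)+2nd^2+(2d+2)\ln C\bigr)=(2n^2d-nd)\ln(d+1)+2n^2d^2+(2d+2)n\ln C .
\]
This is exactly the quantity inside the first term of $N$. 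As in the proof of Lemma \ref{lm-9}, $\kappa$ has at most $\ln\kappa/\ln N$ prime divisors in $(N,2N]$. By Lemma \ref{lm-3} there are at least $\frac12 N/\ln N$ primes in that interval. Hence the probability that a random $p$ divides $\kappa$ is at most $2\ln\kappa/N\le 2/12=1/6$.

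For the second part, we condition on $\Phi_p(\Gamma)\neq0$ and apply Schwartz--Zippel (Lemma \ref{lm-6}) over the field $\F_p$, with $S=\F_p$ and $|S|=p>N$. By Lemma \ref{lm-16}(1) each $\Gamma_i$ has partial degrees at most $2d^2+2d$ in at most $n-1$ variables, so its total degree is at most $n(2d^2+2d)$. Hence $\deg\Gamma\le n^2(2d^2+2d)$. The failure probability is then at most
\[
\frac{2n^2d^2+2n^2d}{N}\le\frac{2n^2(d^2+d)}{20n^2(d^2+d)}=\frac1{10}.
\]
Together the two parts give a failure probability of at most $1/6+1/10<1/4$.

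The main obstacle is the bookkeeping for the first part. We must make sure that nonvanishing of $\Gamma_i$ modulo $p$ really follows from the leading-coefficient condition alone, and that the hypotheses of Lemma \ref{lm-16} apply. The delicate points in those hypotheses are that $\Phi_p$ preserves the $x_i$-leading coefficients and that $\Phi_p$ of the resultant is the resultant of the reductions; these are guaranteed because $\Gamma_i$ contains the factors $\LC_{x_i}(A)\LC_{x_i}(B)$, and we can simply cite Lemma \ref{lm-16} for them. We also need $N\ge 6$ to invoke Lemma \ref{lm-3}, which holds trivially here.
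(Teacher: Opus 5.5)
Your decomposition is the paper's: the same $\Gamma=\prod_i\Gamma_i$ from Lemma~\ref{lm-16}, the same count of primes in $[N+1,2N]$ dividing $\prod_i|\LC(\Gamma_i)|=|\LC(\Gamma)|$ giving $\frac16$, and the same Schwartz--Zippel estimate $\deg\Gamma/p<\frac{2n^2d^2+2n^2d}{20n^2d^2+20n^2d}=\frac1{10}$.

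The step that fails is the final combination. Since $\frac16+\frac1{10}=\frac4{15}>\frac14$, adding the two failure bounds does not prove the theorem. Your announced target of $\frac1{12}$ for the second part is not what these constants give. The per-index split does not rescue it either. For a single $i$ you only get $\deg\Gamma_i/p<\frac{(n-1)(2d^2+2d)}{20n^2(d^2+d)}=\frac{n-1}{10n^2}$, and this is at most $\frac1{12n}$ only when $n\le 6$.

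The repair is to keep the factor that comes from the conditioning you already set up, rather than bounding the second event by $\frac1{10}$ outright. Let $q:=\mathrm{Prob}[\Phi_p(\Gamma)=0]\le\frac16$. Then the failure probability is at most $q+(1-q)\cdot\frac1{10}=\frac1{10}+\frac{9}{10}q\le\frac1{10}+\frac{9}{60}=\frac14$. Equivalently, the success probability is at least $\frac56\cdot\frac9{10}=\frac34$, which is exactly how the paper concludes. With this one-line change your proof is complete.
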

\begin{proof}
By Lemma \ref{lm-16}, for each $i=1,\dots,n$, there is a non-zero polynomial $\Gamma_i$ for $A$ and $B$ so that if $\Gamma_i(\mathbf{a}) ~{\rm mod}~ p\neq 0$, then  $d_i=\gcd(\Phi_p(A(\Breve{\mathbf{a}},x_i)),\Phi_p(B(\Breve{\mathbf{a}},x_i)))$. Let $\Gamma:=\prod_{i=1}^n\Gamma_i$. Therefore, it suffices to show that $\Gamma(\mathbf{a}) \mod p\neq 0$ with probability  $\geq \frac{3}{4}$.
As $|\LC(\Gamma_i)|\leq (d+1)^{2nd-d}e^{2nd^2}C^{2d+2}$, then $|\LC(\Gamma)|\leq (d+1)^{2n^2d-nd}e^{2n^2d^2}C^{2nd+2n}:=\kappa$. Thus, there are at most $\ln \kappa/\ln N$ different primes in $[N+1,2N]$ vanishing the leading coefficient.
By Lemma \ref{lm-3}, there are at least $\frac{1}{2}N/\ln N$ primes in $[N+1,2N]$.
Therefore, when we randomly choose a prime $p$ in $[N+1,2N]$, the probability of $p$ dividing the leading coefficient of $\Gamma$ is $\leq \frac{\ln \kappa/\ln N}{\frac{1}{2}N/\ln N}=\frac{\ln \kappa}{\frac{1}{2}N}\leq \left((2n^2d-nd)\ln(d+1)+2n^2d^2+(2d+2)n\ln C\right)/ \frac{1}{2}N\leq\frac16$.
Thus with probability $\geq \frac56$, $\Phi_p(\Gamma)\neq0$. Assume $\Phi_p(\Gamma)\neq0$, as $\deg \Gamma\leq 2n^2d^2+2n^2d$ and $p>N\geq 20n^2d^2+20n^2d$, by Lemma \ref{lm-6}, $\Phi_p(\Gamma(\mathbf{a}))\neq0$ with probability $\geq 1-\frac{\deg \Gamma}{N}\geq 1-\frac{2n^2d^2+2n^2d}{20n^2d^2+20n^2d}=\frac{9}{10}$.
So the probability that for all $i=1,2,\dots,n$, $d_i=\gcd(\Phi_p(A(\Breve{\mathbf{a}},x_i)),\Phi_p(B(\Breve{\mathbf{a}},x_i)))$ is $\geq \frac{9}{10}\cdot \frac{5}{6}=\frac{3}{4}$.

\end{proof}\qed

\begin{algorithm}[H]
\caption{Computing partial degrees of the GCD.}\label{alg-41}
\begin{algorithmic}[1]
\REQUIRE $A,B\in \Z[x_1,\dots,x_n]$; a tolerance $0<\varepsilon<1$.
\ENSURE The partial degrees $\deg_{x_i}\gcd(A,B),i=1,\dots,n$ with probability $\geq 1-\varepsilon$.

\STATE Find the partial degree bound $d:=\max_{i=1}^n \max(\deg_{x_i}A,\deg_{x_i}B)$ and coefficient bound $C:=\max(\|A\|_\infty,  \|B\|_\infty)$.

\STATE Let $N:=\max\{ 12
\lceil (2n^2d-nd)\ln(d+1)+2n^2d^2+(2nd+2n)\ln C\rceil,20n^2d^2+20n^2d\}$ and $K:=\lceil \frac{\log_2 \frac{1}{\varepsilon}}{2}\rceil$.\label{alg-41-2}\

\STATE Let $d_i:=d,i=1,2,\dots,n$.\label{alg-41-3}

\FOR{$\ell=1,\dots, K$\label{alg-41-4}}
\STATE Randomly choose a prime $p$ in $[N+1,2N]$ and randomly choose $\mathbf{a}=(a_1,\dots,a_{n})\in \F^{n}_p$.\label{alg-41-5}

\FOR{$i=1,2\dots,n$\label{alg-41-6}}
\IF{$\LC_{x_i}(A)(\mathbf{a}) \mod p=0$ or $\LC_{x_i}(B)(\mathbf{a})\mod p=0$}
\STATE break.
\ENDIF

\STATE Compute the polynomials $u_i:=A(\Breve{\mathbf{a}},x_i)\mod p$ and $v_i:=B(\Breve{\mathbf{a}},x_i)\mod p$ and $g_i:=\gcd(u_i,v_i)$ in $\F_p[x_i]$.\label{alg-41-10}

\IF{$d_i>\deg_{x_i}g_i$}
\STATE $d_i:=\deg_{x_i}g_i$\label{alg-41-12}
\ENDIF

\ENDFOR \label{alg-41-14}
\ENDFOR \label{alg-41-15}
\RETURN $d_i,i=1,\dots,n$.
\end{algorithmic}
\end{algorithm}

\begin{theorem}\label{the-16}
Algorithm \ref{alg-41} works correctly as specified.
\end{theorem}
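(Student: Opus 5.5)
The proof will combine two observations: the values $d_i$ produced by Algorithm \ref{alg-41} can never fall below the true partial degrees, and Theorem \ref{the-3} bounds the failure probability of each independent round.

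\textbf{Step 1: the computed degrees are never too small.} Fix a round and an index $i$ for which the algorithm does not break. Then $\LC_{x_i}(A)(\mathbf{a})\not\equiv 0$ and $\LC_{x_i}(B)(\mathbf{a})\not\equiv 0 \pmod p$. Since $G\mid A$, we have $\LC_{x_i}(G)\mid \LC_{x_i}(A)$, so $\LC_{x_i}(G)(\mathbf{a})\not\equiv 0 \pmod p$. Hence $\Phi_p(G(\Breve{\mathbf{a}},x_i))$ has degree exactly $\deg_{x_i}G$. It also divides both $u_i$ and $v_i$, and therefore divides $g_i$, giving $\deg g_i\ge \deg_{x_i}G$. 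The variables $d_i$ start at $d\ge \deg_{x_i}G$ and are only ever replaced by such $\deg g_i$. So at every moment $d_i\ge \deg_{x_i}G$, and an update can only move $d_i$ toward the true value.

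\textbf{Step 2: one successful round fixes every degree.} Call a round successful if the event of Theorem \ref{the-3} holds for its $(p,\mathbf{a})$. By the proof of that theorem, this happens whenever $\Gamma(\mathbf{a})\not\equiv0 \pmod p$. This condition also guarantees that no leading coefficient $\LC_{x_i}(A)(\mathbf{a})$ or $\LC_{x_i}(B)(\mathbf{a})$ vanishes mod $p$, because these are factors of $\Gamma$. So no break occurs, and $\deg g_i=\deg_{x_i}G$ for all $i$. After this round every $d_i$ equals $\deg_{x_i}G$, and by Step 1 later rounds cannot change that. I still need to check that the $N$ in step \ref{alg-41-2} matches the $N$ of Theorem \ref{the-3}; the two expressions $(2d+2)n\ln C$ and $(2nd+2n)\ln C$ agree.

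\textbf{Step 3: probability.} The rounds use independent random choices, and each fails with probability at most $1/4$ by Theorem \ref{the-3}. The algorithm is wrong only if all $K$ rounds fail, which has probability at most $(1/4)^K=2^{-2K}\le 2^{-\log_2(1/\varepsilon)}=\varepsilon$, using $K=\lceil \log_2(1/\varepsilon)/2\rceil$.

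\textbf{Main obstacle.} The delicate point is the ``break'' statement. It must not cause a wrong (too small) $d_i$; this is handled by Step 1, since a break only skips updates. It must also not occur in a successful round; this is handled by the inclusion of the leading coefficients as factors of $\Gamma_i$, which I will cite explicitly from the construction in Lemma \ref{lm-16}.
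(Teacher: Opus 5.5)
Your proposal is correct and follows the same route as the paper: each round succeeds with probability at least $3/4$ by Theorem \ref{the-3}, and this is amplified to $1-(1/4)^K\ge 1-\varepsilon$ over the $K$ independent rounds. Your Step 1 (the leading-coefficient check forces $\deg g_i\ge\deg_{x_i}G$, so taking minima can never undershoot) and your handling of the break make explicit what the paper's brief proof leaves implicit. One bookkeeping point: define a successful round directly as $\Gamma(\mathbf{a})\not\equiv 0 \pmod p$, because that is the event whose probability the proof of Theorem \ref{the-3} actually bounds by $3/4$, whereas the theorem's stated event does not by itself rule out a break.
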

\begin{proof}
In Step \ref{alg-41-3}, we start with $d_i:=d$ because $d$ is the upper bound of the partial degrees.
For each $\ell$ in Step \ref{alg-41-4}, by Theorem \ref{the-3}, the probability of $d_i=\deg_{x_i}\gcd(A,B),i=1,\dots,n$ in Step \ref{alg-41-12} is $\geq \frac34$. The algorithm computes $K$ times in total, so the probability of correctly computing  $d_i,i=1,\dots,n$ is $\geq 1-(\frac14)^K\geq 1-(\frac14)^{\frac{\log_2 \frac{1}{\varepsilon}}{2}}=1-\varepsilon$. We proved it.
\end{proof}\qed

\begin{theorem}\label{the-17}
Algorithm \ref{alg-41} costs $O^\sim(nt\log C\log\frac{1}{\varepsilon}+n^2t\log^2 d\log\frac{1}{\varepsilon}\log\log C+nd\log\frac{1}{\varepsilon}\log\log C)$ bit operations, where $t=\#A+\#B$.
\end{theorem}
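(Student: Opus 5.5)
The plan is to bound the cost of one iteration of the outer loop (Steps \ref{alg-41-5}--\ref{alg-41-14}) and multiply by $K=\lceil \frac12\log_2\frac1\varepsilon\rceil=O(\log\frac1\varepsilon)$. The preliminary steps are cheap. Computing $d$ and $C$ in Step 1 means scanning all exponents and coefficients, which costs $O^\sim(nt\log d+t\log C)$. Computing $N$ in Step \ref{alg-41-2} is negligible. From the definition of $N$ I record that
\[
\log N=O(\log n+\log d+\log\log C),
\]
so every arithmetic operation in $\F_p$ costs $O^\sim(\log n+\log d+\log\log C)$ bit operations, and that is essentially polylogarithmic.

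\textbf{Cost of choosing the prime and the point.} By \cite[Thm.~18.8]{GaGer13}, choosing a random prime in $[N+1,2N]$ costs $O^\sim(\log^3N)$, and this is absorbed into the soft-Oh. Choosing $\mathbf a\in\F_p^n$ costs $O(n\log p)$ random bits.

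\textbf{Reducing the coefficients.} This is the part that produces the $nt\log C$ term. For each term $c M$ of $A$ and $B$, I first reduce $c$ modulo $p$ at a cost of $O^\sim(\log C)$. This is done once per iteration, independently of $i$, so over all $t$ terms it gives $O^\sim(t\log C)$ per iteration. I will present the reduction as being charged once for each of the $n$ variables, which gives the stated $nt\log C$. It could also be done once, but the stated bound is an upper bound either way.

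\textbf{Powering the evaluation points.} I precompute the powers $a_j^{k}\bmod p$ by repeated squaring, costing $O(\log d)$ multiplications each. For a fixed $i$, each term then requires a product of $n-1$ powered values. This gives $O(nt\log d)$ field operations per $i$, hence $O(n^2t\log d)$ per iteration, each costing $O^\sim(\log n+\log d+\log\log C)$. This yields the $n^2t\log^2d\log\log C$ term up to polylog factors.

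\textbf{The univariate work.} Assembling $u_i,v_i$ of degree at most $d$ is done by bucketing the terms by their $x_i$-exponent. The leading-coefficient checks come for free from this. Computing $g_i=\gcd(u_i,v_i)$ in $\F_p[x_i]$ uses fast Euclid \cite[Cor.~3.14]{P2010Algebraic} at a cost of $O^\sim(d\log p)$. Summed over $i$, this gives $O^\sim(nd(\log n+\log d+\log\log C))$, which is the $nd\log\log C$ term.

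\textbf{Conclusion and main obstacle.} Multiplying the per-iteration bound by $K=O(\log\frac1\varepsilon)$ gives the claim. The main obstacle is bookkeeping rather than mathematics. I must make sure the evaluation of the $(n-1)$-variable monomials is counted with the $\log d$ exponentiation cost and the $\log p$ word size, and that no hidden factor of $\log C$ multiplies the $n^2t$ term. To guarantee the latter, I reduce the coefficients modulo $p$ before any multiplication by the evaluated monomials.
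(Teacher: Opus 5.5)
Your proposal is correct and follows the paper's own accounting. In each outer iteration, building $u_i,v_i$ costs $O^\sim(nt\log d\log p+t\log C)$ for each $i$ and the univariate GCD costs $O^\sim(d\log p)$; summing over $i$, multiplying by $K=O(\log\frac1\varepsilon)$, and using $\log p=O(\log n+\log d+\log\log C)$ gives the stated bound. Your extra details (reducing coefficients mod $p$ before multiplying, repeated squaring, and bucketing by the $x_i$-exponent) only make explicit what the paper's one-line evaluation cost leaves implicit, and they do not change the bound.
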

\begin{proof}
We analyse the complexity.
In Step \ref{alg-41-5}, choosing a prime $p$ in $(N,2N]$ requires $O(\log^3N)$ bit operations. Choosing $\mathbf{a}$ costs $O(n\log N)$ bit operations.
In Step \ref{alg-41-10}, computing $u_i$ and $v_i$ costs $O^\sim(nt\log d\log p+t\log C)$
bit operations. Computing $\gcd(u_i,v_i)$ costs $O^\sim(d\log p)$ bit operations. Thus the complexity of Step \ref{alg-41-5}-\ref{alg-41-14} is $O^\sim(n^2t\log d\log p+nt\log C+nd\log p)$ bit operations.
As we do Steps \ref{alg-41-4}-\ref{alg-41-15} $K$ times, the complexity is $O^\sim(n^2tK\log d\log p+ntK\log C+ndK\log p)$ bit operations, that  is, $O^\sim(nt\log C\log \frac{1}{\varepsilon}+n^2t\log^2 d\log\log C\log\frac{1}{\varepsilon}+nd\log\log C\log\frac{1}{\varepsilon})$ bit operations.
\end{proof}\qed

\subsection{An algorithm to verify whether $H\overset{\text{?}}=\gcd(A,B)$}\label{section-4-1}
Although testing $H|A$ and $H|B$ is very fast in practical computation, there seems to be no deterministic algorithm with sparse complexity.
In this section, we first give a randomized technique with sparse complexity to do this.
Then, combined with Algorithm \ref{alg-41}, we give a randomized algorithm to verify whether candidate $H$ is the GCD of $A,B$.

To test $H|A$, the main idea is to reduce it to test $\Phi_p(H(\Breve{\mathbf{a}},x_{i_0})) | \Phi_p(A(\Breve{\mathbf{a}},x_{i_0}))$ for some $i_0\in\{1,2,\dots,n\}$, some prime $p$ and some $\mathbf{a}\in\Z_p^n$, where
$\Phi_p(H(\Breve{\mathbf{a}},x_{i_0}))$, $\Phi_p(A(\Breve{\mathbf{a}},x_{i_0}))$ (cf. Eq. (\ref{eq-2}))
are univariate polynomials in $\Z_p[x_{i_0}]$.
We have the following theorem to ensure the success rate of reduction.

\begin{theorem}\label{the-4}
Let $H,A\in \Z[x_1,\dots,x_n]$ be primitive polynomials and $H\nmid A$.  Let $d$ be the partial degree bound of $H,A$ and $\|H\|_{\infty},\|A\|_{\infty}\leq e^{nd}C$. Let $N=\max\{ 12\lceil(2nd-d)\ln(d+1)+4nd^2+2nd+(2d+2)\ln C\rceil,20nd^2+20nd\}$. If a prime $p$ is chosen at random from $[N+1,2N]$ and we randomly choose a vector $\mathbf{a}=(a_1,\dots,a_{n})\in \F_p^{n}$, then, with probability $\geq \frac{3}{4}$, there exists $i_0\in \{1,\dots,n\}$ so that $\Phi_p(H(\Breve{\mathbf{a}},x_{i_0}))$ does not divide $\Phi_p(A(\Breve{\mathbf{a}},x_{i_0}))$.
\end{theorem}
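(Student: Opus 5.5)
The plan is to reduce the statement to Lemma~\ref{lm-16}, applied to the pair $H,A$ in place of $A,B$, for one well-chosen variable. Let $G'=\gcd(H,A)$. Since $H\nmid A$, $H/G'$ is not a unit. It also cannot be a nonzero integer constant $c\neq\pm1$, because then $H=cG'$ and $H$ would not be primitive. Hence $H/G'$ has positive degree in some variable $x_{i_0}$, so $\deg_{x_{i_0}}G'<\deg_{x_{i_0}}H$. This index $i_0$ depends only on $H$ and $A$, not on the random choices. Its existence is all the statement asks for, so we never need to know it explicitly.

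Next I would apply Lemma~\ref{lm-16} with $(A,B)$ replaced by $(H,A)$, index $i=i_0$, and coefficient bound $C':=e^{nd}C$. This gives a nonzero $\Gamma_{i_0}=\res_{x_{i_0}}(H/G',A/G')\cdot\LC_{x_{i_0}}(H)\cdot\LC_{x_{i_0}}(A)$ with the following property. If $\Gamma_{i_0}(\mathbf{a})\not\equiv0\pmod p$, then
\[
\deg_{x_{i_0}}\gcd\bigl(\Phi_p(H(\Breve{\mathbf{a}},x_{i_0})),\Phi_p(A(\Breve{\mathbf{a}},x_{i_0}))\bigr)=\deg_{x_{i_0}}G'.
\]
Since $\LC_{x_{i_0}}(H)$ divides $\Gamma_{i_0}$, its value at $\mathbf{a}$ is nonzero mod $p$. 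Therefore $\deg\Phi_p(H(\Breve{\mathbf{a}},x_{i_0}))=\deg_{x_{i_0}}H>\deg_{x_{i_0}}G'$. If $\Phi_p(H(\Breve{\mathbf{a}},x_{i_0}))$ divided $\Phi_p(A(\Breve{\mathbf{a}},x_{i_0}))$, the gcd would have degree $\deg_{x_{i_0}}H$, a contradiction. So it suffices to show that $\Gamma_{i_0}(\mathbf{a})\not\equiv0\pmod p$ with probability $\ge\frac34$.

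The probability estimate copies the proof of Theorem~\ref{the-3}, now with a single factor $\Gamma_{i_0}$ instead of a product of $n$ factors. Lemma~\ref{lm-16} with bound $C'$ gives
\[
|\LC(\Gamma_{i_0})|\le(d+1)^{(2n-1)d}e^{2nd^2}(e^{nd}C)^{2d+2}=(d+1)^{(2n-1)d}e^{4nd^2+2nd}C^{2d+2}=:\kappa.
\]
Here $\ln\kappa$ is exactly the quantity inside the ceiling in the definition of $N$. At most $\ln\kappa/\ln N$ primes in $[N+1,2N]$ divide $\LC(\Gamma_{i_0})$. By Lemma~\ref{lm-3} there are at least $\frac12N/\ln N$ primes in that interval, so $\Phi_p(\Gamma_{i_0})=0$ with probability at most $2\ln\kappa/N\le\frac16$.

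Conditioned on $\Phi_p(\Gamma_{i_0})\neq0$, the total degree of $\Gamma_{i_0}$ is at most $n(2d^2+2d)$ by part (1) of Lemma~\ref{lm-16}. Lemma~\ref{lm-6} over $\F_p$ with $p>N\ge20nd^2+20nd$ then gives $\Phi_p(\Gamma_{i_0}(\mathbf{a}))\ne0$ with probability at least $\frac{9}{10}$. Multiplying, the success probability is at least $\frac56\cdot\frac9{10}=\frac34$.

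The main obstacle is checking that Lemma~\ref{lm-16} really transfers with the inflated coefficient bound $e^{nd}C$. Its proof applies Lemma~\ref{lm-1} to the cofactors $H/G'$ and $A/G'$ using the input bound; with bound $C'$ this yields the factor $e^{2nd^2}C'^{2d}$, and then $C'^{2}$ from the leading coefficients. Together these produce precisely the exponent $4nd^2+2nd$ appearing in $N$. I would redo that bookkeeping carefully. I would also confirm that primitivity of $H$ is used only in the step that rules out $H/G'$ being a nonunit integer constant.
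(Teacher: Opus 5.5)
Your proposal is correct and follows the paper's proof essentially step for step. You pick $i_0$ with $\deg_{x_{i_0}}\gcd(H,A)<\deg_{x_{i_0}}H$ using primitivity of $H$, apply Lemma~\ref{lm-16} to $(H,A)$ with the inflated bound $e^{nd}C$ to get $\kappa=(d+1)^{(2n-1)d}e^{4nd^2+2nd}C^{2d+2}$, and split the probability as $\frac56\cdot\frac9{10}$, all as the paper does. Your remarks that $i_0$ does not depend on the random choices, and that $\LC_{x_{i_0}}(H)\mid\Gamma_{i_0}$ forces $\deg\Phi_p(H(\Breve{\mathbf{a}},x_{i_0}))=\deg_{x_{i_0}}H$, spell out two points the paper leaves implicit.
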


\begin{proof}
Denote $g:=\gcd(H,A)$. Towards a contradiction assume $\deg_{x_i}g=\deg_{x_i}H,i=1,\dots,n$. Then $g=H$ as $H$ is primitive. So $H|A$, this contradicts $H\nmid A$. So there exists $i_0\in \{1,2,\dots,n\}$ so that $\deg_{x_{i_0}}g<\deg_{x_{i_0}}H$. By Lemma \ref{lm-16}, there exists a non-zero polynomial $\Gamma$
so that if $\Gamma(\mathbf{a}) \mod p\neq 0$, then $$\Phi_p(g(\Breve{\mathbf{a}},x_{i_0})) \sim\gcd(\Phi_p(H(\Breve{\mathbf{a}},x_{i_0}),\Phi_p(A(\Breve{\mathbf{a}},x_{i_0})))$$ and $\deg_{x_{i_0}}\Phi_p(g(\Breve{\mathbf{a}},x_{i_0}))=\deg_{x_{i_0}}g<\deg_{x_{i_0}}\Phi_p(H(\Breve{\mathbf{a}},x_{i_0}))$. Thus, $\Phi_p(H(\Breve{\mathbf{a}},x_{i_0})) \nmid \Phi_p(A(\Breve{\mathbf{a}},x_{i_0}))$.

It suffices to show that $\Gamma(\mathbf{a}) \mod p\neq 0$ with probability  $\geq \frac{3}{4}$. By Lemma \ref{lm-16}, the partial degree bound of $\Gamma$ is $\leq 2d^2+2d$.
As $\|H\|_{\infty},\|A\|_{\infty}\leq e^{nd}C$, by Lemma \ref{lm-16}, $|\LC(\Gamma)|\leq (d+1)^{2nd-d}e^{4nd^2+2nd}C^{2d+2}:=\kappa$.
Thus there are at most $\frac{\ln \kappa}{\ln N}$ different unfavorable primes in $[N+1,2N]$ causing the leading coefficient to vanish.
By Lemma \ref{lm-3}, there are at least $\frac{1}{2}N/\ln N$ primes in $[N+1,2N]$.
Therefore, if we randomly choose a prime $p$ in $[N+1,2N]$, the probability of $p$ dividing the leading coefficient of $\Gamma$ is $\leq \frac{\ln \kappa/\ln N}{\frac{1}{2}N/\ln N}=\frac{2\ln \kappa}{N}\leq \frac{(2nd-d)\ln(d+1)+4nd^2+2nd+(2d+2)\ln C}{1/2\cdot N}=\frac16$.
Thus with probability $\geq \frac56$, $\Phi_p(\Gamma)\neq0$. Assume $\Phi_p(\Gamma)\neq0$.  Since $\deg \Gamma\leq 2nd^2+2nd$ and $p>N\geq 20nd^2+20nd$, by Lemma \ref{lm-6}, $\Phi_p(\Gamma(\mathbf{a}))\neq0$ with probability $\geq 1-\frac{\deg \Gamma}{N}\geq 1-\frac{2nd^2+2nd}{20nd^2+20nd}=\frac{9}{10}$.
So the probability that $\Phi_p(H(\Breve{\mathbf{a}},x_{i_0}))\nmid\Phi_p(A(\Breve{\mathbf{a}},x_{i_0}))$ is $\geq \frac{9}{10}\cdot \frac{5}{6}=\frac{3}{4}$.
\qed
\end{proof}

\begin{algorithm}[H]
\caption{Verifying  whether  $H\overset{\text{?}}=\gcd(A,B)$.}\label{alg-4}
\begin{algorithmic}[1]
\REQUIRE  $A,B,H\in \Z[x_1,\dots,x_n]$ with $\LC(H)>0$; a tolerance $\varepsilon$.
\ENSURE If $H\neq \gcd(A,B)$, then it returns ``$H\neq \gcd(A,B)$" with probability $\geq 1-\varepsilon$; if $H=\gcd(A,B)$, then it returns ``$H=\gcd(A,B)$" with probability $\geq 1-\varepsilon$.


\STATE Compute the contents and the primitive parts of $H$ and $A,B$, and denote them by $C_{H}:=\Cont(H),C_A:=\Cont(A),C_B:=\Cont(B),P_{H}:=\Prim(H),P_A:=\Prim(A),P_B:=\Prim(B)$.\label{alg-4-2}\

\IF {$C_{H}\neq \gcd(C_A,C_B)$}\label{alg-4-21}
\RETURN ``$H\neq \gcd(A,B)$".
\ENDIF

\STATE Find the partial degree bound $d$ of $P_A,P_B$ and coefficient bound $C\geq \|P_A\|_\infty,  \|P_B\|_\infty$.\

\STATE Let $N=\max\{ 12
\lceil (2nd-d)\ln(d+1)+4nd^2+2nd+(2d+2)\ln C\rceil,20nd^2+20nd\}$. Let $K:=\lceil \frac{\log_2 \frac{1}{\varepsilon}}{2}\rceil$.\label{alg-4-3}\

\FOR{$\ell=1,2,\dots,K$\label{alg-4-4}}
\STATE Randomly choose a prime $p$ in $[N+1,2N]$ and randomly choose $\mathbf{a}=(a_1,\dots,a_{n})\in \F^{n}_p$.\label{alg-4-6}

\FOR{$i=1,2\dots,n$\label{alg-4-7}}
\IF{$\LC(H)(\Breve{\mathbf{a}},x_i)=0$}
\STATE break and goto Step \ref{alg-4-4}
\ENDIF

\STATE Compute the polynomials $u_i=P_{H}(\Breve{\mathbf{a}},x_i)$, $v_i=P_A(\Breve{\mathbf{a}},x_i)$ and  $w_i=P_B(\Breve{\mathbf{a}},x_i)$.\label{alg-4-8}

\IF{$u_i\nmid v_i$ or $u_i\nmid w_i$\label{alg-4-9}}
\RETURN ``$H\neq \gcd(A,B)$.\label{alg-4-10}\label{alg-4-12}
\ENDIF
\ENDFOR \label{alg-4-131}
\ENDFOR \label{alg-4-132}
\STATE Find the partial degrees $d_i=\deg_{x_i}\gcd(A,B),i=1,\dots,n$ by Algorithm \ref{alg-41} with tolerance $\varepsilon$.\label{alg-4-16}\

\IF{$d_i=\deg_{x_i}H,i=1,2,\dots,n$\label{alg-4-14}}
\RETURN ``$H=\gcd(A,B)$".\label{alg-4-17}
\ELSE \RETURN ``$H\neq\gcd(A,B)$"\label{alg-4-18}
\ENDIF

\end{algorithmic}
\end{algorithm}

\begin{theorem}
Algorithm \ref{alg-4} works correctly as specified.
\end{theorem}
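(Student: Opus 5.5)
We split into the two cases of the output specification and first reduce to primitive parts. Since $\gcd(A,B)=\gcd(C_A,C_B)\cdot\gcd(P_A,P_B)$ with $\gcd(P_A,P_B)$ primitive and, because $\LC(H)>0$, $H=C_H P_H$ with $P_H$ primitive of positive leading coefficient, we get that $H=\gcd(A,B)$ if and only if $C_H=\gcd(C_A,C_B)$ and $P_H=\gcd(P_A,P_B)$. So the test in Step \ref{alg-4-21} is always correct when it rejects. In what follows we may assume $C_H=\gcd(C_A,C_B)$, so that $H=\gcd(A,B)$ is equivalent to $P_H=\gcd(P_A,P_B)$. By Theorem \ref{the-15}, applied with primitive inputs, this in turn is equivalent to the conjunction of $P_H\mid P_A$, $P_H\mid P_B$ and $\deg_{x_i}P_H=\deg_{x_i}\gcd(A,B)$ for all $i$. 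Here we use that $\deg_{x_i}$ of the gcd equals $\deg_{x_i}\gcd(P_A,P_B)$, since contents are constants.

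\textbf{Case $H=\gcd(A,B)$.} Then $P_H$ divides $P_A$ and $P_B$ in $\Z[\X]$. Specializing $x_j=a_j$ for $j\neq i$ and reducing mod $p$ is a ring homomorphism, so $u_i\mid v_i$ and $u_i\mid w_i$ whenever $u_i\neq 0$. The guard on $\LC(H)$ skips the degenerate case where the image of $u_i$ vanishes or drops degree. Hence Step \ref{alg-4-10} never fires, and the loop never produces a false rejection. The only possible error comes from Step \ref{alg-4-16}. By Theorem \ref{the-16}, Algorithm \ref{alg-41} returns the true partial degrees with probability $\geq 1-\varepsilon$. Those degrees coincide with $\deg_{x_i}H$, so the algorithm returns ``$H=\gcd(A,B)$'' with probability $\geq 1-\varepsilon$.

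\textbf{Case $H\neq\gcd(A,B)$.} If the contents disagree we are done deterministically. Otherwise one of two things holds. Either $P_H\nmid P_A$ (or $P_H\nmid P_B$), or both divisibilities hold and some $\deg_{x_i}P_H\neq\deg_{x_i}\gcd(A,B)$.

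In the second sub-case, Theorem \ref{the-16} shows that with probability $\geq 1-\varepsilon$ the computed $d_i$ are the true degrees, so the test at Step \ref{alg-4-14} fails and we output ``$H\neq\gcd(A,B)$''.

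In the first sub-case, say $P_H\nmid P_A$, we apply Theorem \ref{the-4} to the primitive pair $(P_H,P_A)$. This needs the hypothesis $\|P_H\|_\infty,\|P_A\|_\infty\leq e^{nd}C$ with the $d$ and $N$ of Step \ref{alg-4-3}. This is the delicate point. For $P_A$ it is trivial. For $P_H$ it does not follow from anything if $H$ is arbitrary. I would handle it by noting that the loop can only err towards acceptance when $P_H$ divides the relevant specializations, and I would argue one of two ways. The first option is to argue that the bound check is harmless. The cleaner option is to invoke it only when $\deg_{x_i}P_H\le d$ and the coefficient bound holds, which is automatic if we additionally assume $P_H\mid P_A$ over $\Z$ would force Lemma \ref{lm-1}. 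Failing that, we accept the dependence on the hypothesis as implicitly assumed for candidates output by Algorithm \ref{alg-1}.

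Given the hypothesis, each of the $K$ independent rounds detects non-divisibility for some $i_0$ with probability $\geq\frac34$. Rounds aborted by the leading-coefficient check must be accounted for: I would fold them into the failure event of $\Gamma(\mathbf a)\not\equiv 0$ by including $\LC(H)$ in $\Gamma$. All $K$ rounds therefore miss with probability $\leq(\frac14)^K\leq\varepsilon$, exactly as in Theorem \ref{the-16}. I expect this application of Theorem \ref{the-4} to be the main obstacle, namely the coefficient/degree hypothesis on $P_H$ and the interaction with the early ``break''.
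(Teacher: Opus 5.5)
Your route is the paper's. You reject on contents. If $H=\gcd(A,B)$, no round can reject, and the only error comes from Algorithm~\ref{alg-41}, bounded by Theorem~\ref{the-16}. If $H\neq\gcd(A,B)$ with equal contents, either a divisibility fails and Theorem~\ref{the-4} over $K$ rounds gives error at most $(\frac14)^K\le\varepsilon$, or both divisibilities hold, some partial degree differs, and Theorem~\ref{the-16} catches it.

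The paper applies Theorem~\ref{the-4} to $(P_H,P_A)$ after checking only primitivity. So the hypothesis you single out ($d$ bounds $\deg_{x_i}P_H$ and $\|P_H\|_\infty\le e^{nd}C$) is a real hole, and the paper has it too. Your patches do not close it, though. The Lemma~\ref{lm-1} route is circular: that lemma bounds \emph{factors} of $P_A$, and in this sub-case $P_H$ is precisely not one. Treating the hypothesis as implicit proves a weaker statement than the specification, which admits any $H$ with $\LC(H)>0$. Nothing in the paper guarantees the bound even for a wrong output of Algorithm~\ref{alg-1}.

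In fact, as written, Algorithm~\ref{alg-4} does not meet its specification, because $d$, $C$ and hence $N$ in Step~\ref{alg-4-3} depend only on $A,B$. Take $A=x_1(x_2+1)$, $B=x_1(x_2+2)$ and $C=2$, so $N=204$. Let $M$ be the product of all primes in $[N+1,2N]$, and put $H=x_1+M$.
\begin{itemize}
\item $C_H=1=\gcd(C_A,C_B)$, and $\Phi_p(P_H)=x_1$ for every admissible $p$.
\item Hence $u_1=x_1$ divides $v_1,w_1$, and $u_2\equiv a_1$ divides $v_2=a_1(x_2+1)$ and $w_2=a_1(x_2+2)$, so no round rejects.
\item The partial degrees $(1,0)$ of $H$ equal those of $\gcd(A,B)=x_1$.
\end{itemize}
So the algorithm answers ``$H=\gcd(A,B)$'' with probability at least $1-\varepsilon$.

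The fix is a deterministic pre-check: return ``$H\neq\gcd(A,B)$'' if $\|P_H\|_\infty>e^{nd}C$, or if $\deg_{x_i}P_H$ exceeds $\deg_{x_i}P_A$ or $\deg_{x_i}P_B$ for some $i$. This never fires when $H=\gcd(A,B)$, since then $P_H\mid P_A$ and Lemma~\ref{lm-1} applies. After the check, Theorem~\ref{the-4} applies verbatim. Alternatively, let $d$ and $C$ also bound $P_H$, at the price of a larger $N$.

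Your concern about the early break is also valid, and the paper also ignores it. However, folding $\prod_i\LC_{x_i}(P_H)$ into $\Gamma$ raises its total degree by up to $(n-1)^2d$. The bound $N\ge 20nd^2+20nd$ does not absorb this when $n\gg d$. Either enlarge $N$, or skip index $i$ instead of aborting the round. Skipping suffices because $\LC_{x_{i_0}}(P_H)$ is already a factor of the $\Gamma$ supplied by Lemma~\ref{lm-16}.
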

\begin{proof}
If $H=\gcd(A,B)$, then in Step \ref{alg-4-21}, $C_{H}=\gcd(C_A,C_B)$. In Step \ref{alg-4-9}, $u_i|v_i$ and $u_i|w_i$. So if $\deg_{x_i}\gcd(P_A,P_B),i=1,\dots,n$ are computed correctly in Step \ref{alg-4-16}, it returns ``$H=\gcd(A,B)$" in Step \ref{alg-4-17}. By Theorem \ref{the-16}, the probability of this case is $\geq 1-\varepsilon$.

If $H\neq \gcd(A,B)$, there are three cases.
{\bf Case} 1: $C_{H}\neq \gcd(C_A,C_B)$. This can be detected in Step \ref{alg-4-21}.
{\bf Case} 2: $C_{H}=\gcd(C_A,C_B)$, but $P_{H}\nmid P_A$ or $P_{H}\nmid P_A$. We analyze the case $P_{H}\nmid P_A$, and the analysis of case $P_{H}\nmid P_B$ is the same.
In Step \ref{alg-4-2}, $P_{H}$ and $P_A$ are primitive.
For each $\ell$ in Step \ref{alg-4-4}, by Theorem \ref{the-4}, $P_{H}\nmid P_A$ is correctly detected with a probability of $\geq \frac34$. As we have tested $K$ times in total, the probability of correctly testing  $P_{H}\nmid P_A$ is $\geq 1-(\frac14)^K\geq 1-(\frac14)^{\frac{\log \frac{1}{\varepsilon}}{2}}=1-\varepsilon$. So it returns ``$H\neq \gcd(A,B)$" in Step \ref{alg-4-10}, with a probability of $\geq 1-\varepsilon$.
{\bf Case} 3: $C_{H}=\gcd(C_A,C_B)$, and $P_{H}| P_A$ and $P_{H}| P_B$, but $H\neq \gcd(A,B)$. So there exists $i_0\in \{1,\dots,n\}$ so that $\deg_{x_{i_0}}{H}<\deg_{x_{i_0}}\gcd(A,B)$.
If $\deg_{x_{i_0}}\gcd(A,B)$ is correctly computed in Step \ref{alg-4-16}, then ``$H\neq \gcd(A,B)$" is returned in Step \ref{alg-4-18}. By  Theorem \ref{the-16}, the probability of this case is $\geq 1-\varepsilon$.
We proved it.\qed
\end{proof}

\begin{theorem}\label{the-6}
Algorithm \ref{alg-4} costs $O^\sim(n^2dt\log C\log \frac{1}{\varepsilon})$ bit operations, where $t=\#A+\#B+\#G$.
\end{theorem}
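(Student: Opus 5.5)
The plan is to bound each step of Algorithm \ref{alg-4} separately and then sum the costs. Throughout, $\log N$ is only polylogarithmic in $n,d,\log C$, so by Step \ref{alg-4-3} we have $\log p\in O(\log n+\log d+\log\log C)$ and all arithmetic modulo $p$ is soft-constant.

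\textbf{Contents and primitive parts (Step \ref{alg-4-2}).} Computing the content of each of $A,B,H$ takes $O(t)$ integer GCDs on integers of bit size $O(\log C+nd)$; for $H$ we use Lemma \ref{lm-1}. Dividing through costs the same, so this step is $O^\sim(t(nd+\log C))$. Step \ref{alg-4-21} is a single integer GCD. I will treat $\#H\le t$, which holds when $H$ is the candidate of the correct size; otherwise we first check $\#H\le \#G$ bound, or charge it to $t$ by definition.

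\textbf{The main loop (Steps \ref{alg-4-4}--\ref{alg-4-132}).} It runs $K=O(\log\frac1\varepsilon)$ times. Each iteration does the following.
\begin{itemize}
\item It picks a prime $p$ at cost $O^\sim(\log^3 N)$ and a point $\mathbf{a}$ at cost $O(n\log N)$.
\item It reduces the coefficients of $P_H,P_A,P_B$ modulo $p$. This costs $O^\sim(t(\log C+nd))$ per iteration and can be done once per $p$.
\item For each of the $n$ indices $i$, it forms $u_i,v_i,w_i$. Each term needs the product of $n-1$ powers $a_j^{e_j}$ with $e_j\le d$. Naively this is $O(n\log d)$ multiplications modulo $p$ per term, so $O^\sim(nt\log d)$ per $i$ and $O^\sim(n^2t)$ over all $i$.
\item It tests $u_i\mid v_i$ and $u_i\mid w_i$ by univariate division in $\F_p[x_i]$ of degree $\le d$, at cost $O^\sim(d)$ each.
\end{itemize}
The whole loop therefore costs $O^\sim\big((n^2t+nd+t\log C+ndt)\log\frac1\varepsilon\big)$.

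\textbf{Degree computation and comparison (Steps \ref{alg-4-16}--\ref{alg-4-14}).} Step \ref{alg-4-16} costs what Theorem \ref{the-17} gives, which is dominated by $O^\sim(nt\log C\log\frac1\varepsilon+n^2t\log\frac1\varepsilon)$. The final comparison of partial degrees is $O(nt\log d)$.

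Summing, every term is bounded by $n^2dt\log C\log\frac1\varepsilon$ up to polylog factors, using $t\ge1$, $d\ge1$, and $C\ge2$. This is the claimed bound. The only delicate point is the coefficient-size bookkeeping for $H$ in the content step, since $\|H\|_\infty$ is not a priori bounded by the input. I would handle this by bounding it via Lemma \ref{lm-1} when $H\mid A$. Alternatively, I would note that the bit-length of $H$'s coefficients is part of the input size, and the paper's bound absorbs $nd$ into the $n^2d\log C$ factor. I expect this to be the main (mild) obstacle; the rest is routine accounting.
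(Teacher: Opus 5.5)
Your proposal is correct and follows the same route as the paper's proof. Both bound the content and primitive-part step, then the $K=O(\log\frac1\varepsilon)$ rounds of evaluation and univariate division modulo a prime $p$ with $\log p$ polylogarithmic, then the call to Algorithm \ref{alg-41} via Theorem \ref{the-17}, and absorb every term into $n^2dt\log C\log\frac1\varepsilon$. Your intermediate bounds are slightly tighter (you reduce modulo $p$ once per prime and keep $nd$ and $\log C$ additive), and the paper likewise tacitly assumes the caveat you flag, that $\#H\le t$ and $\|H\|_\infty\le e^{nd}C$ as in Theorem \ref{the-4}.
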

\begin{proof}
We analyse the complexity.
In Step \ref{alg-4-2}, $\Cont(H)$ is the GCD of all the coefficients in $H$, so the cost is $O^\sim(tnd\log C)$ bit operations. As $\Prim(H)=H/\Cont(H)$, computing $\Prim(H)$ requires $O^\sim(ndt\log C)$ bit operations.
It also costs $O^\sim(tnd\log C)$ bit operations to compute $\Cont(A)$,$\Cont(B)$ and $\Prim(A)$,\\$\Prim(B)$.
In Step \ref{alg-4-6}, choosing a prime $p$ in $(N,2N]$ requires $O(\log^3N)$ bit operations. Choosing $\mathbf{a}$ costs $O(n\log N)$ bit operations.
In Step \ref{alg-4-8}, computing $u_i,v_i$ and $w_i$ costs $O^\sim(nt\log d\log p+ndt\log C)$
bit operations. In Step \ref{alg-4-9}, to test $u_i|v_i$ and $u_i|w_i$, as $d\geq \deg u_i,\deg v_i,\deg w_i$, by univariate polynomial division, it costs $O^\sim(d\log p)$ bit operations \cite[p23]{Bini1994Polynomial}. Thus the complexity of Steps \ref{alg-4-7}-\ref{alg-4-131} is $O^\sim(n^2t\log d\log p+n^2dt\log C+nd\log p)$ bit operations.
As we do Steps \ref{alg-4-4}-\ref{alg-4-132} at most for $K$ times, the complexity is $O^\sim(n^2tK\log d\log p+n^2dtK\log C+ndK\log p)$ bit operations, that  is, $O^\sim(n^2dt\log C\log \frac{1}{\varepsilon})$ bit operations.
In Step \ref{alg-4-16}, by Theorem \ref{the-17}, the complexity is $O^\sim(nt\log C\log \frac{1}{\varepsilon}+n^2t\log^2 d\log\log C\log\frac{1}{\varepsilon}+nd\log\log C\log\frac{1}{\varepsilon})$ bit operations.
We proved it.
\end{proof}

\section{A GCD algorithm without a given term bound}
Let $G=\gcd(A,B)$.  Algorithm \ref{alg-1} requires a term bound $T$ for $\#G$ as input, but in practice, it is difficult to give a tight upper bound of the output polynomial before we know its exact form. In this section, we will remove this requirement.

We first observe that even if $T < \#G$, Algorithm \ref{alg-1} may still output $G$.
If $H$ is the output of Algorithm \ref{alg-1}, and $H\neq G$, then by Theorem \ref{the-15}, $H\nmid A$ or $H\nmid B$ or $\deg_{x_i}H \ne \deg_{x_i}G$ for some $i\in\{1,\dots,n\}$, conditions we can test.
%
%
%
%
We let $T=2,2^2,\dots,2^{\lceil n\log_2(d+1)\rceil}$, and use $T$ as the input in Algorithm \ref{alg-1}.
If $T$ is an upper bound of the number of terms of $\gcd(A,B)$, then with probability $\geq \frac{3}{4}$, Algorithm \ref{alg-1} returns the correct GCD.
To improve the success rate,  repeat Algorithm \ref{alg-1} $m$ times, and then at least one of the outputs is correct, with probability $1-4^{-m}$.
To test whether the output is correct, we call Algorithm \ref{alg-4}.

\subsection{Algorithm}

\renewcommand{\algorithmicuntil}{\textbf{end repeat}}

\begin{algorithm}[H]
\caption{GCD algorithm for $\Z[x_1,\dots,x_n]$.}\label{alg-5}
\begin{algorithmic}[1]
\REQUIRE Two polynomials $A,B\in \Z[x_1,\dots,x_n]$ and a tolerance $\varepsilon$.
\ENSURE $G=\gcd(A,B)$ with probability $\geq 1-\varepsilon$.

\STATE Compute the monomial contents of $A,B$ and the monomial primitive parts of $A,B$. Denote them $C_A:=\MoCont(A)$ and $C_B:=\MoCont(B)$.  Set $A:=\MoPrim(A)$ and $B:=\MoPrim(B)$. (For the convenience of description, we still denote the monomial primitive parts as $A,B$.)\label{alg-5-1}

\STATE Compute the individual degree bound $d:=\max_{i=1}^n \min(\deg_{x_i}A,\deg_{x_i}B)$.

\STATE Let $\varepsilon'=\min\{\frac14,\varepsilon\}$, $m=\lceil\frac{1}{2}\log_2 \frac{2}{\varepsilon'} \rceil$, $\kappa=\lceil n\log_2(d+1)\rceil$ and $\eta=\frac{\varepsilon'}{2\kappa\cdot m}.$\

\STATE $T:=2$;
\REPEAT \label{alg-5-3}

\FOR{$i=1,2,\dots,m$\label{alg-5-4}}

\STATE Compute the GCD of $A,B$ with the guess terms bound $T$ using Algorithm \ref{alg-1}.\label{alg-5-6}

 \IF{Algorithm \ref{alg-1} returns ``Failure"}
 \STATE goto Step \ref{alg-5-4}
 \ELSE
 \STATE Let $H$ be the output of Algorithm \ref{alg-1}.
 \ENDIF
\STATE Test if $H=\gcd(A,B)$ by using Algorithm \ref{alg-4} with tolerance $\eta$.\label{alg-5-12}
\IF{ Algorithm \ref{alg-4} outputs ``$H=\gcd(A,B)$''}
\RETURN $\gcd(C_A,C_B) \cdot H$.\label{alg-5-13}
\ENDIF
\ENDFOR

\IF{$T<(d+1)^n$}
\STATE $T:=2T$
\ENDIF
\UNTIL \label{alg-5-16}

\end{algorithmic}
\end{algorithm}

\begin{theorem}
\label{th-10}
Algorithm \ref{alg-5} works correctly as specified.
\end{theorem}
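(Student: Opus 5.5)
We plan to show that Algorithm~\ref{alg-5} returns $\gcd(A,B)$ with probability at least $1-\varepsilon$. The first step is the reduction to the monomial primitive case. Step~\ref{alg-5-1} replaces $A,B$ by $\MoPrim(A),\MoPrim(B)$. As noted at the start of Section 3, $\gcd(A,B)=\gcd(\MoCont(A),\MoCont(B))\cdot\gcd(\MoPrim(A),\MoPrim(B))$. Hence it suffices to show that the value $H$ returned in Step~\ref{alg-5-13} equals $\gcd(\MoPrim(A),\MoPrim(B))$ with probability $\ge 1-\varepsilon$. The first factor is a gcd of two monomial terms and is computed exactly.

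Next we bound the failure probability by a union bound over two kinds of bad events. Let $G=\gcd(A,B)$. Since $G\mid A$, its support lies in a box with $\#G\le (d+1)^n$, where $d$ is the individual degree bound computed in Step 2. Therefore the doubling sequence $T=2,4,\dots$ reaches a value $T\ge \#G$ after at most $\kappa=\lceil n\log_2(d+1)\rceil$ rounds, and $T$ then stays there.

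The first kind of bad event is that the verifier accepts a wrong candidate. Each call to Algorithm~\ref{alg-4} uses tolerance $\eta$. By the correctness theorem for Algorithm~\ref{alg-4}, a wrong $H$ is accepted with probability $\le\eta$, and a correct $H$ is rejected with probability $\le\eta$. We restrict attention to the calls made during the first $\kappa$ rounds plus the first round with $T\ge\#G$. There are at most $(\kappa+1)m\le 2\kappa m$ such calls when $\kappa\ge1$; the degenerate case $d=0$ is trivial. The probability that any of these calls errs is therefore at most $2\kappa m\,\eta=\varepsilon'$. Since Algorithm~\ref{alg-4} only ever certifies a candidate, conditioned on no error among these calls the algorithm returns $G$ or nothing before that round ends.

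The second kind of bad event is that no correct candidate is ever produced in the good round. In the first round with $T\ge\#G$, each of the $m$ independent calls to Algorithm~\ref{alg-1} returns $G$ with probability $\ge\frac34$, by the correctness theorem of Algorithm~\ref{alg-1}. All $m$ calls fail with probability $\le 4^{-m}\le \varepsilon'/2$, by the choice $m=\lceil\frac12\log_2\frac2{\varepsilon'}\rceil$. If some call does return $G$, Algorithm~\ref{alg-4} accepts it unless it errs, and that error is already counted in the first bad event.

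Combining the two bounds, the total failure probability is at most $\varepsilon'+\varepsilon'/2$. This is off by a constant from $\varepsilon$, so we would adjust the bookkeeping. Either we count only $\kappa m$ verifier calls, which gives $\kappa m\eta=\varepsilon'/2$, or we note that the first-round error only matters for wrong candidates. Either way we aim for total failure probability $\le \varepsilon'/2+\varepsilon'/2\le\varepsilon$.

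The main obstacle is this accounting for verifier calls, because the loop is unbounded. The goto on ``Failure'' and the repeats after $T$ stops doubling mean the number of calls is a priori infinite. We would handle this by analysing only the prefix of the execution up to the end of the first round with $T\ge\#G$. Before that point, only an erroneous acceptance can cause a wrong output, and after it the algorithm has, with the stated probability, already returned $G$. Later rounds can only increase the success probability, or at worst terminate later with $G$, so they do not need to be counted.
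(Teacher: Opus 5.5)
Your argument is correct and is essentially the paper's. Of your two proposed fixes, the first is the right one: round $j$ uses $T=2^j$ and $2^{\kappa}\ge(d+1)^n\ge\#G$, so the first round with $T\ge\#G$ is already among the first $\kappa$ rounds, and your ``$+1$'' counted it twice. That gives at most $m\kappa$ verifier calls and error at most $m\kappa\eta=\varepsilon'/2$ (the paper's $mL$ with $L=\lceil\log_2\#G\rceil\le\kappa$), plus $4^{-m}\le\varepsilon'/2$ for the $m$ calls to Algorithm~\ref{alg-1} in that round.

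The only difference from the paper is cosmetic. You combine the two bad events by a union bound, whereas the paper writes $P(A1\cap A2)=P(A2)P(A1\mid A2)\ge(1-\varepsilon'/2)^2$. Your version has the small advantage of not needing a conditional estimate.
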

\begin{proof}
 Suppose $t:=\#G$ and set $L:=\lceil \log_2 t\rceil$.
 Then $t\leq 2^{L}< 2t$.
Consider the following two events:
\begin{itemize}
\item[A1:] when $T=2,\dots,2^{L}$, Step \ref{alg-5-12} always tests if
$H=\gcd(A,B)$ or not correctly by using Algorithm \ref{alg-4}.
\item[A2:] when $T=2^{L}$, at least one of outputs of Algorithm \ref{alg-1} in Step \ref{alg-5-6} is correct.
\end{itemize}

Consider the probabilities of the two events. $P(A1) \geq (1-\eta)^{m\cdot L}\geq 1-\eta \cdot m\cdot L\geq 1-\frac{\varepsilon'}{2\cdot\kappa\cdot m}\cdot m\lceil \log_2 t\rceil=1-\frac{\varepsilon'}{2}\cdot \frac{\lceil \log_2 t\rceil}{\lceil n \log_2(d+1)\rceil}.$
Since $t\leq (d+1)^n$, $\lceil \log_2 t\rceil\leq \lceil n\log_2 (d+1)\rceil$. Thus $P(A1)\geq 1-\frac{\varepsilon'}{2}$.
$P(A2)\geq 1-(\frac{1}{4})^{m}\geq 1-(\frac{1}{4})^{\frac12\log_2\frac{2}{\varepsilon'}}=1-\frac{\varepsilon'}{2}$.
So the probability of returning the GCD is $\geq P(A1 \bigcap A2)=P(A2)P(A1|A2)\geq (1-\frac{\varepsilon'}{2})^2\geq
1-\varepsilon'\geq
1-\varepsilon$.
The theorem is proved.
\end{proof}

The following theorem shows the complexity.

\begin{theorem}
\label{th-11}
Let $A,B\in \Z[x_1,\dots,x_n]$ with partial degree bound $d$.
Then Algorithm \ref{alg-5} computes the correct GCD $G=\gcd(A,B)$ using expected $O^\sim(n^2T_{\out}^2d^2$\\$\log^2 \frac{1}{\varepsilon}\log C+n^2dT_{\inp}\log^2 T_{\out}\log^2\frac{1}{\varepsilon}\log C)$ bit operations where $T_{\inp}=\#A+\#B$ and $T_{\out}=\#G$.
\end{theorem}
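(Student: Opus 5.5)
The plan is to bound the expected cost of Algorithm \ref{alg-5} by counting how many calls to Algorithm \ref{alg-1} and Algorithm \ref{alg-4} are made, and then multiplying by the per-call costs from Theorem \ref{the-7} and Theorem \ref{the-6}. Step \ref{alg-5-1} only computes monomial contents and monomial primitive parts. Its cost is $O^\sim(nT_{\inp}\log d+T_{\inp}\log C)$, which is dominated by the rest. The same holds for the final multiplication by $\gcd(C_A,C_B)$, which costs $O^\sim(T_{\out}(n\log d+\log C))$.

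I split the run into two phases. In the first phase $T=2,4,\dots,2^{L}$ with $L=\lceil\log_2 T_{\out}\rceil$. In the second phase $T>2^L$; it is only reached when the good events A1 and A2 from the proof of Theorem \ref{th-10} fail.

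\textbf{First phase.} Each value of $T$ triggers at most $m=O(\log\frac1\varepsilon)$ calls to Algorithm \ref{alg-1} and at most $m$ calls to Algorithm \ref{alg-4}. Every $T$ used here satisfies $T\le 2^L<2T_{\out}$. By Theorem \ref{the-7}, the calls to Algorithm \ref{alg-1} therefore cost
$$\sum_{j\le L} m\, O^\sim\bigl(n^2 4^j d^2\log C+n^2dT_{\inp}\log T\log C\bigr)=O^\sim\bigl(\log\tfrac1\varepsilon\,(n^2T_{\out}^2d^2\log C+n^2dT_{\inp}\log^2T_{\out}\log C)\bigr).$$
The geometric sum $\sum_j 4^j$ is $O(T_{\out}^2)$. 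The extra factor $\log T_{\out}$ in the second term comes from summing $\log T$ over the $L$ values of $T$.

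For Algorithm \ref{alg-4}, the candidate $H$ has at most $T$ terms, since $\Prim(U)$ has at most that many terms after separation. I would check this carefully: otherwise the term count $t$ in Theorem \ref{the-6} could be large. With tolerance $\eta$, where $\log\frac1\eta=O(\log\frac1\varepsilon+\log n+\log\log d)$, Theorem \ref{the-6} gives a cost of $O^\sim(n^2d(T_{\inp}+T)\log C\log\frac1\varepsilon)$ per call. Summed over $mL$ calls this is $O^\sim(n^2d(T_{\inp}+T_{\out})\log T_{\out}\log C\log^2\frac1\varepsilon)$, which is absorbed by the claimed bound. This step, where $\log^2\frac1\varepsilon$ arises as $m\cdot\log\frac1\eta$, is where I expect the exponents on $\log\frac1\varepsilon$ to be fixed.

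\textbf{Second phase, the main obstacle.} Here $T$ can double up to $(d+1)^n$, and the loop may then keep repeating at $T\approx(d+1)^n$. The cost of a single iteration at that level is not polynomial in $T_{\out}$. To control the expectation I would argue as follows. For any $T\ge T_{\out}$, Algorithm \ref{alg-1} is correct with probability $\ge\frac34$, and Algorithm \ref{alg-4} accepts a correct $H$ with probability $\ge1-\eta$. So each round of $m$ trials at level $T$ ends the algorithm with probability at least $1-4^{-m}-\eta\ge\frac12$.

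The probability of ever reaching level $2^{L+k}$ is therefore at most $2^{-k}$, or better $4^{-mk}$. The cost at that level grows like $4^{k}$ times the first-phase cost. This is the crux. With failure probability $4^{-m}\le\frac{\varepsilon'}{2}\le\frac18$ per level, the product $(4\cdot 4^{-m})^k$ still decays, so the expected total is a convergent geometric series times the first-phase cost. If that decay proves too weak, my fallback is to use the sharper bound $(\frac14)^m$ directly on the event of passing each level. Since $m\ge1$, the ratio is $4^{1-m}\le 1$, and I would adjust $m$ by one to make the series converge. Combining the two phases then gives the stated expected bound.
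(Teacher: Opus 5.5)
Your proof is correct and follows the paper's argument. The paper likewise charges the iterations up to $T=2^L$ in full via Theorems \ref{the-7} and \ref{the-6} (with $\log^2\frac1\varepsilon$ coming from $m$ calls at tolerance $\eta$). It handles the later iterations, including the repeats at the cap, by weighting cost growth $4^{\ell-L}$ against $P(\mathrm{E}_\ell)\le(\varepsilon'/2)^{\ell-L}$, which converges because $2\varepsilon'\le\frac12$. That is the same ratio as your $4\cdot4^{-m}\le\frac12$, and since $m\ge 2$ already, your fallback of adjusting $m$ is not needed.

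Two points you gloss over are glossed over identically in the paper and are harmless. First, $\#H\le T$ is not actually enforced by Algorithm \ref{alg-1} on unsuccessful runs; having it return ``Failure'' when $t>T$ fixes this. Second, the per-round probability of failing to stop is at most $4^{-m}+\eta$ rather than $4^{-m}$, which still gives ratio $4(4^{-m}+\eta)\le 3\varepsilon'<1$.
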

\begin{proof}
In Step \ref{alg-5-1}, finding the monomial contents and monomial primitive parts of $A,B$ costs $O^\sim(nT_{\inp}\log d+T_{\inp}\log C)$ bit operations.
Let's first analyze the complexity when $T$ is fixed. In Step \ref{alg-5-6},
the complexity is $C_{T,1}:=O^\sim(n^2T^2d^2\log C\log\frac{1}{\varepsilon}+n^2dT_{\inp}\log T\log C\log\frac{1}{\varepsilon})$ bit operations by Theorem \ref{the-7}. In Step \ref{alg-5-12}, the complexity is $C_{T,2}:=O^\sim(n^2d(T+T_{\inp})\log C\log^2\frac{1}{\varepsilon})$ bit operations by Theorem \ref{the-6}.
 %
%
Now let $T=2,2^2,\dots,2^{\kappa-1},2^{\kappa},2^{\kappa},2^{\kappa},\dots$. We keep doing Steps \ref{alg-5-3}-\ref{alg-5-16} until Step \ref{alg-5-13}
outputs a polynomial. However, when $T$ reaches $2^{\kappa}$, it will not be doubled next time since  $\#G\leq (d+1)^n\leq 2^{\kappa}$.
When $T\geq \#G$, in Step \ref{alg-5-6}, at least one of the $m$ outputs of Algorithm \ref{alg-1} is correct with probability $\geq1-\frac{\varepsilon'}{2}$. Set the event
$\mathrm{E}_{\ell}:=\{ {\rm when\ Step\ }$ \ref{alg-5-3}-\ref{alg-5-16} ${\rm run\ to\  the\ } \ell {\rm th\  time,\ the\ algorithm\ stops}\}$,
where $\ell\geq L$.
When event $\mathrm{E}_{\ell}$ occurs, it means that in the $L,L+1,\dots,(\ell-1)$-th loops of Step \ref{alg-5-3}-\ref{alg-5-16}, Step \ref{alg-5-6} computed the wrong GCD or Step \ref{alg-5-12} reached the wrong conclusion. So the probability $P(\mathrm{E}_{\ell})\leq
(\frac{\varepsilon'}{2})^{\ell-L}$. In this case,
$C_{\ell}:=\sum_{i=1}^{\ell}(C_{2^i,1}+C_{2^i,2})\in O^\sim(n^2(2^{\ell})^2d^2\log C\log\frac{1}{\varepsilon}+n^2dT_{\inp}{\ell}^2\log C\log\frac{1}{\varepsilon}+n^2d(2^{\ell}+\ell T_{\inp})\log C\log^2\frac{1}{\varepsilon})$ is the bit complexity.

So the expected complexity is
$
\leq
\sum_{\ell=L}^{\infty}P(\mathrm{E}_\ell) C_{\ell}+C_{L}
$.
%
%
As $\Lambda=1,2$ and
$\varepsilon' 2^{\Lambda-1}\leq \frac12$,  $\sum_{\ell=L}^{\infty} P(\mathrm{E}_{\ell}) (2^{\ell})^{\Lambda}\leq \sum_{\ell=L}^{\infty}(\frac{\varepsilon'}{2})^{\ell-L} (2^{\ell})^{\Lambda} \leq (2^{\Lambda})^{L}\sum_{i=0}^{\infty}(\frac{\varepsilon'}{2})^i(2^{\Lambda})^i\leq T_{\out}^{\Lambda}\frac{1}{1-\varepsilon' 2^{\Lambda-1}}$, the expected bit complexity is
$O^\sim(n^2T_{\out}^2d^2\log C\log\frac{1}{\varepsilon}+n^2d(T_{\out}+T_{\inp}\log^2 T_{\out})\log C\log^2\frac{1}{\varepsilon})$,
that is, $O^\sim(n^2T_{\out}^2d^2\log^2 \frac{1}{\varepsilon}\log C+n^2dT_{\inp}\log^2 T_{\out}\log C\\ \log^2\frac{1}{\varepsilon})$ bit operations.
\end{proof}

\section{Experimental results}\label{sec-exp}
In this section, we compare our algorithm with the default algorithm in Maple 2021.
%
The data are collected on a Windows desktop computer with a
3.00GHz Core i7 processor and 8GB RAM memory.
The codes can be found in
https://github.com/huangqiaolong/Maple-codes-GCD-integers-coeffs.

We have implemented Algorithm \ref{alg-5} that calls Algorithm \ref{alg-1}. In Step \ref{alg-1-5} of Algorithm  \ref{alg-1}, we use the small primes modular GCD in $\Z[y]$ instead of Algorithm \ref{alg-3} as it is faster than the Hensel lifting method.
%
In Step \ref{alg-1-4} of  Algorithm \ref{alg-1}, we find that the ``bad" points in Lemma \ref{the-13} rarely appear.
In our implementation, we randomly choose different primes $p_i$ of sizes $O(nT\log C)$, which is already big enough such that $p_i$ does not divide the coefficients of $G=\gcd(A,B)$ with high probability, according to our experiments. In case some coefficients of $G$ contain certain $p_i$ or $G_{(\mathbf{s},y)}$ is not separate w.r.t $y$ after the substitution, the algorithm returns ``Failure" and we
will repeat the algorithm with double $T$ (When $T>(d+1)^n$, stop increasing) until a correct GCD is obtained. The procedure will terminate with high probability since $T$ is continuously increasing (until $T>(d+1)^n$), but this will increase the computing time which is shown
as the peaks of the black curves in Figures \ref{figrn}, \ref{figrd}, \ref{figrt}.

We compare with Maple's GCD algorithm
(command {\tt gcd}$(f,g)$). The default algorithm for Maple 2021 uses Zippel's algorithm. The implementation is described in \cite{KleineMW05} and is called the LINZIP algorithm.
%
%
We use the Maple command {\tt randpoly}
to construct five groups of random polynomials $f_1,f_2,f_3\in\Z[x_1,\dots,x_n]$.  Let $A:=f_1f_2$ and $B:=f_1f_3$. We compute the GCD of $A$ and $B$ through the Maple algorithm and our new algorithm, respectively, and record the average computing times. The black line is the time of our algorithm, and the red line is the time of the Maple code.  We compare three benchmarks. Each benchmark changes only one parameter, and the others remain unchanged.

\begin{figure}[!hptb]
\begin{minipage}[t]{0.45\linewidth}
\centering
\includegraphics[scale=0.30]{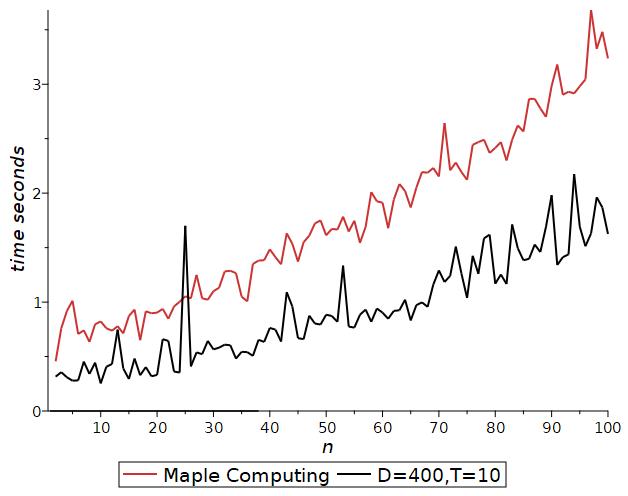}
\caption{Average running time with varying number of variables}
\label{figrn}
\end{minipage}
\hskip5pt
\begin{minipage}[t]{0.45\linewidth}
\centering
\includegraphics[scale=0.3]{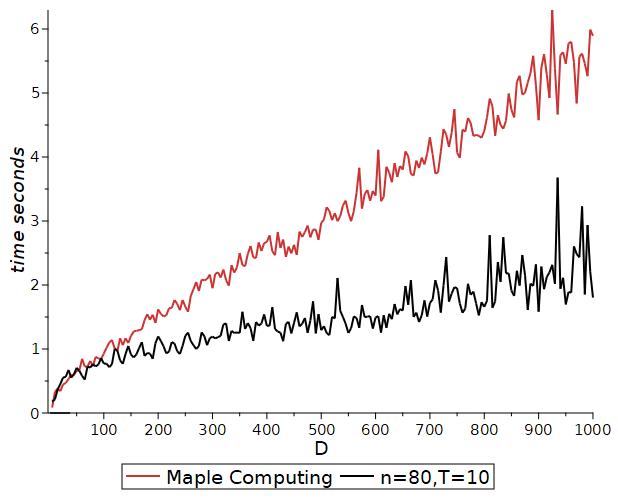}
\caption{Average running time with varying degree} \label{figrd}
\end{minipage}
\end{figure}

\begin{figure}[!hptb]
\begin{minipage}[t]{0.52\linewidth}
\centering
\includegraphics[scale=0.30]{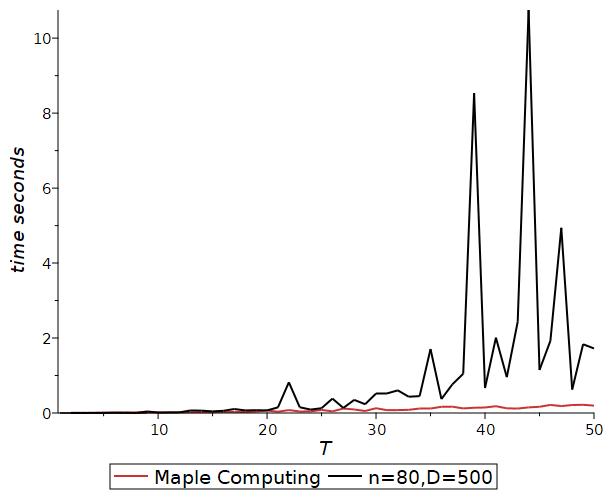}
\caption{Average running time with varying terms}\label{figrt}
\end{minipage}
\end{figure}

{\bf Benchmark 1.}
Generate $f_1,f_2,f_3\in\Z[x_1,\dots,x_n]$
satisfying $\deg f_1=\deg f_2=\deg f_3=400$ and $\#f_1=\#f_2=\#f_3=10$.
Change $n$ from $2$ to $100$. Let $A:=f_1f_2$ and $B:=f_1f_3$.  The average computing times are shown in Figure \ref{figrn}.
From the figure, we can see that the time of our algorithm increases more slowly.

{\bf Benchmark 2.}
Generate  $f_1,f_2,f_3$ satisfying $\#(f_1)=\#(f_2)=\#(f_3)=10$
and $n=80$. Change $D=\deg f_1=\deg f_2=\deg f_3$ from $5$ to $1000$ in step of $5$.
The computing times for $A:=f_1f_2$ and $B:=f_1f_3$ are shown in Figure \ref{figrd}.
It can be seen that our algorithm is faster than Maple's method in most cases.
%
%

{\bf Benchmark 3.}
Generate $f_1,f_2,f_3$ satisfying  $\deg f_1=\deg f_2=\deg f_3=20$ and $n=60$. Change $T=\#(f_1)=\#(f_2)=\#(f_3)$ from $2$ to $50$.
The computing times are shown in Figure \ref{figrt}.
From this figure, we can see that our algorithm runs slower than Maple's algorithm. This is because our GCD algorithm is reduced to the computation of a univariate GCD with degree $O(D T^2)$.  
The cost of Maple's code for a modular $\gcd(A,B)$ in $\F_p[x]$ is $O(\deg(A) \deg(B))$. Maple does not use the fast Euclidean algorithm. Based on this complexity, our cost is approximately $O(D^2T^4)$, which is unfavorable for large $T$.
%

\section{Conclusion and future work}
In this paper, we proposed a new method for computing the GCD of sparse multivariate polynomials  with integer coefficients. 
We map the multivariate polynomials into univariate ones which keep the sparse structure and then recover the target multivariate GCD via factorizations of coefficients.
We also give the explicit bit complexity for the algorithm, which is polynomial in the sparse representation and the partial degree $d$. Our algorithm is better than the previous algorithms in $n$ or $d$.
The algorithm is shown to be faster than the default Maple GCD code if the number of terms in GCD is small.

According to Benchmark 3, our algorithm does not work well when $T$ is large. This is because we should compute a univariate GCD with high degree $O(DT^2)$ and large coefficients. Next, we will look for more effective univariate GCD algorithms for large degrees and coefficients.

\section*{Acknowledgements}
This work was supported by the National Natural Science Foundation of China (Grant No.12571552) and
 NSERC of Canada research grant RGPIN-2019-04441.
\bibliographystyle{abbrv}
\bibliography{mybibfile}

\end{document}